\numberwithin{equation}{section}
\newtheorem{lemma}{Lemma}[section]
\newtheorem{theorem}[lemma]{Theorem}
\newtheorem{proposition}[lemma]{Proposition}
\newtheorem{definition}[lemma]{Definition}
\newtheorem{corollary}[lemma]{Corollary}
\newtheorem{notation}[lemma]{Notation}
\theoremstyle{definition}
\newtheorem{remark}[lemma]{Remark}
\newcommand{\Gal}{\mathrm{Gal}}
\newcommand{\Hol}{\mathrm{Hol}}
\newcommand{\Aut}{\mathrm{Aut}}
\newcommand{\Aff}{\mathrm{Aff}}
\newcommand{\GL}{\mathrm{GL}}
\newcommand{\F}{\mathbb{F}}
\newcommand{\Z}{\mathbb{Z}}
\newcommand{\NN}{\mathcal{N}}
\newcommand{\ord}{\mathrm{ord}}
\newcommand{\id}{\mathrm{id}}
\newcommand{\conj}{\mathrm{conj}}
\newcommand{\opp}{\mathrm{op}}
\newcommand{\tkap}{\widetilde{\kappa}}
\newcommand{\Mat}{\mathrm{Mat}}
\newcommand{\Autsb}{\mathrm{Aut}_{\mathrm{sb}}}
\begin{document}
\title{On a family of simple skew braces}

\author{Nigel P.~Byott}
\address{Department of Mathematics \& Statistics, University of Exeter, Exeter 
EX4 4QF, U.K.}  
\email{N.P.Byott@exeter.ac.uk}

\thanks{This work was supported by the Engineering and Physical Sciences Research Council [grant number EP/V005995/1]. \newline 
	\indent The author thanks Andrea Caranti, Andrew Darlington, Henri Johnston and Paul Truman for comments on earlier versions of this work. \newline
	\indent
	For the purpose of open access, the author has applied a `Creative Commons Attribution (CC BY) licence  to any Author Accepted Manuscript version arising from this submission'. \newline 
	\indent
	Data Access Statement: No data was used for the research described in this article.}

\date{\today}
\subjclass[2020]{16T25, 20N99, 20D10}
\keywords{Skew braces, simple braces, Yang-Baxter equation, set-theoretic solutions}

\bibliographystyle{amsalpha}

\begin{abstract}
	
Several constructions have been given for families of simple braces, but few examples are known of simple skew braces which are not braces. In this paper, we exhibit the first example of an infinite family of simple skew braces which are not braces and which do not arise from nonabelian simple groups. More precisely, we show that, for any primes $p$, $q$ such that $q$ divides ${(p^p-1)}/{(p-1)}$, there are exactly two simple skew braces (up to isomorphism) of order $p^p q$. 
\end{abstract}

\maketitle

\section{Introduction}

Braces were introduced by Rump \cite{Rump} in order to study nondegenerate involutive set-theoretic solutions of the Yang-Baxter equation. They were later generalised by Guarnieri and Vendramin \cite{GV}, who defined skew braces and showed that these give rise to set-theoretic solutions of the Yang-Baxter equation which are nondegenerate but not necessarily involutive. (Skew) braces have been investigated intensively, and connections have been found to many other topics in algebra including Hopf-Galois theory \cite{SV}, semigroups of $I$-type \cite{GI-vdB}, triangular semisimple Hopf algebras \cite{EG} and racks \cite{AG}.

A (left) skew brace $(B,\cdot, \circ)$ consists of a set $B$ with two binary operations such that $(B,\cdot)$ and $(B,\circ)$ are both groups, and a certain compatibility condition holds (see Definition \ref{skewbrace} below). We will usually write $B$ in place of $(B,\cdot,\circ)$. The order of $B$ is the cardinality $|B|$ of the underlying set. We shall refer to $(B,\cdot)$ as the additive group of $B$, and to $(B,\circ)$ as the multiplicative group. 
The skew brace $B$ is a brace if $(B,\cdot)$ is abelian.

There is a natural construction of quotient (skew) braces, and one can therefore consider simple (skew) braces. In the case of {\em braces}, Bachiller \cite{Bach-simple} showed how to obtain an infinite family of simple braces by a matched product construction. This approach has been generalised in several ways \cite{BCJO-iterated, BCJO-asym, CJO-simple} so that many examples of simple braces are now known. Indeed, every finite abelian group occurs as a subgroup of the additive group of a simple brace \cite{CJO-fabsim}. For a prime $p$, the only simple brace of $p$-power order is the unique brace of order $p$ \cite[Corollary to Proposition 8]{Rump}. Smoktunowicz \cite[Theorem 5]{Smok18} has given conditions on the prime factorisation of a natural number $n$ which are necessary for the existence of a simple brace of order $n$. Thus there are many values of $n$ for which it is known that no simple brace of order $n$ exists. However, apart from these negative results, there do not seem to be any composite $n$ for which a classification of all simple braces of order $n$ is known. 

In this paper, we will be concerned with simple {\em skew braces}. Of course, any simple brace is also a simple skew brace. Moreover, there are examples of simple skew braces which arise from nonabelian simple groups (see \S\ref{nonabsim} below). As with simple braces, there are no simple skew braces of order $p^r$ for $p$ prime and $r>1$ \cite[Corollary 4.9]{CSV}. This leaves open the possibility of finite simple skew braces which are not braces, but whose additive and multiplicative groups are both soluble. Indeed, such simple skew braces do exist. Konovalov and Vendramin constructed a database of skew braces of order $n$ for $n \leq 63$, $n \neq 32$, $48$, $54$, and this contains exactly two skew braces with soluble but nonabelian additive and multiplicative groups. These both have order $12$ \cite[Proposition 4.4]{KSV}.

The results of this paper can be summarised as follows. For each natural number $m$, we write $C_m$ for the cyclic group of order $m$.

\begin{theorem} \label{main}
Let $p$ and $q$ be primes such that $q$ divides $(p^p-1)/(p-1)$, and let $n=p^p q$. Then there are, up to isomorphism, exactly two simple skew braces $B$ of order $n$. These are mutually opposite. They have 
additive group 
$(B,\cdot) \cong V \rtimes C_q$
where $V \cong C_p^p$ is an elementary abelian group of order $p^p$ with $C_q$ acting nontrivially on $V$,
and they have multiplicative group 
$(B,\circ) \cong C_q \rtimes P$
where $P$ is a certain group of order $p^p$ acting on $C_q$ via an automorphism of order $p$. The group $\Autsb(B)$ of skew brace automorphisms of $B$ is cyclic of order $p$.
\end{theorem}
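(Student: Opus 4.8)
The plan is to reformulate the problem in terms of regular subgroups of a holomorph, which is the standard way to parametrise skew braces with a given additive group. Specifically, skew braces with additive group $(B,\cdot) \cong G$ correspond to conjugacy classes (under $\Aut(G)$) of regular subgroups of the holomorph $\Hol(G) = G \rtimes \Aut(G)$, and the skew brace is simple precisely when the corresponding regular subgroup $N$ together with $\rho(G)$ (the image of the right regular representation) generates $\Hol(G)$ in a way that leaves no proper nontrivial ideal — equivalently, there is no proper nontrivial subgroup of $B$ that is both $\cdot$- and $\circ$-normal and closed under the relevant action. So first I would set $G = V \rtimes C_q$ with $V \cong C_p^p$ and the $C_q$-action as specified (note $q \mid (p^p-1)/(p-1)$ forces $q \nmid p-1$, so $C_q$ acts irreducibly on $V$, making $V$ the unique minimal normal subgroup of $G$ and $G$ has trivial centre). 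I would compute $\Aut(G)$ and $\Hol(G)$ explicitly: since $V$ is characteristic and the $C_q$-action is irreducible, $\Aut(G)$ will be controlled by the normaliser of $\langle$action$\rangle$ in $\GL_p(\F_p)$, i.e. essentially $\GL_1(\F_{p^p}) \rtimes \Gal(\F_{p^p}/\F_p)$ acting on $V$, together with the inner automorphisms.

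Next I would hunt for regular subgroups $N$ of $\Hol(G)$ whose corresponding skew brace is simple. The key structural input is that the multiplicative group must be $C_q \rtimes P$ with $P$ of order $p^p$; so $N$ has a normal Sylow $q$-subgroup? No — rather $P$ is the Sylow $p$-subgroup and it must be normal in $N$ for $N$ to have the stated structure, with $C_q$ acting on it... wait, $C_q \rtimes P$ has $P$ normal. So I would look for regular subgroups $N \cong C_q \rtimes P$. The group $P$ of order $p^p$ is forced: it should be the nonabelian group that arises naturally, perhaps $C_p \wr C_p$ or the Sylow $p$-subgroup of $\GL_p(\F_p)$ — I would pin it down by requiring $P$ to admit an automorphism of order $p$ under which $C_q$ acts, and by a counting/structural argument inside $\Hol(G)$. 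Then I would verify regularity (the action on $G$ is sharply transitive) and simplicity of the resulting skew brace by checking that any $\cdot$-and-$\circ$-characteristic substructure is trivial, using irreducibility of $V$ under $C_q$ on the additive side and the structure of $P$ on the multiplicative side. Finally I would argue uniqueness: any simple skew brace of order $n = p^p q$ must have additive group with a normal Sylow $p$-subgroup (by Sylow counting, since $q \mid (p^p-1)/(p-1)$ the number of Sylow $q$-subgroups considerations and solubility force this), this Sylow $p$-subgroup must be elementary abelian and irreducible under $C_q$ (otherwise a proper characteristic subgroup gives an ideal), pinning the additive group to $V \rtimes C_q$; then classify regular subgroups of this specific $\Hol(G)$ up to $\Aut(G)$-conjugacy and show exactly two survive the simplicity test, and that these two are opposite skew braces. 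The automorphism group computation $\Autsb(B) \cong C_p$ then follows from identifying $\Aut(N) \cap \Aut(\rho(G))$ inside $\Hol(G)$, which will be generated by a single element of order $p$ coming from the Galois/Frobenius symmetry.

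The main obstacle I expect is the uniqueness half — showing that \emph{no other} regular subgroup of $\Hol(V \rtimes C_q)$ yields a simple skew brace, and that the additive group is forced to be exactly $V \rtimes C_q$ rather than, say, $C_{p^p} \rtimes C_q$ or some other soluble group of order $p^p q$. This requires a careful case analysis over possible additive groups $G$ of order $p^p q$ (using that $G$ must be soluble, have no proper nontrivial characteristic-in-both-structures subgroup, and that its Sylow subgroups are constrained), combined with a somewhat delicate enumeration of regular subgroups in each candidate $\Hol(G)$. The bifurcation into exactly \emph{two} (rather than one or more) skew braces, and the fact that they are mutually opposite, is the subtle payoff: I would expect it to come from a $\Gal(\F_{p^p}/\F_p) = C_p$-worth of regular subgroups that collapses to two orbits under $\Aut(G)$, or from a choice of sign/orientation in how $P$ embeds, with the opposite skew brace construction (swapping the roles via $x \mapsto x^{-1}$ on $(B,\circ)$) interchanging the two. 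Verifying that these two are genuinely non-isomorphic (not just a priori distinct) while being opposite is where I would be most careful, likely by exhibiting an isomorphism invariant distinguishing them or by a direct argument that no skew brace isomorphism carries one to the other.
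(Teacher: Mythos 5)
Your plan follows essentially the same route as the paper: translate to regular subgroups of $\Hol(V\rtimes C_q)$, pin down the additive group via a Frattini/irreducibility argument (the Sylow $p$-subgroup must be elementary abelian and $C_q$-irreducible), classify the regular subgroups with structure $C_q\rtimes P$ up to $\Aut$-conjugacy by first normalising the order-$q$ generator (finding exactly two surviving classes, interchanged by the opposite construction), and extract $\Autsb(B)\cong C_p$ from the Frobenius symmetry of $\F_{p^p}/\F_p$. The only slip is your tentative guess for $P$ --- it is neither $C_p\wr C_p$ nor a full Sylow $p$-subgroup of $\GL_p(\F_p)$ (both have the wrong order), but a specific group of order $p^p$ of nilpotency class $p-1$ and exponent $p^2$, which your own method would identify.
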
 

The first few cases are $n=12$ with $(p,q)=(2,3)$; $n=351$ with $(p,q)=(3,13)$; $n =  34\, 374$ with $(p,q)=(5,11)$; and $n=221\, 875$ with $(p,q)=(5,71)$.

Theorem \ref{main} on the one hand shows the existence of an infinite family of pairs of simple skew braces, the first member of which is the pair of order $12$ found by Konovalov and Vendramin, and on the other hand (and in contrast to what is known for simple braces) gives a complete classification of simple skew braces whose order $n$ is of the special form considered.  

The paper is organised as follows. 

In \S2, we recall some preliminary definitions and results. This includes the description of a skew brace $(B,\cdot,\circ)$ in terms of a regular subgroup of the holomorph $\Hol(N,\cdot)=N \rtimes \Aut(N,\cdot)$ of its additive group. It is this description which underlies the link between skew braces and Hopf-Galois theory, and it will be essential to our approach in this paper. We also explain how the opposite of a skew brace can be identified in this description (Lemma \ref{op-lemma}), and we briefly review known results about simple skew braces arising from nonabelian simple groups (\S\ref{nonabsim}). 

In \S3, we construct a simple skew brace of order $n=p^p q$ as in Theorem \ref{main} by specifying the group $N$ of order $n$ which will be its additive group and then exhibiting a specific regular subgroup $G$ in $\Hol(N)$. We show in Theorem \ref{simple} that the skew brace $B$ corresponding to this group $G$ is simple. The techniques we use for calculating in $\Hol(N)$ have been developed in the context of Hopf-Galois theory, see for example \cite{pq, BC}. We also give an explicit description of a regular subgroup $G^*$ in $\Hol(N)$ corresponding to the opposite skew brace $B^\opp$. 

In \S4, we show (Theorem \ref{no-isom}) that the simple skew braces $B$ and $B^\opp$ are not isomorphic, so that we have indeed found two distinct simple skew braces of order $n$.

The method of proof of Theorem \ref{no-isom} is modified in \S5 to describe $\Autsb(B)$ in Theorem \ref{automs}. 

Finally, in \S6, we show in Theorem \ref{uniqueness} that every simple skew brace of order $n$ must be isomorphic to $B$ or $B^\opp$. Theorems \ref{simple}, \ref{no-isom}, \ref{automs} and \ref{uniqueness} together give all the assertions of Theorem \ref{main}.  

\newpage

\section{Preliminaries}

\subsection{Basic definitions}

\begin{definition} \label{skewbrace} 
A skew brace $(B,\cdot,\circ)$ consists of a set $B$ and two binary operations such that $(B, \cdot)$, $(B, \circ)$ are groups and
the brace relation
$$ a \circ (b \cdot c) = (a \circ b) \cdot a^{-1}  \cdot (a\circ c) 
\mbox{ for } a, b, c \in B $$
holds, where $a^{-1}$ denotes the inverse of $a$ in $(B,\cdot)$. We will usually just write $B$ instead of $(B,\cdot,\circ)$.

A brace is a skew brace $B$ in which $(B, \cdot)$ is abelian.

A homomorphism of skew braces $\theta:(B,\cdot,\circ) \to (B',\cdot',\circ')$ is a function $B \to B'$ with $\theta(a \cdot b) = \theta(a) \cdot' \theta(b)$ and $\theta(a \circ b) = \theta(a) \circ' \theta(b)$ for all $a$, $b \in B$. We write $\Autsb(B)$ for the group of skew brace automorphisms of $B$.
\end{definition}

If $B$ is a skew brace, there is a homomorphism of groups $\lambda: (B,\circ) \to \Aut(B,\cdot)$ given by
$$ a \mapsto \lambda_a \mbox{ where } \lambda_a(b) = a^{-1} \cdot (a \circ b). $$

\begin{definition} \label{def-left-ideal}
A left ideal in a skew brace $B$ is a subgroup $I$ of $(B,\cdot)$ such that
$\lambda_a(I) \subseteq I$ for all $a \in B$. It is then also a subgroup of $(B,\circ)$. 
An ideal in $B$ is a left ideal which is a normal subgroup in both $(B,\cdot)$ and $(B,\circ)$.
\end{definition}

\begin{remark} \label{char}
If $I$ is a characteristic subgroup in $(B,\cdot)$, it is automatically a left ideal which is normal in $(B,\cdot)$. If it is also normal in $(B,\circ)$ then it is an ideal. 
\end{remark}

If $I$ is an ideal in a skew brace $B$, we can form the quotient skew brace $B/I$. 

\begin{definition}
A skew brace $B$ is simple if $|B|>1$ and $B$ has no ideals except $\{1\}$ and $B$. 
\end{definition}

\subsection{Connection with the holomorph} \label{Hol-cor}

We define the holomorph of a group $N$ as the semidirect product
$$ \Hol(N) = N \rtimes \Aut(N) , $$
and we view this as a group of permutations of (the underlying set of) $N$. Thus if $(\eta,\alpha) \in \Hol(N)$ with $\eta \in N$, $\alpha \in \Aut(N)$, then we have 
$$ (\eta, \alpha) \mu = \eta  \alpha(\mu) \mbox{ for } \mu \in N, $$
so that, in particular, $(\eta,\alpha)1_N=\eta$, where $1_N$ is the identity element of $N$. 
Multiplication in $\Hol(N)$ is given by
$$ (\eta,\alpha) (\mu, \beta) = (\eta \alpha(\mu), \alpha \beta). $$
A subgroup $G$ of $\Hol(N)$ is regular if, given any $\eta$, $\mu \in N$, there is a unique $g \in G$ with $g \eta = \mu$. 

For a skew brace $(B,\cdot, \circ)$, the maps $\lambda_a$ allow us to embed $(B,\circ)$ into $\Hol(B,\cdot)$ by
$$ b \mapsto (b, \lambda_b). $$
The image of this embedding is a regular subgroup $G \cong (B,\circ)$ of $\Hol(B,\cdot)$. Conversely, let $(N,\cdot)$ be a group and let $G$ be a regular subgroup of $\Hol(N)$. Then, for each $\eta \in N$, there is a unique element $g_\eta=(\eta, \alpha_\eta) \in G$ whose first component is $\eta$, and we have a skew brace $(N,\cdot, \circ)$ where $\circ$ is defined by 
\begin{equation}  \label{def-circ}
	g_{\eta \circ \eta'} = g_\eta g_{\eta'}. 
\end{equation}
Two regular subgroups $G$, $G'$ in $\Hol(N)$ give isomorphic skew braces if and only if they are conjugate in $\Hol(N)$ via an element of $\Aut(N)$ \cite[Proposition 4.3]{GV}. 

\subsection{Opposite skew braces}  \label{opposite}

\begin{definition} \cite{KT}
	If $(B, \cdot, \circ)$ is a skew brace, then its opposite skew brace $B^\opp$ is $(B, \cdot^\opp,\circ)$ where $a \cdot^\opp b = b \cdot a$. 
\end{definition}

It is easy to see that $\Autsb(B^\opp)=\Autsb(B)$.

\begin{lemma} \label{op-lemma}
Let $N$ be a finite group, and let $G=\{g_\eta: \eta \in N\}$ be a regular subgroup of $\Hol(N)$, where $g_\eta=(\eta, \alpha_\eta)$. Define 
$$ g^*_\eta = (\eta^{-1}, \conj(\eta) \alpha_\eta )\in \Hol(N) $$
where $\conj(\eta)$ is the inner automorphism  of $N$ given by conjugation by $\eta$. Let $G^*=\{g_\eta^* : \eta \in N\}$. Then $G^*$ is a regular subgroup of $\Hol(N)$, and the skew brace determined by $G^*$ is the opposite skew brace to that determined by $G$. Moreover $G$ and $G^*$ are isomorphic via $g_\eta \mapsto g^*_\eta$.
\end{lemma}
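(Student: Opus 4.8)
\textbf{Proof proposal for Lemma \ref{op-lemma}.}

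The plan is to verify directly the three assertions: that $G^*$ is a subgroup of $\Hol(N)$ isomorphic to $G$ via the stated map, that it is regular, and that the skew brace it determines is the opposite of the one determined by $G$. I would begin with the multiplicative structure. Using the multiplication rule $(\eta,\alpha)(\mu,\beta)=(\eta\alpha(\mu),\alpha\beta)$ in $\Hol(N)$ together with the definition $g_{\eta}g_{\eta'}=g_{\eta\circ\eta'}$, one reads off that $\alpha_\eta(\eta')\cdot$ (something) recovers $\eta\circ\eta'$ in the first coordinate and $\alpha_\eta\alpha_{\eta'}=\alpha_{\eta\circ\eta'}$ in the second. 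I would then compute $g^*_\eta g^*_{\eta'}$ explicitly: the first coordinate is $\eta^{-1}\cdot(\conj(\eta)\alpha_\eta)((\eta')^{-1})=\eta^{-1}\cdot\eta\,\alpha_\eta((\eta')^{-1})\,\eta^{-1}=\alpha_\eta((\eta')^{-1})\eta^{-1}$, which should equal $(\eta\circ\eta')^{-1}$ up to the bookkeeping coming from $\circ$; the second coordinate is $\conj(\eta)\alpha_\eta\conj(\eta')\alpha_{\eta'}$, which I would massage using $\alpha_\eta\conj(\eta')=\conj(\alpha_\eta(\eta'))\alpha_\eta$ into the form $\conj(\eta\circ\eta')\alpha_{\eta\circ\eta'}$. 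The upshot is that $g^*_\eta g^*_{\eta'}=g^*_{\eta\circ\eta'}$, which simultaneously shows $G^*$ is closed under multiplication (hence a subgroup, as $N$ is finite) and that $g_\eta\mapsto g^*_\eta$ is an isomorphism $G\to G^*$.

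Next I would check regularity. Since $g^*_\eta$ sends $1_N$ to $\eta^{-1}$, and $\eta\mapsto\eta^{-1}$ is a bijection of $N$, the elements of $G^*$ take $1_N$ to every element of $N$ exactly once; as $|G^*|=|N|$, this is precisely the statement that $G^*$ is regular. Equivalently one invokes the standard fact that a subgroup of $\Hol(N)$ of order $|N|$ is regular iff it acts transitively, or iff the first-coordinate map is a bijection onto $N$.

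Finally, for the identification with the opposite skew brace, I would use the recipe in \S\ref{Hol-cor}: the skew brace attached to $G^*$ has underlying set $N$, additive group $(N,\cdot)$, and multiplication $\circ^*$ determined by $g^*_{\eta\circ^*\eta'}=g^*_\eta g^*_{\eta'}$. But we have just shown $g^*_\eta g^*_{\eta'}=g^*_{\eta\circ\eta'}$, and since $\eta\mapsto g^*_\eta$ is a bijection this forces $\eta\circ^*\eta'=\eta\circ\eta'$, i.e.\ the multiplicative operation is unchanged. It remains to see that the additive operation gets replaced by its opposite. Here the point is a subtlety of the correspondence: the regular subgroup $G^*$ of $\Hol(N,\cdot)$ is being used to build a skew brace whose additive group is $(N,\cdot)$, but the first coordinate of $g^*_\eta$ is $\eta^{-1}$, not $\eta$. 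So the element of $N$ that the construction labels ``$\eta$'' via the rule ``unique element of $G^*$ with first coordinate $\eta$'' is actually $g^*_{\eta^{-1}}$. Re-indexing by $\xi=\eta^{-1}$ and tracking how $\circ$ and $\cdot$ transform under the substitution $\eta\leftrightarrow\eta^{-1}$, one finds the resulting brace is $(N,\cdot^\opp,\circ)$, matching the definition of $B^\opp$. I expect this last re-indexing/bookkeeping step — keeping straight which group element plays which role once the inversion is folded in — to be the only real subtlety; the rest is routine manipulation in $\Hol(N)$. A clean alternative, which I might adopt instead to sidestep the bookkeeping, is to quote the known fact (e.g.\ from \cite{KT} or \cite{KSV}) that for a skew brace with associated regular subgroup $G=\{(\eta,\lambda_\eta)\}$, the opposite skew brace corresponds to the regular subgroup $\{(\eta^{-1},\conj(\eta)\lambda_\eta)\}$, and simply note that $\lambda$-maps are the $\alpha_\eta$ of our setup; then the lemma is immediate and only the isomorphism claim needs the computation above.
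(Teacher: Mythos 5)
Your proposal is correct and follows essentially the same route as the paper: the key identity $g^*_\eta g^*_{\eta'}=g^*_{\eta\circ\eta'}$, obtained from the two coordinate computations you describe (including the commutation $\alpha_\eta\conj(\eta')=\conj(\alpha_\eta(\eta'))\alpha_\eta$), is exactly the computation in the paper's proof, and your regularity argument via $g^*_\eta(1_N)=\eta^{-1}$ is the standard one. The re-indexing you defer at the end is precisely what the paper dispatches in its opening sentence via $(\eta\cdot\eta')^{-1}=\eta^{-1}\cdot^\opp\eta'^{-1}$: since $g^*_\eta$ has first component $\eta^{-1}$, inversion gives the isomorphism from $(B,\cdot^\opp,\circ)$ onto the skew brace determined by $G^*$, so the bookkeeping closes exactly as you anticipate.
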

\begin{proof}
Since $(\eta \cdot \eta')^{-1} = \eta^{-1} \cdot^\opp \eta'^{-1}$, it suffices by (\ref{def-circ}) to show that if $g_{\eta} g_{\eta'} = g_\mu$ then $g^*_{\eta} g^*_{\eta'} = g^*_\mu$. Now
$$ g_\eta g_{\eta'} = (\eta \alpha_{\eta}(\eta') , \alpha_{\eta} \alpha_{\eta'}), $$
so
$$ \mu = \eta \alpha_{\eta}(\eta'), \qquad \alpha_\mu=  \alpha_{\eta} \alpha_{\eta'}. $$
We have
$$ g^*_{\eta} g^*_{\eta'} 
  =  (\eta^{-1} \bigl( \conj(\eta) \alpha_{\eta} \bigr) (\eta'^{-1}) , 
	\conj(\eta) \alpha_{\eta}  \conj(\eta') \alpha_{\eta'} \bigr) . $$
The first component is 
$$ \eta^{-1} \eta \alpha_{\eta} (\eta')^{-1} \eta^{-1}
	= \mu^{-1}, $$
while the second is the automorphism taking $\nu \in N$ to 
\begin{eqnarray*}
	\eta \alpha_{\eta} \bigl( \eta' \alpha_{\eta'}(\nu)\eta'^{-1} \bigr) \eta^{-1} 
	& = & \eta \alpha_{\eta}(\eta') \alpha_\eta(\alpha_{\eta'}(\nu)) \alpha_\eta(\eta')^{-1}  \eta^{-1} \\
	& = & \mu \alpha_\mu(\nu) \mu^{-1} \\
	& = & \conj(\mu) \alpha_\mu(\nu).
\end{eqnarray*}
Hence 
$$ g^*_{\eta} g^*_{\eta'} = (\mu^{-1}, \conj(\mu) \alpha_\mu) = g^*_\mu. $$
\end{proof}

\subsection{Simple skew braces coming from nonabelian simple groups}
\label{nonabsim}
Since an ideal in a brace $(B,\cdot, \circ)$ is a normal subgroup in each of the groups $(B,\cdot)$ and $(B,\circ)$, it is clear that if either of these groups is a simple group then $B$ must be a simple skew brace. 

In the context of Hopf-Galois theory, it has been shown \cite{simple} that if the holomorph $\Hol(N)$ of some finite group $N$ contains a regular subgroup $G$ which is a nonabelian simple group then $N \cong G$ and $G$ is the image of either the left or right regular representation of $N$. This means that if $B$ is a finite skew brace for which $(B,\circ)$ is a nonabelian simple group then $B$ is either the trivial skew brace $(B,\circ,\circ)$ or its opposite skew brace $(B,\circ^\opp, \circ)$.

On the other hand, there are finite skew braces where $(B,\cdot)$ is a nonabelian simple group but $(B,\circ) \not \cong (B, \cdot)$. We will say that $(B,\cdot)$ has an exact factorisation $A \cdot C$ if $A$ and $C$ are subgroups of $(B,\cdot)$ such that every element of $B$ can be written uniquely in the form $a \cdot c$ with $a \in A$ and $c \in C$. By \cite[Theorem 2.3]{SV}, any such exact factorisation gives rise to a skew brace with additive group $B$ and multiplicative group $A \times C$. For example, writing $A_n$ for the alternating group of degree $n$ and $C_m$ for the cyclic group of order $m$, the exact factorisation $A_5=A_4 \cdot C_5$ gives a skew brace whose additive group is the nonabelian simple group $A_5$ and whose multiplicative group is the soluble group $A_4 \times C_5$. Tsang \cite{Tsang}  has determined all cases where the additive group is a nonabelian simple group and the multiplicative group is soluble. 

\section{Construction of our simple skew braces}

\begin{notation} \label{pq}
For the rest of the paper, we fix primes $p$ and $q$ such that $q$ divides $(p^p-1)/(p-1)$, and we fix 
$n=p^p q$, as in Theorem \ref{main}. 
\end{notation}

Since $\gcd(p-1,p^p-1)=p-1$, the condition on $p$ and $q$ holds if and only if $q$ divides $p^p-1$ but does not divide $p-1$. Our hypotheses mean that the multiplicative order of $p$ mod $q$ is $p$, so in particular $p$ divides $q-1$. It is possible that $p^2$ divides $q-1$, the smallest instance of this being $p=59$, $q=141\,579\,233$. (The author is indebted to Andrew Darlington for finding this example.) 

In this section, we will construct a simple skew brace $(B, \cdot, \circ)$ of order $n=p^p q$ as a regular subgroup in the holomorph of certain group $N$. Thus $(B,\cdot) \cong N$. We will then make explicit the operation $\circ$, and give a similar description of the opposite skew brace $B^\opp$. We fix the following notation, which will be in force for the rest of the paper.

\begin{notation}  \label{fix-K}
Let $V=\F_p^p$ be the space of $p$-component column vectors over the field $\F_p$ of $p$ elements. Then we may view $V$ as an elementary abelian group of order $p^p$, and also as a left module for the matrix ring $\Mat_p(\F_p)$. Further, let $K=\F_{p^p}$ be the field of $p^p$ elements. Given a matrix $M \in \GL_p(\F_p)$ whose minimal polynomial over $\F_p$ is irreducible and of degree $p$, we may identify the subring $\F_p[M]$ of $\Mat_p(\F_p)$ with $K$, and so view $V$ as a $1$-dimensional vector space over $K$. We will variously view $V$ as a space of column vectors, an abelian group, or a $K$-vector space, as convenient. 
\end{notation}

\subsection{Characterisation of the additive group}

The additive group $N$ of the simple skew brace we will construct will be a group of order $n=p^p q$ containing a normal Sylow $p$-subgroup $P$. In any such group $N$, any Sylow $q$-subgroup $Q$ is a complement to $P$, so that $N$ is a semidirect product $N \cong P \rtimes Q$ for some action of $Q \cong C_q$ on $P$. The next results shows that, if $Q$ is not also normal on $N$, then there is exactly one possibility for $N$.

\begin{lemma} \label{constr-N}
Up to isomorphism, there is a unique group $N$ of order $n$ which has a normal Sylow $p$-subgroup $P$ but does not have a normal Sylow $q$-subgroup. It has 
the following (noncanonical) description. Let $M \in \GL_p(\F_p)$ be a matrix of order $q$. Then $N$ is the group of block matrices in $\GL_{p+1}(\F_p)$ of the form
$$  \left( \begin{array}{c|c} M^k & v \\ \hline 0 & 1 \end{array} \right) \mbox{ for }   0 \leq k \leq q-1 \mbox{ and } v \in V, $$
and $P$ is the subgroup of such matrices with $k=0$.    
\end{lemma}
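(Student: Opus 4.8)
The plan is to pin down the isomorphism type of $N$ by a Schur--Zassenhaus reduction, followed by a classification of the resulting $\F_p[C_q]$-module, and then to check directly that the displayed block-matrix group realises the unique possibility. Since $\gcd(p^p,q)=1$, the Schur--Zassenhaus theorem gives $N = P \rtimes Q$ for any Sylow $q$-subgroup $Q \cong C_q$, and conjugation defines a homomorphism $\phi\colon Q \to \Aut(P)$. The hypothesis that $Q$ is \emph{not} normal in $N$ is exactly the statement that $\phi$ is nontrivial, hence injective since $|Q|=q$ is prime; so $q \mid |\Aut(P)|$.

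The crux is to show that this forces $P$ to be the elementary abelian group $V \cong C_p^p$. The kernel of the restriction $\Aut(P) \to \Aut(P/\Phi(P))$ is a $p$-group (Burnside's basis theorem), so $q$ must divide $|\Aut(P/\Phi(P))| = |\GL_d(\F_p)|$, where $d = \dim_{\F_p} P/\Phi(P) \le p$. As $|\GL_d(\F_p)| = p^{d(d-1)/2}\prod_{i=1}^d(p^i-1)$ and $q \ne p$, we need $q \mid p^i-1$ for some $i$ with $1 \le i \le d \le p$; but the hypothesis $q \mid (p^p-1)/(p-1)$ makes the multiplicative order of $p$ modulo $q$ equal to $p$, so $q \mid p^i-1$ only when $p \mid i$, forcing $i = d = p$. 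Hence $\Phi(P) = \{1\}$ and $P \cong V$. (The same order computation shows a matrix of order $q$ exists in $\GL_p(\F_p)$, so at least one such $N$ exists.)

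Next I would classify the nontrivial homomorphisms $\phi\colon C_q \to \GL(V) = \GL_p(\F_p)$ up to conjugation in $\GL_p(\F_p)$ (change of basis of $V$) and precomposition with $\Aut(C_q)$. A generator $t$ of $Q$ goes to a matrix $M$ of order $q$; its minimal polynomial divides $x^q-1 = (x-1)(x^{q-1}+\cdots+1)$, and every irreducible factor of $x^{q-1}+\cdots+1$ over $\F_p$ has degree $p$ (again because $\ord_q(p)=p$). Since $M$ has order $q>1$ and $\dim V = p$, its minimal polynomial must be exactly one such factor $f$, so $V \cong \F_p[x]/(f) \cong \F_{p^p}$ as an $\F_p[M]$-module and $M$ is determined up to conjugacy by $f$. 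The $(q-1)/p$ choices of $f$ biject with the Frobenius orbits on the primitive $q$-th roots of unity in $\F_{p^p}$, and $(\Z/q)^\times$ permutes these orbits transitively; so replacing $M$ by a suitable power (i.e.\ twisting $\phi$ by an automorphism of $C_q$) passes between any two. Finally, an isomorphism $V \rtimes_\phi Q \to V \rtimes_{\phi'} Q$ carries the (characteristic) normal Sylow $p$-subgroup $V$ to $V$, giving $A \in \GL(V)$, and carries $t$ to an element $(w,t^c)$ with $\gcd(c,q)=1$, whence $A\,\phi(t)\,A^{-1} = \phi'(t)^c$; thus uniqueness of $\phi$ up to these equivalences is equivalent to uniqueness of $N$ up to isomorphism.

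It then remains to verify the concrete model. With $M$ of order $q$, the block multiplication $\left(\begin{smallmatrix} M^k & v \\ 0 & 1\end{smallmatrix}\right)\left(\begin{smallmatrix} M^{k'} & v' \\ 0 & 1\end{smallmatrix}\right) = \left(\begin{smallmatrix} M^{k+k'} & v + M^k v' \\ 0 & 1\end{smallmatrix}\right)$ shows the displayed set is a subgroup of $\GL_{p+1}(\F_p)$ of order $p^p q$ with $P = \{k=0\} \cong (V,+)$ a normal Sylow $p$-subgroup, on which the Sylow $q$-subgroup $\{v=0\}$ acts by $v \mapsto M^k v$; this action is nontrivial since $M \ne I$, and $Q$ is not normal because conjugating it by $v \in V$ shifts the vector part by $(I-M^k)v$, which is nonzero whenever $v \ne 0$ and $k \not\equiv 0$, as $I-M^k$ is invertible ($f$ is not divisible by $x-1$). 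By the uniqueness established above, this group is $N$. The only real obstacle is the middle step: everything hinges on the single arithmetic fact that $q \mid (p^p-1)/(p-1)$ forces $\ord_q(p)=p$, which simultaneously excludes $\GL_d(\F_p)$ for $d<p$ and forces the minimal polynomial of $M$ to have full degree $p$.
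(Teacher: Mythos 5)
Your proposal is correct and follows essentially the same route as the paper: a Frattini/Burnside argument (the quantitative form of which the paper cites from Robinson) forces $P$ to be elementary abelian because $\ord_q(p)=p$ rules out $q\mid|\GL_d(\F_p)|$ for $d<p$, and the action of $C_q$ is then pinned down by the fact that any order-$q$ matrix has irreducible minimal polynomial of degree $p$, with the remaining ambiguity absorbed by conjugation in $\GL_p(\F_p)$ and replacing the generator of $Q$ by a power. Your phrasing via irreducible factors of $x^{q-1}+\cdots+1$ and Frobenius orbits on primitive $q$-th roots of unity is just the paper's identification of $\F_p[M]$ with $\F_{p^p}$ and its choice of an order-$q$ element $\alpha$ in different language.
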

\begin{proof}
We first note that the existence of a matrix $M$ of order $q$ is guaranteed. The multiplicative group $K^\times $ of $K$ is cyclic of order $p^p-1$, so, by our hypotheses in Notation \ref{pq},  $K$ contains an element $\alpha$ of order $q$ and $\F(\alpha)=K$. The minimal polynomial $f(X)$ of $\alpha$ over $\F_p$ is then irreducible of degree $p$, and any matrix $M \in \GL_p(\F_p)$ with characteristic polynomial $f(X)$, for example the companion matrix of $f(X)$, has order $q$. 

Now let $M \in \GL_p(\F_p)$ be any matrix of order $q$. As $q$ is prime, $M$ has an eigenvalue $\alpha$ of order $q$ in some extension of $\F_p$, and our hypotheses ensure that $\F_p(\alpha)=K$. The minimal polynomial of $\alpha$ over $\F_p$ is irreducible of degree $p$ and therefore coincides with the minimal polynomial of $M$. Starting with this matrix $M$, we obtain a group $N$ of  
order $p^p q$ consisting of block matrices, as in the statement of the Lemma. The elements of $N$ with $k=0$ form a normal Sylow $p$-subgroup $P$. We identify this subgroup with $V$. The elements with $v=0$ form a Sylow $q$-subgroup $Q$ which is not normal in $N$. 

Finally, let $H$ be any group of order $n$ which has a normal Sylow $p$-subgroup $P$. Let $Q$ be a Sylow $q$-subgroup, and assume that $Q$ is not normal in $H$. We will show that $H \cong N$. (In particular, this will show that, up to isomorphism, the group $N$ does not depend on the choice of $M$.) Our hypotheses on $H$ ensure that $H = P \rtimes Q$ where  $Q$ acts nontrivially on $P$, so conjugation by a generator $\sigma$ of $Q$ induces an automorphism of $P$ of order $q$. Let $F$ be the Frattini subgroup of $P$. Then $P/F$ is an elementary abelian group of order $p^r$ for some $r \leq p$. By \cite[5.3.3]{Rob}, $|\Aut(P)|$ divides $p^{(p-r)r} |\GL_r(\F_p)|$. Since $q \nmid |\GL_s(\F_p)|$ when $s<p$, it follows that $r=p$ and $F$ is trivial. Hence we can (and do) identify $P$ with $\F_p^p$, and $\sigma$ acts on $P$ as some matrix $M_1 \in \GL_p(\F_p)$ of order $q$. Identifying $\F_p[M_1]$ with $K$, we then have that $\sigma$ acts on $P$ as multiplication by some element $\beta \in K^\times$ of order $q$. Replacing $\sigma$ by a suitable power, we may assume that $\beta$ coincides with the eigenvalue $\alpha$ of the matrix $M$ used to construct $N$. Then $M_1$ and $M$ have the same minimal polynomial, which is irreducible of degree $p$. It follows that $M_1$ and $M$ are conjugate in $\GL_p(\F_p)$. Making a change of basis, we may therefore assume that $M_1=M$. Hence $H \cong N$.
\end{proof}

\begin{lemma} \label{N-subgroups}
In the group $N$ constructed in Lemma \ref{constr-N}, the Sylow $p$-subgroup $P$ is the unique nontrivial proper normal subgroup in $N$. Also, $N$ has no subgroup of order $p^i q$ for $0<i <p$.  
\end{lemma}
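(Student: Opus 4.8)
The plan is to reduce everything to a single structural fact: the additive group $V=P$, viewed as a module for the Sylow $q$-subgroup $Q$ of $N$, is \emph{irreducible}. Indeed, a generator $\sigma$ of $Q$ acts on $P=V=\F_p^p$ as the matrix $M$ of Lemma~\ref{constr-N}, so the image of $\F_p[Q]$ in $\End_{\F_p}(V)$ is $\F_p[M]$, which we identified with $K$; since $V$ is then a one‑dimensional $K$-vector space, its $\F_p[Q]$-submodules are precisely its $K$-subspaces, namely $\{0\}$ and $V$. Because $P$ is elementary abelian, its subgroups are exactly the $\F_p$-subspaces of $V$, and such a subgroup is normalised by $\sigma$ (equivalently by $Q$) if and only if it is a $K$-subspace. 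Hence the only $Q$-invariant subgroups of $P$ are $1$ and $P$. I would state this observation first and then use it twice.

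For the first assertion, let $L$ be a normal subgroup of $N$ with $1<|L|<|N|$. Then $L\cap P$ is normal in $N$ and contained in the abelian group $P$, hence $Q$-invariant, so $L\cap P\in\{1,P\}$. If $L\cap P=P$ then $P\leq L$, and $L/P$ is a proper subgroup of $N/P\cong C_q$, forcing $L=P$. If $L\cap P=1$ then $L\cong LP/P\leq N/P\cong C_q$, so $|L|=q$ and $L$ is a \emph{normal} Sylow $q$-subgroup of $N$, contradicting the construction of $N$ in Lemma~\ref{constr-N}. Therefore $L=P$.

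For the second assertion, suppose $H\leq N$ with $|H|=p^iq$ for some $0<i<p$. Since $|HP|$ is divisible by both $|H|=p^iq$ and $|P|=p^p$ while dividing $|N|=p^pq$, we get $HP=N$, and hence $|H\cap P|=|H|\,|P|/|N|=p^i$. As $P$ is abelian, $H\cap P\leq C_H(P)$, so the conjugation action of $H$ on $P$ factors through $H/(H\cap P)\cong HP/P=N/P\cong Q$; one checks this is precisely the module action of $Q$ on $V$, and since $HP=N$ the whole of $Q$ is realised. Thus $H\cap P$, being normal in $H$, is $Q$-invariant, so $H\cap P\in\{1,P\}$ — contradicting $|H\cap P|=p^i$ with $0<i<p$. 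Hence no such $H$ exists.

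The only genuine content is the irreducibility of $V$ as an $\F_p[Q]$-module, which is immediate from the way $N$ was built in Lemma~\ref{constr-N}; the rest is bookkeeping with subgroup orders together with the standard correspondence — valid because $P$ is abelian and normal — between $H$-invariant subgroups of $P$ and $\F_p[Q]$-submodules of $V$. I do not anticipate a real obstacle, but the one point deserving care is the verification that the conjugation action of $H$ on $H\cap P$ truly factors through $N/P$ and coincides there with multiplication by the appropriate power of $\alpha$.
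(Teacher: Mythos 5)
Your proof is correct and follows essentially the same route as the paper: both arguments reduce everything to the observation that a $Q$-invariant subgroup of $P$ is a $K$-subspace of the one-dimensional $K$-vector space $V$ and hence is $0$ or $V$, then apply this to $L\cap P$ for a normal subgroup $L$ and to $H\cap P$ for a subgroup $H$ of order $p^iq$. Your write-up is somewhat more explicit about why the conjugation action on $H\cap P$ realises all of $Q$ (via $HP=N$), but the substance is identical.
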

\begin{proof}
We again view $P$ as a $1$-dimensional vector space over $K \cong \F_p[M]$, so that conjugation by $M$ corresponds to multiplication by an element $\alpha \in K$ of order $q$. Then $\F_p(\alpha)=K$. If $L \lhd N$ then $L \cap P \lhd N$, and $L \cap P$ is a $K$-subspace of $P$. Thus $L \cap P=0$ or $P$. If $L \cap P=0$ then $L$ is trivial since $N$ has no normal subgroup of order $q$. If $L \cap P=P$ then $L=P$ or $N$. This proves the first statement. For the second statement, if $|H|=p^i q$ then $H \cap P \lhd H$, and some element of order $q$ in $H$ acts on $H \cap P$ as multiplication by $\alpha$. Again $H \cap P$ is a $K$-subspace of $P$, so $i=0$ or $p$.  
\end{proof}

We next record a number of facts about the the matrix $M$ which will be needed later in the paper.  

\begin{lemma} \label{centr-norm}   
Let $M \in \GL_p(\F_p)$ be any matrix of order $q$.
\begin{itemize}
\item[(i)]
The only $M$-invariant $\F_p$-subspaces of $V$ are $0$ and $V$. 
\item[(ii)]
$M-I$ is invertible.
\item[(iii)]
If $w \in V$ with $w \neq 0$ then the vectors $(M^i-I)w$ for $i \in \{1, 2, \ldots,p\}$ span $V$. 
\item[(iv)]
The centraliser in $\GL_p(\F_p)$ of the subgroup $\langle M \rangle$ is $\F_p[M]^\times$, which is cyclic of order $p^p-1$, generated by a matrix $T$ with $T^{(p^p-1)/q}=M$.
\item[(v)] If $A \in \GL_p(\F_p)$ and $AM=MA$, then the only $A$-invariant subspaces of $V$ are  $0$ and $V$ unless $A$ is a scalar matrix $\lambda I$ for some $\lambda \in \F_p^\times$.
\item[(vi)] There is a matrix $J \in \GL_p(\F_p)$ such that $JMJ^{-1}=M^p$ and $J^p=I$. Any such $J$ has minimal polynomial $X^p-1=(X-1)^p$ over $\F_p$.
\item[(vii)] The normaliser in $\GL_p(\F_p)$ of the subgroup $\langle M \rangle$ is generated by $T$ and $J$ and has order $(p^p-1)p$. Any element $A$ of this normaliser can be written $A=T^i J^j$ with $0 \leq i < p^p-1$ and $0 \leq j <p$. Then $AMA^{-1}=M^{p^j}$. Also, $A$ has order $p$ if and only if $i$ is divisible by $p-1$ and $i \neq 0$, $j \neq 0$, and $A$ has order $q$ if and only if $A=M^r$ with $1 \leq r \leq q-1$.
\end{itemize}

\end{lemma}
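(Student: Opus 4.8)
The seven parts are mostly facts about linear algebra over $\F_p$ and the structure of the normaliser of a Singer-type cyclic subgroup, so I would prove them in the order (i)--(vii), using earlier parts to streamline later ones. The governing observation, already used in Lemma \ref{constr-N}, is that $\F_p[M] \cong K = \F_{p^p}$ because $M$ has irreducible minimal polynomial of degree $p$; I would fix this identification at the outset and repeatedly translate between matrices commuting with $M$ and elements of $K$.

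For (i): any $M$-invariant $\F_p$-subspace $W$ is an $\F_p[M]$-submodule of $V$, hence a $K$-subspace of the one-dimensional $K$-space $V$, so $W = 0$ or $V$. Part (ii) is immediate since $1$ is not an eigenvalue of $M$ (the eigenvalues of $M$ have order $q > 1$ in $\overline{\F_p}^\times$), equivalently $M - I \in \F_p[M]^\times = K^\times$. For (iii), given $w \neq 0$ let $W$ be the span of the $(M^i - I)w$; since $M$ permutes this set up to the relation $M^q = I$ (more precisely $MW \subseteq W + \F_p w$, and one checks $M$-invariance of $W + \F_p w$), one sees $W + \F_p w$ is $M$-invariant and nonzero, hence equals $V$; then a dimension/Vandermonde count on the powers $M, M^2, \dots, M^p$ acting on the line $Kw = V$ forces $W = V$ itself. (Alternatively: in $K$, the elements $\alpha^i - 1$ for $i = 1, \dots, p$ are $\F_p$-linearly independent because $1, \alpha, \dots, \alpha^{p-1}$ are, since $\F_p(\alpha) = K$ and the $\alpha^i$ with $i=1,\dots,p$ together with $1$ would otherwise be dependent in the wrong way — this needs a short argument that no nontrivial $\F_p$-combination of $\alpha, \dots, \alpha^p$ equals a multiple of $1$, which follows from $[\F_p(\alpha):\F_p] = p$.) Part (iv) is the standard fact that the centraliser of $\langle M \rangle$ in $\GL_p(\F_p)$ equals the centraliser of $\F_p[M]$, which is $\F_p[M]^\times = K^\times$, cyclic of order $p^p - 1$; choosing a generator $T$ of $K^\times$, the element $T^{(p^p-1)/q}$ has order $q$ and lies in $\F_p[M]^\times$, and by replacing $T$ with a suitable power (using that $\Aut(K/\F_p)$ acts transitively on elements of order $q$, or just re-choosing which order-$q$ element we called $M$) we may arrange $T^{(p^p-1)/q} = M$. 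Part (v): if $A$ commutes with $M$ then $A \in K^\times$; an $A$-invariant subspace is then a module over $\F_p[A] \subseteq K$, and $\F_p[A] = K$ unless $\F_p[A] = \F_p$, i.e. $A$ is scalar — so either $A$ is scalar or the only invariant subspaces are $0$ and $V$.

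For (vi) and (vii) I would invoke Galois theory: the map $x \mapsto x^p$ is the Frobenius generator of $\Gal(K/\F_p)$, which has order $p$, and conjugation by elements of $N_{\GL_p(\F_p)}(\langle M \rangle)$ on $K^\times = \F_p[M]^\times$ realises automorphisms of $K$ fixing $\F_p$ (since it must send $M$ to a generator of $\langle M \rangle$ and be $\F_p$-linear and multiplicative on $\F_p[M]$, hence is a field automorphism). Concretely: by the Skolem--Noether theorem (or a direct semilinear-map construction) there is $J \in \GL_p(\F_p)$ inducing Frobenius, so $JMJ^{-1} = M^p$; since $J^p$ centralises $M$ it lies in $K^\times$, and replacing $J$ by $J \cdot c$ for a suitable $c \in K^\times$ with $c \cdot J(c) \cdots J^{p-1}(c) = (J^p)^{-1}$ (solvable because $\mathrm{Nm}_{K/\F_p}$ is surjective) we get $J^p = I$. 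The minimal polynomial claim: $J \neq I$ but $J^p = I$, so its minimal polynomial divides $X^p - 1 = (X-1)^p$ and is not $X - 1$; if it were $(X-1)^d$ with $d < p$ then $J$ would have a large fixed space, but $J$ acts on $K$ as Frobenius whose fixed space is only $\F_p$ (one-dimensional), forcing $(J - I)^{p-1} \neq 0$, hence $d = p$. For the normaliser: $N_{\GL_p(\F_p)}(\langle M \rangle)$ maps onto $\Gal(K/\F_p) \cong C_p$ with kernel the centraliser $K^\times$, so it has order $(p^p - 1)p$ and is generated by $T$ and $J$; every element is $T^i J^j$ with $0 \le i < p^p - 1$, $0 \le j < p$, and $J^j M J^{-j} = M^{p^j}$ gives the conjugation formula. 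Finally, for the order computations: $A = T^i J^j$ has $A$ of order $q$ iff it lies in the unique cyclic subgroup $\langle M \rangle$ of order $q$ inside $K^\times$ (which requires $j = 0$ and $T^i$ of order $q$, i.e. $A = M^r$); and $A$ has order $p$ iff its image in $C_p$ is nontrivial (so $j \neq 0$) and $A^p = I$ — computing $A^p = T^{i(1 + p + \cdots + p^{p-1})} J^{jp} = T^{i(p^p-1)/(p-1)}$, which is $I$ iff $(p-1) \mid i$, and this must be combined with $i \neq 0$ to exclude $A = J^j$ having order $p$ only — wait, $J^j$ already has order $p$ when $j \neq 0$, so in fact the condition is just $(p-1) \mid i$ and $j \neq 0$; I should double-check the edge case $i = 0$, which gives $A = J^j$ of order $p$, consistent with $(p-1) \mid 0$, so the stated condition "$i$ divisible by $p-1$ and $i \neq 0$" in the lemma presumably is under the running assumption that one has already excluded the pure-$J$ case or is listing a representative form — I would align the write-up with the precise normalisation used, but the arithmetic is the routine part.

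\textbf{Main obstacle.} The computational heart is (vi)--(vii): producing $J$ with $J^p = I$ on the nose (not merely $J^p \in K^\times$) via surjectivity of the norm map, pinning down its minimal polynomial, and then doing the order bookkeeping for $A = T^i J^j$ cleanly — in particular getting the interaction between "the $C_p$-component is nontrivial" and "$A^p = I$" exactly right, since a sloppy treatment here is where an off-by-one in the exponent $(p^p-1)/(p-1)$ would hide. Parts (i)--(v) are short once the $\F_p[M] = K$ dictionary is in place.
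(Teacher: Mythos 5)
Your overall strategy coincides with the paper's: identify $\F_p[M]$ with $K=\F_{p^p}$, use the Double Centraliser and Skolem--Noether theorems, and read off the normaliser as $K^\times\rtimes\Gal(K/\F_p)$. But part (iii) has a genuine gap. Your first route only yields (at best) that $W+\F_p w$ is $M$-invariant, hence $\dim W\ge p-1$; nothing in the ``Vandermonde count'' you gesture at rules out $\dim W=p-1$. (In fact even the $M$-invariance of $W+\F_p w$ is not immediate, since $M(M^p-I)w$ involves $(M^{p+1}-I)w$, which is not visibly in $W+\F_p w$.) Your fallback reduces the independence of $\alpha-1,\dots,\alpha^p-1$ to the claim that no nontrivial $\F_p$-combination of $\alpha,\dots,\alpha^p$ is a multiple of $1$ --- but that claim is false: $\alpha,\dots,\alpha^p$ is $\alpha$ times a basis of $K$, hence itself a basis, so every element of $\F_p$ is such a combination. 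What you actually need is the coupled statement that $\sum c_i\alpha^i=\sum c_i$ forces all $c_i=0$. The paper gets this cleanly by writing $M^i-I=(M-I)(I+M+\cdots+M^{i-1})$ and observing that $I,\ I+M,\ \dots,\ I+M+\cdots+M^{p-1}$ is a unitriangular change of basis from the power basis of $\F_p[M]$, so multiplying by the unit $M-I$ again gives a basis; applying that basis to $w\neq 0$ spans $V$. You should supply this (or the equivalent telescoping computation) to close (iii).

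Two smaller remarks. In (vi), your norm-equation normalisation of $J$ works, but only after you note that $J^p$ lies not merely in $K^\times$ but in $\F_p^\times$ (it is fixed by conjugation by $J$, i.e.\ by Frobenius), since $\mathrm{Nm}_{K/\F_p}$ surjects onto $\F_p^\times$ only; the paper instead replaces $J$ by $J^{1-p^p}$, and your ``one-dimensional Frobenius fixed space'' argument for the minimal polynomial is a valid variant of the paper's trace argument. In (vii) your bookkeeping is right, and your worry about the edge case $i=0$ is well founded: $A=J^j$ with $j\neq 0$ does have order $p$, the paper's own computation establishes exactly your condition ($(p-1)\mid i$ and $j\neq 0$), the extra clause ``$i\neq 0$'' in the statement appears to be a slip, and the later application in Lemma \ref{type-i} indeed allows $A=T^{(p-1)h}J$ with $h=0$.
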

\begin{proof}
(i)	We again identify $\F_p[M]$ with $K$, so that conjugation by $M$ corresponds to multiplication by an element $\alpha \in K$ of order $q$. If $W$ is an $M$-invariant $\F_p$-subspace of $V$ then $\alpha W=W$. As $\F_p(\alpha)=K$, we must have $W=0$ or $V$. 

\noindent (ii) By (i), the subspace $\{v \in V: Mv=v\}$ is trivial. Hence $M-I$ is invertible.

\noindent (iii) Since the minimal polynomial of $M$ over $\F_p$ has degree $p$, it is clear that the matrices $I$, $M+I$, $M^2+M+I$, \ldots, $M^{p-1}+ \cdots +M+I$ form a basis of the $\F_p$-algebra $\F_p[M]$. Multiplying by the invertible matrix $M-I$, we find that $M-I$, $M^2-I$, \ldots, $M^p-I$ is also a basis. Thus the $(M^i-I)w$ span $V$.

\noindent (iv) By the Double Centraliser Theorem \cite[\S12.7]{Pierce}, the centraliser of the subalgebra $\F_p[M]$ in $\Mat_p(\F_p)$ is $\F_p[M]$ itself. Hence the centraliser of $\langle M \rangle$ in $\GL_p(\F_p)$ is $\F_p[M]^\times \cong K^\times$, which is cyclic of order $p^p-1$. As $M$ has order $q$, there is some generator $T$ with $T^{(p^p-1)/q}=M$.  

\noindent (v) We have $A=T^i$ for some $i$, and $A$ acts on $V$ as multiplication by some $\lambda \in K^\times$. If $\lambda \not \in \F_p$ then $\F_p(\lambda)=K$ and the only $A$-invariant subspaces are $0$ and $V$. 

\noindent (vi) By the Noether-Skolem Theorem \cite[\S12.6]{Pierce}, every automorphism of $\F_p[M]$ is induced by conjugation by some element of $\GL_p(\F_p)$. In particular, there is some matrix $J$ inducing the Frobenius automorphism $\phi: x \mapsto x^p$. Thus $JMJ^{-1}=M^p$. Then $J^p$ centralises $\langle M \rangle$, so $J^p$ has order dividing $p^p-1$ by (iv).  
Replacing $J$ by $J^{1-p^p}$, we may therefore assume that $J^p=I$. Clearly $J$ has characteristic polynomial $X^p-1=(X-1)^p$ over $\F_p$, so it only remains to verify that $(J-I)^{p-1} \neq 0$. Now $(J-I)^{p-1}=J^{p-1}+ \cdots+ J +I$ acts on $\F_p[M]$ as the trace operator $\phi^{p-1} + \cdots + \phi +1$ from $K$ to $\F_p$. Since the trace is nondegenerate, we have $(J-I)^{p-1} \neq 0$ as required. 

\indent (vii) If $AMA^{-1}=M^s$ then conjugation by $A$ induces an automorphism of the $\F_p$-algebra $\F_p[M] \cong K$. Since the Galois group of $K/\F_p$ is cyclic of order $p$, generated by $\phi$, we must have $AMA^{-1}=M^{p^j}$ with $0 \leq j \leq p-1$. Then $AJ^{-j}$ centralises $\langle M \rangle$, so, by (iv), $A=T^i J^j$ for some $i$, and $AMA^{-1}=A^{p^j}$. Hence the normaliser is generated by $T$ and $J$ and has order $(p^p-1)p$. We have 
$$ A^k = T^{i(1+p^j+p^{2j} + \cdots + p^{(k-1)j})} J^{jk}. $$
For $A$ to have order $p$, we need $j \neq 0$ and $i(1+p^j+p^{2j} + \cdots + p^{(k-1)j}) \equiv 0 \pmod{p^p-1}$. But for $j \neq 0$, the residue classes $p^j$, $p^{2j}$,\ldots ,$p^{(p-1)j}$ mod $p^p-1$ are congruent in some order to $1$, $p$, $p^2$, \ldots, $p^{p-1}$, so $A$ has order dividing $p$ if and only if $i(1+p+ \cdots + p^{p-1}) \equiv 0 \pmod{p^p-1}$, which holds precisely when $p-1$ divides $i$. For $A$ to have order $q$ we need $j=0$, so that $A^k=T^{ik}$, with $i \neq 0$ and $i=(p^p-1)r/q$ for some $r$. Then $A=M^r$. 
\end{proof}

\begin{remark}  \label{J-Jordan}
Any matrix in $\GL_p(\F_p)$ with minimal polynomial $X^p-1$ is conjugate to the Jordan matrix 
\begin{equation} \label{Jordan} 
		J = \begin{pmatrix} 1 & 1 & 0 & 0 & \ldots & 0 & 0 \\
			0 & 1 & 1 & 0 & \ldots & 0 & 0 \\
			0 & 0 & 1 & 1 & \ldots & 0 & 0 \\
			\vdots &  \vdots & \vdots &  \vdots &        & \vdots & \vdots \\
				0 & 0 & 0 & 0 & \ldots & 1 & 1 \\
				0 & 0 & 0 & 0 & \ldots & 0 & 1 \end{pmatrix}. 
\end{equation} 
so, by a change of basis, we choose the matrix $M$ of order $q$ so that the matrix $J$ in Lemma \ref{centr-norm}(vi) is as in (\ref{Jordan}).
\end{remark}

We next describe the automorphisms of the group constructed in Lemma \ref{constr-N}.

\begin{lemma} \label{Aut-N}
$\Aut(N)$ can be identified with the group of block matrices
$$	\left\{ \left( \begin{array}{c|c} A & v \\ \hline 0 & 1 \end{array} \right) :  A \in \GL_p(\F_p) \mbox{ normalises } \langle M \rangle, \, v \in V   \right\} $$
acting on $N$ by conjugation in $\GL_{p+1}(\F_p)$. 
\end{lemma}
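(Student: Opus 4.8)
The plan is to show two things: first, that every automorphism of $N$ is realised by conjugation by a block matrix of the stated form, and second, that conjugation by such a block matrix is indeed an automorphism of $N$ (which is essentially clear since $N$ is defined as a subgroup of $\GL_{p+1}(\F_p)$ and one checks the normalising condition is exactly what preserves $N$). The key structural fact is that $P$ is characteristic in $N$ (it is the unique nontrivial proper normal subgroup, by Lemma \ref{N-subgroups}), so any $\theta \in \Aut(N)$ restricts to an automorphism of $P \cong V$, i.e.\ to some $A \in \GL_p(\F_p)$, and induces an automorphism of the quotient $N/P \cong C_q$.

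First I would fix the generator $\sigma$ of the Sylow $q$-subgroup $Q$, which acts on $P=V$ as the matrix $M$. Given $\theta \in \Aut(N)$, write $A = \theta|_P \in \GL_p(\F_p)$. Since $\theta(\sigma)$ has order $q$ and projects to a generator of $N/P$, we have $\theta(\sigma) = \sigma^s u$ for some $u \in V$ and $s$ coprime to $q$; conjugation by $\theta(\sigma)$ on $P$ is then multiplication by $M^s$. On the other hand, for $v \in P$ we compute $\theta(\sigma v \sigma^{-1}) = \theta(\sigma)\theta(v)\theta(\sigma)^{-1}$, i.e.\ $A(Mv) = M^s (Av)$ for all $v$, so $AM = M^s A$, meaning $A$ normalises $\langle M \rangle$. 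This pins down the top-left block. Next I would exhibit the vector $v$ in the claimed block matrix: conjugation by the block matrix $\left(\begin{smallmatrix} A & v \\ 0 & 1 \end{smallmatrix}\right)$ sends the element of $N$ with parameters $(k, w)$ to the element with parameters $(k, Aw + (I - M^{?})v)$ — I would record the exact formula — and since $M^k - I$ is invertible for $k \neq 0$ (Lemma \ref{centr-norm}(ii) applied to powers of $M$, noting $M^k$ also has order $q$), we can solve for $v$ so that this conjugation matches $\theta$ on a suitable generator of $Q$; combined with the agreement on $P$, the two automorphisms agree on all of $N$.

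The slightly delicate point — and the main thing to get right — is the bookkeeping at the level of the quotient versus at the level of $N$: an abstract automorphism of $N$ is determined by its effect on $P$ together with its effect on one chosen complement-generator $\sigma$, and one must check that every admissible pair $(A, \theta(\sigma))$ with $A$ normalising $\langle M\rangle$ actually arises from conjugation by a block matrix and that the vector $v$ is uniquely determined. This is where the invertibility of $M^k - I$ does the work, and where one should be careful that conjugation by $\left(\begin{smallmatrix} A & v \\ 0 & 1 \end{smallmatrix}\right)$ genuinely lands inside $N$ rather than merely inside $\GL_{p+1}(\F_p)$: the condition ``$A$ normalises $\langle M \rangle$'' is precisely what guarantees $A M^k A^{-1}$ is again a power of $M$, so the conjugate of a block matrix in $N$ is again in $N$. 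Finally I would note that distinct pairs $(A,v)$ give distinct automorphisms (the map from block matrices to $\Aut(N)$ is injective because $N$ spans enough of $\GL_{p+1}(\F_p)$, or more concretely because conjugation fixing every element of $N$ forces $A = I$, $v = 0$), so the identification is an isomorphism of groups, not merely a surjection.
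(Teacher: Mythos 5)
Your proposal is correct and follows essentially the same route as the paper: use that $P$ is characteristic to extract $A\in\GL_p(\F_p)$, apply $\theta$ to the relation $\sigma v\sigma^{-1}=Mv$ to force $AMA^{-1}=M^s$, and then use the invertibility of $I-M^s$ (Lemma \ref{centr-norm}(ii)) to solve uniquely for the translation vector $v$. The only detail to fix when you ``record the exact formula'' is that conjugation sends the element with parameters $(k,w)$ to one with first parameter $k'$ where $AM^kA^{-1}=M^{k'}$, not $k$ itself.
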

\begin{proof}
Let $\theta \in \Aut(N)$. As $P$ is characteristic in $N$, there is some $A \in \GL_p(\F_p)$, uniquely determined by $\theta$, such that
$$ \theta \left( \left( \begin{array}{c|c} I & w \\ \hline 0 & 1 \end{array} \right) \right)
= \left(  \begin{array}{c|c} I & Aw \\ \hline 0 & 1 \end{array} \right) $$
for all $w \in V$. Also
$$ \theta \left(  \left( \begin{array}{c|c} M & 0 \\ \hline 0 & 1 \end{array} \right) \right)
= \left(  \begin{array}{c|c} M^k & u \\ \hline 0 & 1 \end{array} \right) 
$$   
for some $k \in \{1, 2, \ldots, q-1\}$ and some $u \in V$. Applying $\theta$ to the relation
$$ \left(  \begin{array}{c|c} M & 0 \\ \hline 0 & 1 \end{array} \right) 
\left(  \begin{array}{c|c} I & w \\ \hline 0 & 1 \end{array} \right) 
= \left(  \begin{array}{c|c} I & Mw \\ \hline 0 & 1 \end{array} \right) 
\left(  \begin{array}{c|c} M & 0 \\ \hline 0 & 1 \end{array} \right), $$
we have
$$ \left(  \begin{array}{c|c} M^k & u \\ \hline 0 & 1 \end{array} \right) 
\left(  \begin{array}{c|c} I & Aw \\ \hline 0 & 1 \end{array} \right) 
= \left(  \begin{array}{c|c} I & AMw \\ \hline 0 & 1 \end{array} \right) 
\left(  \begin{array}{c|c} M^k & u \\ \hline 0 & 1 \end{array} \right). $$ 
This holds for all $w$ if and only if $AMA^{-1}=M^k$. Hence $A$ normalises $\langle M \rangle$. The matrix $I-AMA^{-1}=A(M-I)A^{-1}$ is invertible by Lemma \ref{centr-norm}(ii). Thus there is a unique $v \in V$ with $(I-AMA^{-1})v=u$. Then $\theta$ corresponds to conjugation by the block matrix
$$  \left(  \begin{array}{c|c} A & v \\ \hline 0 & 1 \end{array} \right).  $$ 
Conversely, for any $A \in \GL_p(\F_p)$ normalising $\langle M \rangle$ and any $v \in V$, we easily verify that conjugation by the above matrix gives an automorphism of $N$. 
\end{proof}

We record that
$$ 	 \left( \begin{array}{c|c} A & v \\ \hline 0 & 1 \end{array} \right)^{-1} = 
   	 \left( \begin{array}{c|c} A^{-1} & -A^{-1}v \\ \hline 0 & 1 \end{array} \right) $$
and, for $k \geq 0$, 
$$  \left( \begin{array}{c|c} A & v \\ \hline 0 & 1 \end{array} \right)^k = 
     \left(\begin{array}{c|c} A^k & A^{[k]} v \\ \hline 0 & 1 \end{array} \right) $$
where we write
\begin{equation} \label{def-bk}    
	A^{[0]}=0, \qquad A^{[k]} = \sum_{i=0}^{k-1} A^i \mbox{ for } k \geq 1.
\end{equation}
We will use these facts without further comment.

\subsection{Construction of a simple skew brace}

From now on, $N$ denotes the group of order $p^p q$ constructed in Lemma \ref{constr-N}, where $M$ and $J$ are chosen as in Remark \ref{J-Jordan}. In particular, $J$ is the Jordan matrix (\ref{Jordan}). 
Let $e_1$, \ldots, $e_p$ be the standard basis of $V=\F_p^p$.

By Lemma \ref{Aut-N}, we can write elements of $\Hol(N)$ in the form $[R, \conj(S)]$ where 
$$ R =  \left( \begin{array}{c|c} M^k & v \\ \hline 0 & 1 \end{array} \right), \qquad 
   S=  \left( \begin{array}{c|c} A & w \\ \hline 0 & 1 \end{array} \right)  $$
with $0 \leq k \leq q-1$, $AMA^{-1}=M^s$ for some $s$, and $v$, $w \in V$.   
We include the notation $\conj$ in elements of $\Hol(N)$ to emphasise that the action of $S$ on $N$ is by conjugation rather than simply by matrix multiplication. Thus multiplication in $\Hol(N)$ is given by 
$$ [R_1, \conj(S_1)] [R_2, \conj(S_2) ] = [R_1 S_1 R_2 S_1^{-1}, \conj(S_1 S_2)]. $$

We consider the following elements of $\Hol(N)$:

$$ X = \left[  \left( \begin{array}{c|c} M  & 0 \\ \hline 0 & 1 \end{array} \right) , \conj \left( \begin{array}{c|c} I &  0 \\ \hline  0 & 1 \end{array} \right) \right], $$
$$ Y_v = \left[ \left( \begin{array}{c|c} I & v \\ \hline 0 & 1 \end{array} \right) , 
\conj \left( \begin{array}{c|c} I &  -v \\ \hline  0 & 1 \end{array} \right) \right] \mbox{ for each } v \in V, $$ 

$$ Z= \left[ \left( \begin{array}{c|c} I & e_p \\ \hline 0 & 1 \end{array} \right) , 
\conj \left( \begin{array}{c|c} J &  -e_p \\ \hline  0 & 1 \end{array} \right) \right]. $$
In particular, $Y_0=1$, the identity element of $\Hol(N)$.


We will record in Lemma \ref{G-rels} various relations satisfied by these elements. We first make some preliminary observations. 

Clearly for $i \geq 0$ we have 
\begin{equation} \label{Xi}
	X^i = \left[  \left( \begin{array}{c|c} M^i  & 0 \\ \hline 0 & 1 \end{array} \right) , \conj \left( \begin{array}{c|c} I &  0 \\ \hline  0 & 1 \end{array} \right) \right].
\end{equation}

To obtain a formula for $Z^k$, we first note that routine induction arguments give
\begin{equation} \label{J-k}
	 J^k e_p = \sum_{i=0}^k {k \choose i} e_{p-i}, 
\end{equation} \mbox{ and } 
\begin{equation} \label{J-bk} 
	  J^{[k]} e_p = \sum_{h=0}^{k-1} {k \choose h+1} e_{p-h} 
	    = k e_p + {k \choose 2} e_{p-1} + \cdots + e_{p+1-k}.
\end{equation}

\begin{proposition}  \label{Zk}
For $k \geq 0$, we have
 $$Z^k = \left[ \left( \begin{array}{c|c} I & J^{[k]}e_p \\ \hline 0 & 1 \end{array} \right) , 
\conj \left( \begin{array}{c|c} J^k  &  - J^{[k]}e_p \\ \hline  0 & 1 \end{array} \right) \right]. $$ 
\end{proposition}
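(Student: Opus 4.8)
The plan is to prove the formula by induction on $k$, using only the multiplication rule $[R_1, \conj(S_1)] [R_2, \conj(S_2)] = [R_1 S_1 R_2 S_1^{-1}, \conj(S_1 S_2)]$ in $\Hol(N)$ recorded above, together with the elementary recursion $J^{[k+1]} = J^{[k]} + J^k$, which is immediate from the definition (\ref{def-bk}). Note that the explicit expansions (\ref{J-k}) and (\ref{J-bk}) are not needed for this statement; the abstract recursion suffices (those closed forms will presumably be convenient later, when one pushes $k$ up to $p$ and invokes $J^p=I$).

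First I would check the base case $k=0$: since $J^0 = I$ and $J^{[0]} = 0$, both block matrices appearing in the asserted formula are the identity, so it reads $Z^0 = Y_0 = 1$, which holds.

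For the inductive step I would assume the formula for a given $k \geq 0$ and compute $Z^{k+1} = Z^k Z$. Writing
$$R_1 = \left( \begin{array}{c|c} I & J^{[k]}e_p \\ \hline 0 & 1 \end{array} \right), \qquad S_1 = \left( \begin{array}{c|c} J^k & -J^{[k]}e_p \\ \hline 0 & 1 \end{array} \right),$$
so that $Z^k = [R_1, \conj(S_1)]$, and taking $R_2 = \left( \begin{array}{c|c} I & e_p \\ \hline 0 & 1 \end{array} \right)$, $S_2 = \left( \begin{array}{c|c} J & -e_p \\ \hline 0 & 1 \end{array} \right)$ as the defining blocks of $Z$, the multiplication rule gives $Z^{k+1} = [R_1 S_1 R_2 S_1^{-1}, \conj(S_1 S_2)]$. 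For the automorphism component, $S_1 S_2 = \left( \begin{array}{c|c} J^{k+1} & -(J^k e_p + J^{[k]}e_p) \\ \hline 0 & 1 \end{array} \right)$, and $J^k e_p + J^{[k]}e_p = J^{[k+1]}e_p$ by the recursion, so this block is $\left( \begin{array}{c|c} J^{k+1} & -J^{[k+1]}e_p \\ \hline 0 & 1 \end{array} \right)$, as wanted. For the first component, since $J^{[k]}$ is a polynomial in $J$ it commutes with $J^{\pm k}$, and a short computation of $S_1 R_2 S_1^{-1}$ shows its linear part is $I$ and its vector part is $J^{[k]}e_p + J^k e_p - J^{[k]}e_p = J^k e_p$; left-multiplying by $R_1$ then adds $J^{[k]}e_p$, producing $J^{[k]}e_p + J^k e_p = J^{[k+1]}e_p$ once more. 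Assembling the two components yields exactly the claimed expression for $Z^{k+1}$, completing the induction.

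There is no substantive obstacle here: the statement is a routine unwinding of the $N \rtimes \Aut(N)$ structure of $\Hol(N)$. The only points requiring care are the sign bookkeeping in the affine (translation) parts and the fact that the second slot acts by conjugation — it is precisely the $-J^{[k]}e_p$ translation term of $S_1$ that cancels against its conjugate inside $S_1 R_2 S_1^{-1}$. If one wished to avoid even that mild conjugation calculation, one could instead verify the formula at the level of the permutation action, using that $[R, \conj(S)]$ sends $\mu \in N$ to $R S \mu S^{-1}$, but the direct induction above is the shortest route.
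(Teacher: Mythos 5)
Your proof is correct and follows essentially the same route as the paper: induction on $k$ via the multiplication rule in $\Hol(N)$, with the recursion $J^{[k+1]}=J^{[k]}+J^{k}$ doing the bookkeeping (the paper computes $Z\cdot Z^{k}$ where you compute $Z^{k}\cdot Z$, an immaterial difference). Your observation that the closed forms (\ref{J-k}) and (\ref{J-bk}) are not needed here but only later (e.g.\ to get $Z^{p}=Y_{e_1}$) is also accurate.
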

\begin{proof}
We again use induction: the stated formula holds for $k=0$, and if it holds for some $k \geq 0$, then
\begin{eqnarray*}
\lefteqn{	Z^{k+1} } \\
 & = &  \left[ \left( \begin{array}{c|c} I & e_p \\ \hline 0 & 1 \end{array} \right) , 
	\conj \left( \begin{array}{c|c} J &  -e_p \\ \hline  0 & 1 \end{array} \right) \right] \;
	\left[ \left( \begin{array}{c|c} I & J^{[k]}e_p \\ \hline 0 & 1 \end{array} \right) , 
	\conj \left( \begin{array}{c|c} J^k  &  - J^{[k]}e_p \\ \hline  0 & 1 \end{array} \right) \right] \\
	& = &  \left[ A , \conj \left( \begin{array}{c|c} J^{k+1} &  - J^{[k+1]} e_p \\ \hline  0 & 1 \end{array} \right) \right] 
\end{eqnarray*}
where
\begin{eqnarray*}
	A & = & \left( \begin{array}{c|c} I & e_p \\ \hline 0 & 1 \end{array} \right) 
	\left( \begin{array}{c|c} J &  -e_p \\ \hline  0 & 1 \end{array} \right)
	\left( \begin{array}{c|c} I & J^{[k]}e_p \\ \hline 0 & 1 \end{array} \right) 
	\left( \begin{array}{c|c} J^{-1} &  J^{-1} e_p \\ \hline  0 & 1 \end{array} \right) \\
	& = & 	\left( \begin{array}{c|c} J &  0 \\ \hline  0 & 1 \end{array} \right)
	\left( \begin{array}{c|c} J^{-1} & J^{-1}e_p + J^{[k]} e_p \\ \hline 0 & 1 \end{array} \right) \\
	& = & \left( \begin{array}{c|c} I & J^{[k+1]}e_p \\ \hline 0 & 1 \end{array} \right). 
\end{eqnarray*}
\end{proof}

\begin{lemma}  \label{G-rels}
The elements $X$, $Y_v$, $Z$ satisfy the following relations for $v$, $w \in V$:
\begin{itemize}
	\item[(i)] $X^q=1$, 
	\item[(ii)] $Z^p=Y_{e_1}$.
	\item[(iii)] $ZX=X^p Z$, 
	\item[(iv)] $Y_v X = X Y_v$,
	\item[(v)] $Y_v Y_w = Y_{v+w}$;
	\item[(vi)] $Z Y_v = Y_{Jv} Z$.
\end{itemize}
\end{lemma}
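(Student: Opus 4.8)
The plan is to verify all six relations by direct computation in $\Hol(N)$, using only the multiplication rule
$$ [R_1, \conj(S_1)] [R_2, \conj(S_2) ] = [R_1 S_1 R_2 S_1^{-1}, \conj(S_1 S_2)] $$
recorded above, the explicit forms of $X^i$ in (\ref{Xi}) and of $Z^k$ in Proposition \ref{Zk}, and the block-matrix identities for products and inverses already noted. For each relation one computes the $\Aut(N)$-component (a product $S_1S_2$ of block matrices) and the $N$-component ($R_1S_1R_2S_1^{-1}$) separately and compares with the claimed right-hand side. The guiding observation is that the ``twist'' placed in the automorphism parts of $X$, $Y_v$, $Z$ --- the entries $-v$ and $-e_p$ --- is exactly what makes the $N$-components collapse: since $(I\,|\,v;0\,|\,1)(I\,|\,-v;0\,|\,1)=I$, whenever a factor acts by $\conj(I\,|\,-v;0\,|\,1)$ the partial product $R_1S_1$ trivialises and the $N$-part of the product reduces to a plain matrix product.

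First I would dispose of (i): by (\ref{Xi}), $X^q=[(M^q\,|\,0;0\,|\,1),\conj(I\,|\,0;0\,|\,1)]$, which is the identity $Y_0$ because $M$ has order $q$. The relations (iv), (v), (vi) are then one-line semidirect-product computations: in (iv) and (v) the automorphism components are arranged so that the matrix part simplifies at once, yielding $Y_vX=[(M\,|\,Mv;0\,|\,1),\conj(I\,|\,-v;0\,|\,1)]=XY_v$ and $Y_vY_w=Y_{v+w}$; in (vi) one obtains $ZY_v=[(I\,|\,Jv+e_p;0\,|\,1),\conj(J\,|\,-Jv-e_p;0\,|\,1)]$ directly, and the identical expression emerges from $Y_{Jv}Z$. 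For (iii) the single external input is $JMJ^{-1}=M^p$ (Lemma \ref{centr-norm}(vi)): computing $ZX$ gives $N$-component $(JMJ^{-1}\,|\,JMJ^{-1}e_p;0\,|\,1)=(M^p\,|\,M^pe_p;0\,|\,1)$ and automorphism component $\conj(J\,|\,-e_p;0\,|\,1)$, and the same pair is what one obtains for $X^pZ$ using (\ref{Xi}).

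The only relation requiring a genuinely arithmetic step is (ii). By Proposition \ref{Zk}, $Z^p=[(I\,|\,J^{[p]}e_p;0\,|\,1),\conj(J^p\,|\,-J^{[p]}e_p;0\,|\,1)]$, and $J$ was chosen with $J^p=I$. It remains to evaluate $J^{[p]}e_p$ in $V=\F_p^p$: by (\ref{J-bk}) it equals $\sum_{h=0}^{p-1}{p \choose h+1}e_{p-h}$, and since $p$ is prime we have ${p \choose j}\equiv 0\pmod p$ for $0<j<p$, so every term vanishes except the one with $h=p-1$, giving $J^{[p]}e_p={p \choose p}e_1=e_1$. Hence $Z^p=[(I\,|\,e_1;0\,|\,1),\conj(I\,|\,-e_1;0\,|\,1)]=Y_{e_1}$. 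I do not anticipate any real obstacle: the work is entirely mechanical, the construction of $X$, $Y_v$, $Z$ makes every cancellation automatic, and the main thing to get right is the bookkeeping of the conjugation twist in the $N$-components together with the reduction of the binomial coefficients modulo $p$ in (ii).
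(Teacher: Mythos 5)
Your proof is correct and follows essentially the same route as the paper: relations (i)--(iii) are verified exactly as in the text (with (ii) resting on Proposition \ref{Zk}, $J^p=I$, and the vanishing of ${p\choose j}$ mod $p$ for $0<j<p$, which is precisely how (\ref{J-bk}) yields $J^{[p]}e_p=e_1$), and your one-line computations for (iv)--(vi) are the "similar calculations" the paper leaves to the reader, with the correct intermediate expressions.
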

\begin{proof}
(i) is immediate from (\ref{Xi}).

\noindent (ii) follows from the case $k=p$ of Proposition \ref{Zk} since $J^p=I$ and, using (\ref{J-bk}), $J^{[p]} e_p=e_1$. 

\noindent (iii) We calculate 
\begin{eqnarray*}
	Z X & = &  \left[ \left( \begin{array}{c|c} I & e_p \\ \hline 0 & 1 \end{array} \right) , 
	\conj \left( \begin{array}{c|c} J &  -e_p \\ \hline  0 & 1 \end{array} \right) \right] \;
	\left[ \left( \begin{array}{c|c} M & 0 \\ \hline 0 & 1 \end{array} \right) , 
	\conj \left( \begin{array}{c|c} I  & 0 \\ \hline  0 & 1 \end{array} \right) \right] \\
	& = &  \left[  \left( \begin{array}{c|c} I & e_p \\ \hline 0 & 1 \end{array} \right) 
	       \left( \begin{array}{c|c} J &  -e_p \\ \hline  0 & 1 \end{array} \right) 
	        \left( \begin{array}{c|c} M  & 0 \\ \hline  0 & 1 \end{array} \right)
	         \left( \begin{array}{c|c} J^{-1} &  J^{-1} e_p \\ \hline  0 & 1 \end{array} \right) ,
	         	\conj \left( \begin{array}{c|c} J  & -e_p \\ \hline  0 & 1 \end{array} \right) \right] \\
	& = &  \left[ \left( \begin{array}{c|c} JMJ^{-1} & JMJ^{-1}e_p \\ \hline 0 & 1 \end{array} \right) , 
	\conj \left( \begin{array}{c|c} J &  -e_p \\ \hline  0 & 1 \end{array} \right) \right]
\end{eqnarray*}
and, using (\ref{Xi}), 
\begin{eqnarray*}
	 X^p Z & = & 	\left[ \left( \begin{array}{c|c} M^p & 0 \\ \hline 0 & 1 \end{array} \right) , 
	 \conj \left( \begin{array}{c|c} I  & 0 \\ \hline  0 & 1 \end{array} \right) \right]  
	     \left[ \left( \begin{array}{c|c} I & e_p \\ \hline 0 & 1 \end{array} \right) , 
	\conj \left( \begin{array}{c|c} J &  -e_p \\ \hline  0 & 1 \end{array} \right) \right] \\
	& = &  \left[ \left( \begin{array}{c|c} M^p & M^pe_p \\ \hline 0 & 1 \end{array} \right) , 
	\conj \left( \begin{array}{c|c} J &  -e_p \\ \hline  0 & 1 \end{array} \right) \right].
\end{eqnarray*}
As $JMJ^{-1}=M^p$, we have $ZX =X^p Z$.

\noindent (iv), (v), (vi) are proved by similar calculations, which we leave to the reader. 	
\end{proof}
\begin{corollary}  \label{commutator}
	For each $v \in V$ we have $ZY_vZ^{-1}Y_v^{-1} = Y_{Jv-v}$. In particular, 
$$ Z Y_{e_i} Z^{-1} Y_{e_i}^{-1} = Y_{e_{i-1}} \mbox{ for } 2 \leq i \leq p.  $$
\end{corollary}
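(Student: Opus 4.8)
The plan is to derive this directly from the relations in Lemma \ref{G-rels}, since the statement is essentially a reformulation of part (vi) combined with the additivity in part (v). First I would use relation (vi), $ZY_v = Y_{Jv}Z$, to conclude that conjugation of $Y_v$ by $Z$ is given by $ZY_vZ^{-1} = Y_{Jv}$. Then I would right-multiply by $Y_v^{-1}$: by relation (v), the map $v \mapsto Y_v$ is a homomorphism from $(V,+)$, so $Y_v^{-1} = Y_{-v}$, and therefore $ZY_vZ^{-1}Y_v^{-1} = Y_{Jv}Y_{-v} = Y_{Jv-v}$, again by (v). This proves the first assertion.

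For the ``in particular'' claim, I would read off the action of $J$ on the standard basis from the explicit Jordan form (\ref{Jordan}): the $i$-th column of $J$ for $2 \leq i \leq p$ has entries $1$ in rows $i-1$ and $i$ and $0$ elsewhere, so $Je_i = e_{i-1} + e_i$, whence $Je_i - e_i = e_{i-1}$. Substituting $v = e_i$ into the formula just obtained gives $ZY_{e_i}Z^{-1}Y_{e_i}^{-1} = Y_{e_{i-1}}$ for $2 \leq i \leq p$.

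There is no real obstacle here: the corollary is a one-line consequence of Lemma \ref{G-rels}(v),(vi), and the only point requiring a moment's care is the bookkeeping of the $J$-action on the $e_i$, which is immediate from the chosen normalisation of $M$ and $J$ in Remark \ref{J-Jordan}. (One could alternatively invoke the $k=1$ case of (\ref{J-k}) to get $Je_p = e_{p-1} + e_p$ and the analogous identity for the other basis vectors, but quoting the matrix (\ref{Jordan}) directly is cleaner.)
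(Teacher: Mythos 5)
Your argument is correct and is exactly the intended one: the paper states the corollary without proof as an immediate consequence of Lemma \ref{G-rels}(v),(vi), and your derivation ($ZY_vZ^{-1}=Y_{Jv}$ from (vi), then $Y_{Jv}Y_{-v}=Y_{Jv-v}$ from (v), with $Je_i=e_{i-1}+e_i$ read off from the Jordan form (\ref{Jordan})) is precisely that consequence. Nothing is missing.
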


We now consider the subgroup
\begin{equation} \label{def-G}
	G=\langle X, Y_{e_{p-1}},Z\rangle 
\end{equation} 
of $\Hol(N)$.

\begin{notation}  \label{V0}
Let $V_0$ denote the $\F_p$-subspace of $V$ spanned by $e_1, \ldots, e_{p-1}$. 
\end{notation}

From (\ref{J-bk}) we have
\begin{equation}  \label{Jk-ep}
	J^{[k]}e_p - ke_p \in V_0 \mbox{ for } 0 \leq k \leq p-1. 
\end{equation}

\begin{lemma} \label{G-reg}
The group $G$ in (\ref{def-G}) is a regular subgroup of $\Hol(N)$. Moreover, every element $g$ of $G$ can be written uniquely in the form  
\begin{equation} \label{g1}
	 g = X^i Y_u Z^k \mbox{ with } 0 \leq i < q, \; u \in V_0, \; 0 \leq k < p, 
\end{equation}
or, equivalently, in the form
\begin{equation} \label{g2} 
	g = X^i Y_{e_1}^{k_1} \cdots Y_{e_{p-1}}^{k_{p-1}} Z^k \mbox{ with } 
	0 \leq i < q, \;  0\leq k_1, k_2, \ldots, k_{p-1}, k<p.
\end{equation}
\end{lemma}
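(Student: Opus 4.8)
The plan is to combine an elementary reduction using the relations of Lemma~\ref{G-rels} with a single explicit computation of how $G$ acts on the identity element $1_N$, from which regularity and the uniqueness of both normal forms will fall out together. First I would identify the generators of $G$ more usefully. Applying Corollary~\ref{commutator} repeatedly, starting from $Y_{e_{p-1}}\in G$, yields $Y_{e_{p-2}},Y_{e_{p-3}},\dots,Y_{e_1}\in G$; by relation~(v) this gives $Y_u\in G$ for all $u\in V_0$, so $G=\langle X,Y_{e_1},\dots,Y_{e_{p-1}},Z\rangle$. I would also record that $Y_{e_1}=Z^p$ is central in $G$: it commutes with $X$ by~(iv), with every $Y_w$ by~(v), and $ZY_{e_1}Z^{-1}=Y_{Je_1}=Y_{e_1}$ since $Je_1=e_1$. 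Finally, the Jordan shape of $J$ gives $Je_1=e_1$ and $Je_i=e_{i-1}+e_i$ for $2\le i\le p$, so $V_0$ is $J$-invariant.

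Next, existence of the form~(\ref{g1}). Given any word in $X^{\pm1}$, the $Y_{e_i}^{\pm1}$, and $Z^{\pm1}$, relations~(iii) and~(iv) let me move every occurrence of $X$ to the far left (it commutes with the $Y$'s, and conjugation by $Z^{\pm1}$ sends $X$ to a power of $X$ by~(iii)); relation~(vi), together with the $J$-invariance of $V_0$, then lets me move every occurrence of $Z$ to the far right, merging the $Y$-factors in the middle by~(v). The result is $X^iY_uZ^k$ with $u\in V_0$; relation~(i) reduces $i$ to $0\le i<q$, and, since $Z^p=Y_{e_1}$ is central, peeling off powers of $Y_{e_1}$ reduces $k$ to $0\le k<p$. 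Expanding $Y_u=Y_{e_1}^{k_1}\cdots Y_{e_{p-1}}^{k_{p-1}}$ with $0\le k_j<p$ converts~(\ref{g1}) into~(\ref{g2}). Thus the map $\phi\colon T\to G$, $(i,u,k)\mapsto X^iY_uZ^k$, is surjective, where $T=\{(i,u,k):0\le i<q,\ u\in V_0,\ 0\le k<p\}$ has $q\,p^{p-1}\,p=n$ elements.

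Finally, regularity and uniqueness in one stroke. Since $(\eta,\alpha)1_N=\eta$, the orbit map $\omega\colon G\to N$, $g\mapsto g\cdot 1_N$, sends $[R,\conj(S)]$ to its first component $R$, and $G$ is regular precisely when $\omega$ is a bijection. Using Proposition~\ref{Zk} and the product rule $[R_1,\conj(S_1)][R_2,\conj(S_2)]=[R_1S_1R_2S_1^{-1},\conj(S_1S_2)]$, and noting that the only conjugation affecting the first component here is of a translation matrix $\left(\begin{smallmatrix}I&*\\0&1\end{smallmatrix}\right)$ by another such matrix, which does nothing, I would compute
$$ (\omega\circ\phi)(i,u,k)=\left(\begin{array}{c|c} M^i & M^i\bigl(u+J^{[k]}e_p\bigr) \\ \hline 0 & 1 \end{array}\right). $$
Now $N$ has exactly $n$ elements, of shape $\left(\begin{smallmatrix}M^j&w\\0&1\end{smallmatrix}\right)$. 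Given such an element, $M^i=M^j$ forces $i=j$ (as $0\le i,j<q$); writing $M^{-i}w$ in the decomposition $V=V_0\oplus\F_p e_p$ and using $J^{[k]}e_p-ke_p\in V_0$ from~(\ref{Jk-ep}), the $e_p$-coordinate of $M^{-i}w$ pins down $k\in\{0,\dots,p-1\}$, and then $u=M^{-i}w-J^{[k]}e_p$ is the unique remaining choice, automatically $V_0$-valued. Since $|T|=|N|=n$, the map $\omega\circ\phi$ is a bijection. Injectivity of $\omega\circ\phi$ forces $\phi$ injective, hence bijective --- which is exactly the uniqueness in~(\ref{g1}), and then in~(\ref{g2}) via the bijection $(k_1,\dots,k_{p-1})\mapsto\sum k_je_j$ from $\{0,\dots,p-1\}^{p-1}$ onto $V_0$; and then $\omega$ is a surjection between two $n$-element sets, hence a bijection, so $G$ is regular.

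I do not expect a serious obstacle here: the substance is the one-line formula for $\omega\circ\phi$ and the observation that it is a bijection onto $N$. The points needing care are the bookkeeping in the rearrangement step --- in particular, using the centrality of $Y_{e_1}=Z^p$ to legitimise reducing the $Z$-exponent modulo $p$, and the $J$-invariance of $V_0$ to keep the $Y$-part inside $\{Y_u:u\in V_0\}$ --- and, in the last step, deducing $|G|=n$ and both uniqueness statements from the bijectivity of $\omega\circ\phi$ rather than trying to bound $|G|$ by a separate argument.
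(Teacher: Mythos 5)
Your proposal is correct and follows essentially the same route as the paper: generate the $Y_u$ for $u\in V_0$ via Corollary \ref{commutator} and relation (v), push every word into the form $X^iY_uZ^k$ using the relations (with $Z^p=Y_{e_1}$ to reduce the $Z$-exponent), compute the first component of $X^iY_uZ^k$ explicitly, and then use the count $q\cdot p^{p-1}\cdot p=|N|$ together with surjectivity onto $N$ to get regularity and uniqueness simultaneously. The only cosmetic difference is that you package the final step as bijectivity of the composite $\omega\circ\phi$, whereas the paper phrases it as $|G|\le|N|$ plus transitivity; these are the same argument.
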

\begin{proof}
We first consider the expressions (\ref{g1}) and (\ref{g2}). Applying Corollary \ref{commutator} repeatedly, with find that $Y_{e_{p-2}}, Y_{e_{p-3}}, \ldots, Y_{e_1} \in G$.
Using Lemma \ref{G-rels}(v), it follows that, for any element $u = k_1 e_1 + \cdots + k_{p-1} e_{p-1}$ of $V_0$, we have $Y_u = Y_{e_1}^{k_1} \cdots Y_{e_{p-1}}^{k_{p-1}} \in G$. This shows the equivalence of (\ref{g1}) and (\ref{g2}), and also that every element of the form (\ref{g1}) is in $G$. On the other hand, using the relations in Lemma \ref{G-rels}, we may rewrite the product of any two such elements in the form $X^i Y_v Z^k$ for some $i \geq 0$, $v \in V$ and $k \geq 0$. Since $Ju \in V_0$ for all $u \in V_0$, and $V_0$ is closed under addition, we will always have $v \in V_0$. By Lemma \ref{G-rels}(i), we can take $0 \leq i \leq q-1$. From Lemma \ref{G-rels}(ii), (v) we see that $Z$ has order $p^2$. However, if $k \geq p$, we may replace $Z^k$ by $Y_{e_1} Z^{k-p}$. Hence every element of $G$ can be written in the form (\ref{g1}). In particular, as the number of triples $(i,u,k)$ is $q \cdot p^{p-1} \cdot p$, we have $|G| \leq p^p q = |N|$.

A direct calculation using (\ref{Xi}) and Proposition \ref{Zk} shows the element $X^i Y_u Z^k$ in (\ref{g1}) can be written
\begin{equation}\label{XYZ}
	\left[ \left( \begin{array}{c|c} M^i & M^i (u + J^{[k]} e_p) \\ \hline 0 & 1 \end{array} \right) , 
	\conj \left( \begin{array}{c|c} J^k &  -u -J^{[k]} e_p \\ \hline  0 & 1 \end{array} \right) \right]. 
\end{equation}  	
For an arbitrary element 
$$ \eta = \left( \begin{array}{c|c} M^i & v \\ \hline 0 & 1 \end{array} \right) $$
of $N$, it follows from (\ref{Jk-ep}) that we may write $M^{-i} v = u + J^{[k]}e_p$ with $u \in V_0$ and $0 \leq k <p$. Then the element $ X^i Y_u Z^k$ of $G$ has $\eta$ as its first component. Since we  already know that $|G| \leq |N|$, this shows both that $G$ acts regularly on $N$, and that the  
expression (\ref{g1}) for each $g \in G$ is unique.
\end{proof} 

We now come to the central result of this paper. 

\begin{theorem} \label{simple}
Under the correspondence described in \S\ref{Hol-cor}, the subgroup $G$ of $\Hol(N)$ defined in (\ref{def-G}) gives a simple skew brace $(B, \cdot, \circ)$ of order $n=p^p q$ with $(B, \cdot) \cong N$ and $(B, \circ) \cong G$.
\end{theorem}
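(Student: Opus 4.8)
The plan is to verify that $(B,\cdot)\cong N$ and $(B,\circ)\cong G$ immediately from the correspondence of \S\ref{Hol-cor}, so that the whole content of the theorem is the \emph{simplicity} of $B$. By Definition \ref{def-left-ideal}, an ideal of $B$ is a subgroup $I$ of $(B,\cdot)$ which is normalised by all the $\lambda_a$ and which is normal in both $(B,\cdot)$ and $(B,\circ)$. Translating through the identification of $(B,\cdot)$ with $N$ and of $(B,\circ)$ with $G$, an ideal corresponds to a subset $I\subseteq N$ which is simultaneously a normal subgroup of $N$ and (via the bijection $\eta\mapsto g_\eta$) a normal subgroup of $G$. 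So the proof reduces to showing: the only subsets of $N$ that are normal subgroups of $N$ \emph{and} correspond to normal subgroups of $G$ are $\{1_N\}$ and $N$.

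First I would invoke Lemma \ref{N-subgroups}: the only nontrivial proper normal subgroup of $N$ is the Sylow $p$-subgroup $P$ (identified with $V$). Hence the only candidate for a nontrivial proper ideal is $I=V$ itself, and I must show that the corresponding subset of $G$ is \emph{not} normal in $G$. Using the explicit formula (\ref{XYZ}) for $X^iY_uZ^k$, the elements $g_\eta$ with $\eta\in V$ (i.e. with $M^i=I$, so $i=0$) are exactly those of the form $g_\eta = X^0 Y_u Z^k$ with $u + J^{[k]}e_p = v$, $u\in V_0$, $0\le k<p$; equivalently the preimage of $V$ in $G$ is $H:=\{Y_uZ^k : u\in V_0,\ 0\le k<p\}$, a subgroup of order $p^p$. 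So I must show $H$ is not normal in $G$. For this I conjugate $H$ by $X$: using relation (iii) $ZX=X^pZ$, equivalently $X^{-1}ZX = X^{1-p}Z = X^{q+1-p}Z$, and relation (iv) $XY_v=Y_vX$, one computes $X^{-1}(Y_uZ^k)X = Y_u X^{-1}Z^kX$, and $X^{-1}Z^kX = X^{k(1-p)}Z^k$ — but this lies in $H$ only if the power of $X$ is $\equiv 0\pmod q$, which forces $k(p-1)\equiv 0\pmod q$. Since $1\le p-1<q$ (as $q\mid (p^p-1)/(p-1)$ forces $q>p-1$) and $\gcd(p-1,q)=1$, this fails for $0<k<p$. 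Hence $X^{-1}Z X\notin H$, so $H$ is not normal in $G$, and therefore $V$ is not an ideal of $B$.

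Actually, to be careful I should double-check that no \emph{other} subset of $N$ could be an ideal: since an ideal is in particular a normal subgroup of $N$, and $P\cong V$ is the unique nontrivial proper one by Lemma \ref{N-subgroups}, the above exhausts all cases, and simplicity follows. The main obstacle — though a mild one — is organising the computation that the subgroup $H$ of $G$ corresponding to $V$ is not normalised by $X$, i.e. correctly manipulating the relations of Lemma \ref{G-rels} to see that $X^{-1}ZX$ involves a nontrivial power of $X$ that cannot be absorbed; the arithmetic point that makes it work is precisely that $p$ has multiplicative order $p$ (not $1$) mod $q$, so $p\not\equiv 1\pmod q$, which is exactly the hypothesis of Notation \ref{pq}. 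One could alternatively phrase the non-normality at the level of the brace: $\lambda_X$ or conjugation by $g_X$ in $(B,\circ)$ moves $V$ outside itself; but the cleanest route is the direct conjugation computation inside $\Hol(N)$ just sketched.
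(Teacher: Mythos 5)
Your proposal is correct and follows essentially the same route as the paper: both reduce simplicity via Lemma \ref{N-subgroups} to showing that the order-$p^p$ subgroup $\{Y_uZ^k : u\in V_0,\ 0\le k<p\}$ of $G$ (the preimage of $V$, which is the Sylow $p$-subgroup of $G$) is not normal, using the relation $ZX=X^pZ$. One small slip that does not affect the conclusion: from $ZX=X^pZ$ one gets $X^{-1}ZX=X^{p-1}Z$ (not $X^{1-p}Z$), and more generally $X^{-1}Z^kX=X^{p^k-1}Z^k$ rather than $X^{k(1-p)}Z^k$; in either form the exponent of $X$ is nonzero mod $q$ for $0<k<p$ because $p$ has order $p$ modulo $q$, so non-normality still follows.
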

\begin{proof}	
Since $G$ is a regular subgroup of $\Hol(N)$, it corresponds to a skew brace $B$ whose additive and multiplicative groups are as stated. To show that $B$ is simple, recall from Lemma \ref{N-subgroups} that the only nontrivial proper normal subgroup of $N$ is its (unique)  Sylow $p$-subgroup. Since an ideal in $B$ must be normal in both $(B,\cdot)$ and $(B,\circ)$, it follows that $B$ must be simple unless $G$ also has a normal (and hence unique) Sylow $p$-subgroup. However, it is immediate from Lemma \ref{G-rels} and Lemma \ref{G-reg}   
that 
$$ P = \{ Y_v Z^k : v \in V_0, \; 0 \leq k \leq p-1\} $$
is a subgroup of $G$ of order $p^p$, and it is not normal since $X^{-1}ZX=X^{p-1}Z$.
\end{proof}

\begin{remark}
The Sylow $p$-subgroup $P$ of $G$ has exponent $p^2$, and when $p>2$ it has derived length $2$ since its commutator subgroup is the elementary abelian group $\langle Y_{e_{p-2}}, Y_{e_{p-3}},  \ldots , Y_{e_1}\rangle$ of index $p^2$. On the other hand, it follows from Corollary \ref{commutator} that the nilpotency class of  $P$ is $p-1$, the maximum possible for a group of order $p^p$. Thus the structure of $P$ is in some sense extremal. Of course $P$ is abelian if and only if $p=2$. 
\end{remark}
 
\subsection{The opposite skew brace to $B$} \label{op-sect}

The skew brace $B$ constructed in Theorem \ref{simple} has an opposite skew brace $B^\opp$ and, as explained in Lemma \ref{op-lemma}, $B^\opp$ corresponds to a regular subgroup $G^*$ in $\Hol(N)$. Since $G^* \cong G$, the proof of Theorem \ref{simple} also shows that $B^\opp$ is a simple skew brace. For the sake of completeness, we now give explicit generators and relations for $G^*$, analogous to those already given for the regular subgroup $G$ corresponding to $B$. These will not be required in the rest of the paper. 

It is convenient to take the inverses of the generators of $G$ given above: we have
$$ G = \langle X^{-1}, Y_v^{-1}, Z^{-1} : v \in V_0\rangle  $$ 
where we calculate
$$ X^{-1} = \left[  \left( \begin{array}{c|c} M^{-1}  & 0 \\ \hline 0 & 1 \end{array} \right) , \conj \left( \begin{array}{c|c} I &  0 \\ \hline  0 & 1 \end{array} \right) \right], $$
$$ Y_u^{-1} = \left[ \left( \begin{array}{c|c} I & -v \\ \hline 0 & 1 \end{array} \right) , 
\conj \left( \begin{array}{c|c} I &  u \\ \hline  0 & 1 \end{array} \right) \right] \mbox{ for each } u \in V_0, $$ 
$$ Z^{-1} = \left[ \left( \begin{array}{c|c} I & -J^{-1} e_p \\ \hline 0 & 1 \end{array} \right) , 
\conj \left( \begin{array}{c|c} J^{-1} &  J^{-1} e_p \\ \hline  0 & 1 \end{array} \right) \right]. $$
Replacing each of the original generators $g$ of $G$ by the element $(g^{-1})^*$, where the operation $*$ is as in Lemma \ref{op-lemma}, we obtain the following set of generators for the group $G^*$. 

$$ X^* = \left[  \left( \begin{array}{c|c} M  & 0 \\ \hline 0 & 1 \end{array} \right) , \conj \left( \begin{array}{c|c} M^{-1} &  0 \\ \hline  0 & 1 \end{array} \right) \right], $$
$$ Y_u^* = \left[ \left( \begin{array}{c|c} I & u \\ \hline 0 & 1 \end{array} \right) , 
\conj \left( \begin{array}{c|c} I &  0 \\ \hline  0 & 1 \end{array} \right) \right] \mbox{ for each } u \in V_0, $$ 
$$ Z^* = \left[ \left( \begin{array}{c|c} I & J^{-1} e_p \\ \hline 0 & 1 \end{array} \right) , 
\conj \left( \begin{array}{c|c} J^{-1} &  0 \\ \hline  0 & 1 \end{array} \right) \right]. $$

Analogously to Lemma \ref{G-rels}, we find that the elements $X^*$, $Y_v^*$, $Z^*$ satisfy the following relations:
$$ {X^*}^q=1, \qquad  {Z^*}^p=Y^*_{e_1}, \qquad X^* Z^* =Z^* {X^*}^p, $$
$$   X^* Y^*_v  = Y^*_v X^*, \qquad  Y^*_v Y^*_w = Y^*_{v+w}, \qquad 
	Y^*_v Z^* = Z^* Y^*_{Jv}.  $$

\section{$B$ and $B^\opp$ are not isomorphic} \label{non-isom}

We have obtained two mutually opposite simple skew braces $B$, $B^\opp$ of order $n=p^p q$ corresponding to the regular subgroups $G$, $G^*$ in $\Hol(N)$ given in (\ref{def-G}) and \S\ref{op-sect} respectively. Our goal in this section is to verify that $B$ and $B^\opp$ are not isomorphic, so that we have indeed found two distinct simple skew braces of order $n$. Rather than working directly with the generators of $G^*$ given in \S\ref{op-sect}, we adopt a more indirect approach.

If $\Phi: B \to B^\opp$ is an isomorphism of skew braces, then 
$\Phi$ is simultaneously an automorphism of $(B,\circ)$ and an anti-automorphism of $(B,\cdot)$.
The anti-automorphisms of $(B,\cdot)$ are precisely the functions $b \mapsto \alpha(b)^{-1}$ for $\alpha \in \Aut(B,\cdot)$. We therefore look for an automorphism $\alpha$ of $(B,\cdot)$ such that the function 
$\Phi:B \to B$, $\Phi(b)=\alpha(b)^{-1}$, is an automorphism of $(B,\circ)$. By (\ref{def-circ}), this condition on $\Phi$ is equivalent to
\begin{equation} \label{circ-autom}
	g_{\eta} g_{\eta'} = g_\mu \Rightarrow g_{\Phi(\eta)} g_{\Phi(\eta')} = g_{\Phi(\mu)} \mbox{ for } \eta, \eta', \mu \in N,
\end{equation}
where, as before, $g_\eta$ denotes the unique element of $G$ with first component $\eta$.
The bijection $g_\eta \mapsto \eta$ from $G$ to $N$ is described explicitly by (\ref{XYZ}) in the proof of Lemma \ref{G-reg}, and its inverse is described implicitly in that proof. To make the latter description explicit, we introduce the following notation.

\begin{definition} \label{kap-pi}
Define $\kappa: V \to \F_p$ by
$$ \kappa( k_1 e_1 + \cdots + k_p e_p) = k_p \mbox{ for } k_1, \ldots, k_p \in \F_p. $$
For $v \in V$, let $\tkap(v)$ to be the unique integer with $0 \leq \tkap(v) \leq p-1$ whose residue class in $\F_p=\Z/p\Z$ is $\kappa(v)$. 

We then have $v-\tkap(v) e_p = v-\kappa(v) e_p \in V_0$, so also $v-J^{[\tkap(v)]} e_p \in V_0$ by (\ref{Jk-ep}). We further define
$\pi: V \to V_0$ by 
$$ \pi(v) =  v-J^{[\tkap(v)]} e_p. $$  
\end{definition}
\begin{remark}
We note that $\kappa$ is a linear map, but $\pi$ is linear only when $p=2$. The distinction between $\kappa$ and $\tkap$ matters since $J^{[p]} \neq J^{[0]}$, and also since $Z$ has order $p^2$. 
\end{remark}

Now by (\ref{XYZ}), if $i \in \Z$, $u \in V_0$ and $0 \leq k <p$, we have 
\begin{equation} \label{G-to-N}
	g_\eta = X^i Y_u Z^k \Rightarrow \eta = \left( \begin{array}{c|c} M^i & M^i(u +J^{[k]} e_p) \\ \hline 0 & 1 \end{array} \right), 
\end{equation}
and conversely, for any $i \in \Z$ and $v \in V$, we have 
\begin{equation} \label{N-to-G}
	 \eta =  \left( \begin{array}{c|c} M^i & M^i v\\ \hline 0 & 1 \end{array} \right) 
	 \Rightarrow g_\eta = X^i Y_{\pi(v)} Z^{\tkap(v)}.
\end{equation}	 
	 
By Lemma \ref{Aut-N}, the automorphism $\alpha$ of $(B,\cdot) =N$ is given by conjugation by a matrix of the form 
\begin{equation}\label{alpha-aut}
		  \left( \begin{array}{c|c} A & w \\ \hline 0 & 1 \end{array} \right) 
\end{equation}
where $w \in V$ and $A$ satisfies $AMA^{-1}=M^s$. By Lemma \ref{centr-norm}(vii), we have $s=p^j$ with $0 \leq j \leq p-1$. 

\begin{lemma} \label{g-Phi}
Let $\eta \in N$, and write $g_\eta = X^i Y_u Z^k$ with $i \in \Z$, $u \in V_0$, $0 \leq k<p$. Then 
$$ g_{\alpha(\eta)} = X^{is} Y_{\pi(v)}Z^{\tkap(v)} $$
where $v=(M^{-is}-I)w + A(u+J^{[k]}e_p)$, and
$$ g_{\Phi(\eta)} = g_{\alpha(\eta)^{-1}} = X^{-is} Y_{\pi(y)} Z^{\tkap(y)} $$
with $y= (M^{is}-I)w -AM^i(u+J^{[k]} e_p)$.
\end{lemma}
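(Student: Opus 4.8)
The plan is to prove both formulae by a direct matrix computation, starting from the explicit descriptions (\ref{G-to-N}) and (\ref{N-to-G}) of the bijection $\eta \mapsto g_\eta$ and from the description (\ref{alpha-aut}) of $\alpha$.

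\emph{Computing $g_{\alpha(\eta)}$.} By (\ref{G-to-N}), $\eta$ is the block matrix with top-left block $M^i$ and top-right block $M^i(u+J^{[k]}e_p)$. Conjugating by the matrix in (\ref{alpha-aut}) and multiplying out the three block matrices, one finds that $\alpha(\eta)$ has top-left block $AM^iA^{-1}$ and top-right block $(I-AM^iA^{-1})w + AM^i(u+J^{[k]}e_p)$. Since $AMA^{-1}=M^s$ we have $AM^iA^{-1}=M^{is}$, and the same relation gives $AM^i=M^{is}A$, hence $M^{-is}\bigl(AM^i(u+J^{[k]}e_p)\bigr)=A(u+J^{[k]}e_p)$. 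Therefore $\alpha(\eta)$ is the block matrix with top-left block $M^{is}$ and top-right block $M^{is}v$, where $v=(M^{-is}-I)w+A(u+J^{[k]}e_p)$, and (\ref{N-to-G}), applied with exponent $is$ in place of $i$, gives $g_{\alpha(\eta)}=X^{is}Y_{\pi(v)}Z^{\tkap(v)}$.

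\emph{Computing $g_{\Phi(\eta)}$.} From the block form of $\alpha(\eta)$ just found, $\alpha(\eta)^{-1}$ has top-left block $M^{-is}$ and top-right block $-v=M^{-is}(-M^{is}v)$, so it is of the shape appearing in (\ref{N-to-G}) with exponent $-is$ and with $y=-M^{is}v$. Expanding $y$ and using $M^{is}A=AM^i$ once more yields $y=(M^{is}-I)w-AM^i(u+J^{[k]}e_p)$, and a final application of (\ref{N-to-G}) gives $g_{\Phi(\eta)}=g_{\alpha(\eta)^{-1}}=X^{-is}Y_{\pi(y)}Z^{\tkap(y)}$.

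The argument is essentially bookkeeping, so I do not expect a genuine obstacle; the two points that need care are that exponents of $M$ are meaningful only modulo $q$ while the data encoded by $J^{[k]}$ and by $Z^{\tkap(v)}$ lives modulo $p$ (respectively $p^2$), so that $is$ must be left unreduced when invoking (\ref{N-to-G}); and that the commutation relation $AM^i=M^{is}A$ is applied at exactly the two indicated places, which is what forces the coefficients of $w$ and of $u+J^{[k]}e_p$ into the stated form. The nonlinearity of $\pi$ and $\tkap$ is irrelevant here, since $v$ and $y$ are substituted into them directly.
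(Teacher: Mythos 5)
Your proof is correct and follows essentially the same route as the paper: both arguments conjugate the block matrix for $\eta$ from (\ref{G-to-N}) by the matrix in (\ref{alpha-aut}), use $AM^i=M^{is}A$ to factor the translation part as $M^{is}v$ (respectively $M^{-is}y$), and then read off $g_{\alpha(\eta)}$ and $g_{\Phi(\eta)}$ from (\ref{N-to-G}). Your side remark that $is$ must be kept unreduced when feeding the translation part into $\pi$ and $\tkap$ is a sensible precaution, consistent with (\ref{N-to-G}) being stated for arbitrary $i\in\Z$.
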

\begin{proof}
As $\eta$ is given by (\ref{G-to-N}), we calculate that 
\begin{eqnarray*}
	 \alpha(\eta) & = & \left( \begin{array}{c|c} M^{is} & (I-M^{is})w +AM^i (u+J^{[k]}e_p) \\ \hline 0 & 1 \end{array} \right) \\
  & = & \left( \begin{array}{c|c} M^{is} & M^{is}\big( (M^{-is}-I)w +A (u+J^{[k]}e_p)\big) \\ \hline 0 & 1 \end{array} \right) .
\end{eqnarray*}
Here we have used that $AM^i=M^{is}A$. The expression for $g_{\alpha(\eta)}$ then follows using (\ref{N-to-G}). Taking the inverse, we obtain
\begin{eqnarray*}
\Phi(\eta) & = & \left( \begin{array}{c|c} M^{-is} & - (M^{-is}-I)w -A(u+J^{[k]}e_p) \\ \hline 0 & 1 \end{array} \right)  \\
 & = & \left( \begin{array}{c|c} M^{-is} &  M^{-is}\big( (M^{is}-I)w -M^{is}A(u+J^{[k]}e_p) \big) \\ \hline 0 & 1 \end{array} \right),
\end{eqnarray*}   
giving the expression for $g_{\Phi(\eta)}$.
\end{proof}

\begin{theorem}   \label{no-isom}
The simple skew brace $B$ given in Theorem \ref{simple} is not isomorphic to its opposite skew brace $B^\opp$. 
\end{theorem}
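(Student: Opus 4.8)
The plan is to suppose, for contradiction, that an isomorphism $\Phi:B\to B^\opp$ exists. As noted in the text, $\Phi$ must then be of the form $\Phi(b)=\alpha(b)^{-1}$ for some $\alpha\in\Aut(B,\cdot)=\Aut(N)$, and by Lemma \ref{Aut-N} the automorphism $\alpha$ is conjugation by a block matrix $\left(\begin{array}{c|c} A & w \\ \hline 0 & 1\end{array}\right)$ with $AMA^{-1}=M^s$, $s=p^j$ for some $0\le j\le p-1$ by Lemma \ref{centr-norm}(vii). The key constraint is (\ref{circ-autom}): $\Phi$ must send the multiplication table of $G$ to itself, i.e.\ $g_\eta g_{\eta'}=g_\mu$ implies $g_{\Phi(\eta)}g_{\Phi(\eta')}=g_{\Phi(\mu)}$. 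I would extract consequences of this by feeding in well-chosen pairs $\eta,\eta'$ where the group law in $G$ is easy to compute from the relations of Lemma \ref{G-rels}, and comparing with the explicit formula for $g_{\Phi(\eta)}$ provided by Lemma \ref{g-Phi}.

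The first step is to look at the action of $\Phi$ on the element $X$ (i.e.\ $\eta$ with $g_\eta=X$, so $i=1$, $u=0$, $k=0$). From Lemma \ref{g-Phi}, $g_{\Phi(X^i)}=X^{-is}Y_{\pi(y_i)}Z^{\tkap(y_i)}$ for suitable $y_i$; iterating (\ref{circ-autom}) on the relation $X^q=1$ forces $q\mid (-s)\cdot(\text{something})$, but more usefully, looking at powers of $X$ shows that $\Phi$ permutes the cyclic subgroup generated by $X$ (the unique subgroup of $G$ of order $q$, which is characteristic), and in fact $g_{\Phi(X)}=X^{-s}$ up to adjusting $w$. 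Next, and this is the heart of the argument, I would examine how $\Phi$ interacts with $Z$ and the relation $ZX=X^pZ$ from Lemma \ref{G-rels}(iii). Applying $\Phi$ to $g_\eta=Z$ (so $i=0,u=0,k=1$) gives via Lemma \ref{g-Phi} an element $g_{\Phi(Z)}$ with $X$-exponent $0$, i.e.\ of the form $Y_{u'}Z^{k'}$, lying in the Sylow $p$-subgroup $P$ of $G$. The relation $ZX=X^pZ$ must be preserved: $\Phi(Z)\Phi(X)=\Phi(X)^p\Phi(Z)$ in $G$, which after translating through (\ref{circ-autom}) — being careful that $\Phi$ reverses the additive operation but preserves $\circ$ — forces a compatibility between the conjugation action of $\Phi(Z)$ on $\Phi(X)$ and the exponent $p$. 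Using $ZX^jZ^{-1}=X^{pj}$ and the way $Z^{k'}$ acts, one finds $p^{k'}\equiv p^{-s}\pmod q$ or similar, which combined with the fact that $p$ has multiplicative order exactly $p$ mod $q$ pins down $k'$ and $j$ modulo $p$.

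The decisive obstruction should come from comparing $Z^p$ on both sides. In $G$ we have $Z^p=Y_{e_1}\neq 1$ (Lemma \ref{G-rels}(ii)), so $Z$ has order $p^2$; but the opposite-side image $g_{\Phi(Z)}=Y_{u'}Z^{k'}$ has order dividing $p^2$, and computing $(Y_{u'}Z^{k'})^p$ using relation (vi) $ZY_v=Y_{Jv}Z$ and the binomial-type formula for $J^{[p]}$ should yield either an element of order $p$ (if $p\nmid k'$ this gives back something in $\langle Z^p\rangle=\langle Y_{e_1}\rangle$ — fine) or the identity. The real tension is in tracking the "defect" vectors: Lemma \ref{g-Phi} shows $\Phi$ twists the $V_0$-component by $\pi$ and the $Z$-exponent by $\tkap$, and these are \emph{not} linear (Remark after Definition \ref{kap-pi}) except when $p=2$. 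I expect the argument to hinge on a parity/counting statement: the map $v\mapsto$ ($Z$-exponent of $g_{\Phi(\eta)}$) composed appropriately cannot simultaneously be multiplicative enough to respect $\circ$ and compatible with the nonlinear correction $\tkap$ unless one reaches a numerical contradiction, e.g.\ forcing $s\equiv -s$, hence $q\mid 2s$, hence (as $q$ is odd and coprime to $p$) $q\mid 2$, impossible; or alternatively forcing $p^{2j}\equiv 1\pmod q$ with $0<j<p$, contradicting that $p$ has order exactly $p$ modulo $q$.

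The main obstacle I anticipate is bookkeeping rather than conceptual: one must carry the vector $w\in V$ and the matrix $A$ (only constrained up to the centraliser/normaliser data of Lemma \ref{centr-norm}) through several applications of (\ref{circ-autom}), and show that \emph{no} choice of $(A,w,j)$ makes $\Phi$ multiplicative for $\circ$. A clean way to organise this is to first reduce to $j$ and the "linear part" of $\alpha$ by showing the $V_0$-components can always be absorbed, then derive a purely numerical equation in $s=p^j$ modulo $q$ and a separate equation modulo $p$ coming from the $Z$-relations, and exhibit their incompatibility. The case $p=2$ (where $\pi$ is linear) may need separate, easier handling, matching the known fact that the two skew braces of order $12$ are non-isomorphic.
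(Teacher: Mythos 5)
Your setup is the same as the paper's: reduce an isomorphism $B\to B^\opp$ to a map $\Phi(b)=\alpha(b)^{-1}$ with $\alpha$ as in (\ref{alpha-aut}), impose (\ref{circ-autom}), and first test it on powers of $X$. That first test does work essentially as you describe: comparing $X$-exponents forces $\tkap\big((M^{is}-I)w\big)=0$ for all $i$, whence $(M^{is}-I)w\in V_0$ for all $i$ and $w=0$ by Lemma \ref{centr-norm}(iii) (one \emph{proves} $w=0$; it is not merely an adjustment). But the decisive step you propose does not close. Carrying $\Phi$ through the relation $ZX=X^pZ$ gives $g_{\Phi(Z)}=Y_{u'}Z^{k'}$ and the condition $-sp^{k'}\equiv -sp \pmod q$, which merely forces $k'=1$; similarly $\Phi(Z)^p=\Phi(Z^p)$ forces $Ae_1=-e_1$, which is satisfied e.g.\ by $A=-J^j$. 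Neither of your anticipated contradictions ($s\equiv -s$, or $p^{2j}\equiv 1\pmod q$ with $0<j<p$) arises. Indeed they cannot: the groups $G$ and $G^*$ are isomorphic as abstract groups (Lemma \ref{op-lemma}), and the genuine skew brace automorphism of $B$ exhibited in Theorem \ref{automs} (with $s=p$, $j=1$) passes through exactly the same $X$--$Z$ relation bookkeeping, so any purely numerical equation in $s$ modulo $q$ or modulo $p$ extracted from those relations must be solvable. The nonlinearity of $\pi$ and $\tkap$ is likewise not where the obstruction lives, and no separate treatment of $p=2$ is needed.

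The missing idea is to test (\ref{circ-autom}) on the commuting pair $Y_u$ and $X$ for arbitrary $u\in V_0$: preserving $Y_uX=XY_u$ forces $\tkap(-Au)=0$ and $\tkap(-AMu)=0$ for all $u\in V_0$, i.e.\ $AV_0=V_0$ and $AMV_0=V_0$, hence $M^sV_0=AMA^{-1}V_0=V_0$. This makes the $(p-1)$-dimensional subspace $V_0$ invariant under $M^s$, contradicting Lemma \ref{centr-norm}(i) (irreducibility of $V$ as an $\F_p[M]$-module). Without some step of this kind --- one that detects the \emph{position} of the Sylow $p$-part of $G$ relative to the $K$-linear structure on $V$, rather than the abstract relations of $G$ --- your argument cannot reach a contradiction.
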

\begin{proof}
Suppose for a contradiction that the skew braces $B$ and $B^\opp$ are isomorphic. Then, for some choice of $A$ and $w$ in (\ref{alpha-aut}), the corresponding function $\Phi$ is an automorphism of $(N,\circ)$. Thus (\ref{circ-autom}) holds.

We will make two applications of (\ref{circ-autom}). 
First, take $\eta$, $\eta'$, $\mu$ so that $g_\eta=X^i$, $g_{\eta'}=X$, $g_\mu=X^{i+1}$ for $i \in \Z$. For brevity, we set $\pi(i)=\pi\big( (M^{is}-I)w\big)$ and $\tkap(i)=\tkap\big( (M^{is}-I)w \big)$. 
Then by Lemma \ref{g-Phi} we have $g_{\Phi(\eta)} = X^{-is} Y_{\pi(i)} Z^{\tkap(i)}$, with similar expressions for $g_{\Phi(\eta')}$ and $g_{\Phi(\mu)}$. Hence by (\ref{circ-autom}), we have 
$$ X^{-is} Y_{\pi(i)} Z^{\tkap(i)}  X^{-s} Y_{\pi(1)} Z^{\tkap(1)}  =  X^{-(i+1)s} Y_{\pi(i+1)} Z^{\tkap(i+1)}. $$ 
Using the relations $ZX=X^pZ$, $ZY_u = Y_{Ju} Z$ and $Y_u X =X Y_u$ in Lemma \ref{G-rels}, 
and writing $k=\tkap(i)$, this becomes
$$ X^{-is -sp^k} Y_{\pi(i)+J^k\pi(1)} Z^{k+\tkap(1)} =   X^{-(i+1)s} Y_{\pi(i+1)} Z^{\tkap(i+1)}. $$ 
Comparing exponents on $X$, we find that
$$ -is-s p^k \equiv  -(i+1)s \pmod{q}, $$
so $k=\tkap(i)=0$. 
This means that $(M^{is}-I)w \in V_0$ for all $i \in \Z$. By Lemma \ref{centr-norm}(iii) (applied to $M^s$ in place of $M$), we therefore have $w=0$. Thus the formula for $g_{\Phi(\eta)}$ in Lemma \ref{g-Phi} simplifies to 
$$ g_\eta=X^i Y_u Z^k \Rightarrow g_{\Phi(\eta)} = X^{-is} Y_{\pi(-AM^i (u+J^{[k]}e_p))} Z^{\tkap(-A M^i (u+J^{[k]}e_p))}. $$

For our second application of (\ref{circ-autom}), let $g_\eta=Y_u$, $g_\eta'=X$, $g_\mu=Y_u X = X Y_u$ for arbitrary $u \in V_0$. We have 
$$ g_{\Phi(\eta)} = Y_{\pi(-Au)} Z^{\tkap(-Au)}, $$
$$  g_{\Phi(\eta')} = X^{-s}, $$
$$ g_{\Phi(\mu)} = X^{-s} Y_{\pi(-AMu)} Z^{\tkap(-AMu)}. $$
Hence we have the equation
$$  Y_{\pi(-Au)} Z^{\tkap(-Au)}  X^{-s}  = 
   X^{-s} Y_{\pi(-AMu)} Z^{\tkap(-AMu)}  $$
in $G$. Writing $h=\tkap(-Au)$, we therefore have
$$ X^{-sp^h} Y_{\pi(-Au)} Z^h =   X^{-s} Y_{\pi(-AMu)} Z^{\tkap(-AMu)}. $$
Comparing exponents on $X$, we see that $h=0$, and comparing exponents on $Z$, we also find that $\tkap(-AMu)=0$. Hence $-Au \in V_0$ and $-AMu\in V_0$ for all $u \in V_0$. It follows that $M^s V_0 = (-AM)(-A) V_0=V_0$. But by Lemma \ref{centr-norm}(i), there are no $M^s$-invariant subspaces of $V$ except $0$ and $V$, so we have obtained the required contradiction.
\end{proof}

\section{Automorphisms}

It is of interest to determine the group $\Autsb(B)$ of automorphisms of the simple skew braces we have constructed. Since we can do this by a similar argument to Theorem \ref{no-isom}, we include the result here. As observed in \S\ref{opposite}, we automatically have $\Autsb(B^\opp) \cong \Autsb(B)$.

\begin{theorem}  \label{automs}
Let $B$ be the simple skew brace in Theorem \ref{simple}. Then $\Autsb(B)$ has order $p$ and is generated by the automorphism of $(B,\cdot)=N$ given by conjugation by the block matrix 
$$ \left( \begin{array}{c|c} J & 0 \\ \hline 0 & 1 \end{array} \right). $$
\end{theorem}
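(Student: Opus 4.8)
The plan is to adapt the method of proof of Theorem~\ref{no-isom}. A skew brace automorphism of $B$ is a function $\Phi$ that is simultaneously an automorphism of $(B,\circ)$ and an automorphism of $(B,\cdot)=N$. So, in contrast to the previous section where $\Phi(b)=\alpha(b)^{-1}$ for $\alpha\in\Aut(N)$, here we simply take $\Phi=\alpha$ to be an automorphism of $N$ and impose that it also respects $\circ$, i.e.\ condition (\ref{circ-autom}). By Lemma~\ref{Aut-N}, $\alpha$ is conjugation by a block matrix $\left(\begin{array}{c|c} A & w \\ \hline 0 & 1\end{array}\right)$ with $AMA^{-1}=M^s$, $s=p^j$. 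Using (\ref{G-to-N}) and (\ref{N-to-G}) I would first derive, exactly as in Lemma~\ref{g-Phi} but without the final inversion step, the formula
$$ g_\eta = X^i Y_u Z^k \;\Rightarrow\; g_{\alpha(\eta)} = X^{is} Y_{\pi(v)} Z^{\tkap(v)}, \qquad v=(M^{-is}-I)w + A(u+J^{[k]}e_p). $$

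Next I would run the two applications of (\ref{circ-autom}) used in Theorem~\ref{no-isom}. The first, with $g_\eta=X^i$, $g_{\eta'}=X$, $g_\mu=X^{i+1}$, gives on comparing $X$-exponents that $is+sp^{\tkap(i)}\equiv(i+1)s\pmod q$ where $\tkap(i)=\tkap((M^{is}-I)w)$, forcing $p^{\tkap(i)}\equiv 1\pmod q$; since $p$ has order $p$ mod $q$ and $0\le\tkap(i)<p$, this gives $\tkap(i)=0$, so $(M^{is}-I)w\in V_0$ for all $i$, whence $w=0$ by Lemma~\ref{centr-norm}(iii). The second application, with $g_\eta=Y_u$, $g_{\eta'}=X$, $g_\mu=XY_u$, similarly forces $Au\in V_0$ and $AMu\in V_0$ for all $u\in V_0$, hence $M^sV_0=V_0$; but by Lemma~\ref{centr-norm}(i) the only $M^s$-invariant subspaces are $0$ and $V$, and $M^s$ is still a matrix of order $q$. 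This is a contradiction \emph{unless} $V_0$ is not actually required to be invariant --- the resolution is that we must have $s=1$, i.e.\ $j=0$ and $A$ centralises $\langle M\rangle$. So after these two steps I will have reduced to $w=0$, $s=1$, and $A\in\F_p[M]^\times$ a matrix commuting with $M$.

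With $A$ now commuting with $M$ and $w=0$, the formula simplifies to $g_\eta=X^iY_uZ^k\Rightarrow g_{\alpha(\eta)}=X^iY_{\pi(Av)}Z^{\tkap(Av)}$ with $v=u+J^{[k]}e_p$. I would then extract strong constraints on $A$ by feeding in more relations. Applying (\ref{circ-autom}) to the relation $Z^p=Y_{e_1}$ (i.e.\ $g_\eta=g_{\eta'}=\cdots=Z$, composed $p$ times, equals $Y_{e_1}$), and to $ZY_{e_i}Z^{-1}Y_{e_i}^{-1}=Y_{e_{i-1}}$ from Corollary~\ref{commutator}, should pin down how $A$ acts on the basis vectors. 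Concretely, the image of $Z$ under $\Phi$ must be an element $g'$ of $G$ with $g'^p=Y_{\Phi\text{-image of }e_1}$ and conjugating the $Y_{e_i}$'s correctly; tracking this through forces $A$ to act on $V_0$ compatibly with the $J$-action, and comparing $\kappa$-components (using $\kappa(Av)$ vs.\ $\kappa(v)$) forces $\kappa\circ A=\kappa$, i.e.\ the last row of $A$ is $(0,\dots,0,1)$. Since $A\in\F_p[M]^\times$ and commutes with $J$ up to the constraints obtained --- here I expect the key point to be that the subgroup of $\Aut(N)$ preserving $\circ$ is exactly $\langle\conj(J)\rangle$ --- I would verify directly that $\conj\left(\begin{array}{c|c} J & 0 \\ \hline 0 & 1\end{array}\right)$ does satisfy (\ref{circ-autom}), using the relations in Lemma~\ref{G-rels} (note $J$ normalises $\langle M\rangle$ with $JMJ^{-1}=M^p$, so $X\mapsto X^p$, $Y_v\mapsto Y_{Jv}$, $Z\mapsto$ something of the form $Y_uZ$), that it has order $p$ (since $J^p=I$ and $\conj(J)$ acts nontrivially on $X$), and that nothing else works.

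The main obstacle I anticipate is the last reduction: after forcing $w=0$, $s=1$, $A\in\F_p[M]^\times$, I must show $A$ induces the same automorphism of $(B,\circ)$ only when $\conj(A)\in\langle\conj(J)\rangle\cdot\{\text{identity-on-}B\text{ maps}\}$, which requires carefully analysing which $A$ commuting with $M$ can conjugate $G$ into itself. Since $\conj(A)$ for $A\in\F_p[M]^\times$ acts trivially on $P$ by scalars only when $A$ is scalar, the relation-chasing via $Z^p=Y_{e_1}$ and the commutator identities is essential to rule out the non-scalar $A$'s that are not powers of $J$. Counting the skew brace automorphisms against $|\langle J\rangle|=p$ should then close the argument: I expect to show the map $\Aut_{\mathrm{sb}}(B)\to\langle\phi\rangle=\Gal(K/\F_p)$, recording the action of $A$ on $\langle M\rangle$ (which we showed must be trivial!) --- no, rather, the right invariant is $j$ such that $AMA^{-1}=M^{p^j}$, which we already forced to be $0$; the remaining freedom is measured by the $\kappa$-shift induced by the $Z^k$ part, and this is cyclic of order $p$ generated by $\conj(J)$.
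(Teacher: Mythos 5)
Your overall strategy --- drop the inversion from the argument of Theorem \ref{no-isom} and impose (\ref{circ-autom}) directly on an automorphism $\alpha$ of $N$ --- is the right one, and your first application (forcing $w=0$) is essentially correct. But your second step contains a genuine error that derails the rest. With $g_\eta=Y_u$, $g_{\eta'}=X$, $g_\mu=XY_u=X^1Y_uZ^0$ and $w=0$, the analogue of Lemma \ref{g-Phi} gives $g_{\alpha(\mu)}=X^{s}Y_{\pi(Au)}Z^{\tkap(Au)}$: the matrix $M$ never enters, because there is no inversion to produce the factor $M^i$ that appeared in $y=(M^{is}-I)w-AM^i(u+J^{[k]}e_p)$ in the opposite-brace setting. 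So this relation only yields $Au\in V_0$, not $AMu\in V_0$, and there is no contradiction to resolve. Your ``resolution'' $s=1$ is therefore unsupported --- and it is also false: the generator of $\Autsb(B)$ is conjugation by $J$ with $JMJ^{-1}=M^p$, i.e.\ $s=p$, so restricting to $A$ centralising $\langle M\rangle$ excludes exactly the automorphism the theorem asserts to exist. (Your closing paragraph betrays this tension: you ``forced'' $j=0$ yet claim the answer is generated by $\conj(J)$, which has $j=1$.)

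The fix, which is what the paper does, is to apply (\ref{circ-autom}) to $g_\eta=Y_uZ^k$ with $g_{\eta'}=X$ for \emph{all} $0\le k<p$, not just $k=0$. Comparing exponents on $X$ in $g_{\alpha(\eta)}g_{\alpha(\eta')}=g_{\alpha(\mu)}$ then gives $\tkap\bigl(A(u+J^{[k]}e_p)\bigr)=k$, which packages two facts at once: $AV_0=V_0$ (case $k=0$) and $(A-I)e_p\in V_0$. Writing $A=BJ^{-j}\cdot J^{j}$ with $B=AJ^{-j}$ centralising $\langle M\rangle$ (Lemma \ref{centr-norm}(vii)) and using $JV_0=V_0$, one gets $BV_0=V_0$, so $B=\lambda I$ by Lemma \ref{centr-norm}(v), and the condition $(A-I)e_p\in V_0$ together with $(J-I)e_p=e_{p-1}\in V_0$ forces $\lambda=1$, i.e.\ $A=J^j$. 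Your proposed detour through $Z^p=Y_{e_1}$ and the commutator identities is not needed for this reduction, and as written it rests on the false premise that $A$ commutes with $M$. Finally, the verification that $\conj(J)$ really satisfies (\ref{circ-autom}) requires checking the product formula on arbitrary elements $X^iY_uZ^k$ and $X^{i'}Y_{u'}Z^{k'}$, with the case $k+k'\ge p$ handled separately via $Z^p=Y_{e_1}$; your sketch does not address this.
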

\begin{proof}
Any automorphism $\alpha$ of $(N,\cdot)$ is induced by conjugation by a matrix as in (\ref{alpha-aut}) for some $w \in V$ and some $A$ with $AMA^{-1}=M^s$. Then $\alpha$ will be an automorphism of the skew brace $B$ if and only if also
\begin{equation} \label{circ-autom2}
	g_{\eta} g_{\eta'} = g_\mu \Rightarrow g_{\alpha(\eta)} g_{\alpha(\eta')} = g_{\alpha(\mu)}.
\end{equation} 	
We apply this for several choices of $\eta$, $\eta'$, $\mu$. 

Firstly, let $g_\eta=X^i$, $g_{\eta'}=X$, $g_\mu=X^{i+1}$ for $i \in \Z$. Writing $\pi(i)=\pi\big( (M^{-is}-I)w\big)$ and $\tkap(i)=\tkap\big( (M^{-is}-I)w \big)$, we see from Lemma \ref{g-Phi} that
$$ g_{\alpha(\eta)} = X^{is} Y_{\pi(i)} Z^{\tkap(i)}, $$
and similarly for $g_{\alpha(\eta')}$ and $g_{\alpha(\mu)}$. This gives the equation
$$ X^{is} Y_{\pi(i)} Z^{\tkap(i)} X^s Y_{\pi(1)} Z^{\tkap(1)} 
        = X^{(i+1)s} Y_{\pi(i+1)} Z^{\tkap(i+1)}. $$
Rewriting the left-hand side and comparing exponents on $X$, we find that $\tkap(i)=0$. Thus $(M^{-is}-I)w=0$  for all $i \in \Z$. By Lemma \ref{centr-norm}(iii) we then have $w=0$.        

Next, take $g_\eta=Y_u Z^k$, $g_{\eta'}=X$, $g_\mu=Y_u Z^k X = X^{p^k} Y_u Z^k$ with $u \in V_0$ and $0 \leq k <p$. So we have
$$ g_{\alpha(\eta)} = Y_{\pi\big( A(u+J^{[k]}e_p) \big)} Z^{\tkap\big( A(u+J^{[k]}e_p) \big)}, $$
$$  g_{\alpha(\eta')} = X^s, $$
$$ g_{\alpha(\mu)} = X^{sp^k} Y_{\pi\big(A(u+J^{[k]}e_p)\big)} Z^{\tkap\big(A(u+J^{[k]}e_p)\big)}. $$
Hence
$$ Y_{\pi\big(A(u+J^{[k]}e_p)\big)} Z^{\tkap\big(A(u+J^{[k]}e_p)\big)} X^s = X^{sp^k} Y_{\pi\big(A(u+J^{[k]}e_p)\big)} Z_{\tkap\big(A(u+J^{[k]}e_p)\big)}. $$
Rewriting and equating exponents on $X$, we find that $\tkap\big(A(u+J^{[k]}e_p)\big)=k$ for all $u \in V_0$ and $0 \leq k <p$. In particular, $AV_0 = V_0$, and $(A-I)e_p \in V_0$. By Lemma \ref{centr-norm}(vii), we have $s=p^j$ for some $j$. Let  $B=AJ^{-j}$. Then $BM=MB$. Since $J V_0=V_0$, we also have $B V_0=V_0$. Hence by Lemma \ref{centr-norm}(v), $B=\lambda I$ for some $\lambda \in \F_p^\times$. Since $(A-I)e_p \in V_0$ and $(J-I)e_p=e_{p-1} \in V_0$, we also have $(B-I)e_p \in V_0$, so $\lambda=1$. We conclude that $A=J^j$. Thus any skew brace automorphism of $B$ arises from conjugation by some power of the block matrix in the statement of the Theorem.  

It remains to verify that conjugation by this block matrix does indeed give a skew brace automorphism. Clearly it will then have order $p$. Thus we take $\alpha$ to be given by $w=0$ and $A=J$. Then $s=p$, and now 
$$ g_\eta = X^i Y_u Z^k \Rightarrow g_{\alpha(\eta)} = X^{ip} Y_{\pi\big(J(u+J^{[k]}e_p)\big)} Z^k $$
where
$$ \pi\big(J(u+J^{[k]}e_p)\big) = Ju + JJ^{[k]} e_p - J^{[k]} e_p = Ju + (J^k-I) e_p. $$
We will check that (\ref{circ-autom2}) holds for arbitrary elements $g_\eta=X^i Y_u Z^k$ and $g_{\eta'}=X^{i'} Y_{u'} Z^{k'}$ of $G$, where $u$, $u' \in V_0$ and $0 \leq k, k' <p$. Let $\mu \in N$ be the element satisfying 
\begin{equation} \label{g-eta}
 g_\mu = g_\eta g_{\eta'} = X^{i+i'p^k} Y_{u+J^ku'} Z^{k+k'}.
\end{equation}
Then we have
\begin{eqnarray}
 	g_{\alpha(\eta)} g_{\alpha(\eta')} & = & X^{ip} Y_{Ju+(J^k-I)e_p} Z^ k X^{i'p} Y_{Ju'+(J^{k'}-I)e_p} Z^{k'} 
 	                                                             \nonumber \\
 	                                 & = & X^{ip+i'p^{k+1}} Y_{Ju + (J^k-I) e_p + J^k\big(Ju'+(J^{k'}-I)e_p\big)} Z^{k+k'}
 	                                            \label{g-alpha-eta}.
\end{eqnarray}  
The expressions (\ref{g-eta}) and (\ref{g-alpha-eta}) are not necessarily in our standard form (\ref{g1}) since possibly $k+k' \geq p$. We therefore distinguish two cases. 

Firstly, if $k+k'<p$, then from (\ref{g-eta}) we have
$$ g_{\alpha(\mu)} = X^{ip+i'p^{k+1}} Y_{J(u+J^ku') +(J^{k+k'}-I)e_p} Z^{k+k'}. $$
The exponents on $X$ and $Z$ agree with those in (\ref{g-alpha-eta}), so (\ref{circ-autom2}) holds in this case if the subscripts on $Y$ coincide, that is, if 
$$  J(u+J^ku') +(J^{k+k'}-I)e_p = Ju + (J^k-I) e_p + J^k \big( Ju'+(J^{k'}-I \big) e_p). $$
Expanding both sides, we see that this identity does indeed hold. 
 
Secondly, if $k+k' \geq p$ then we use the the relation $Z^p=Y_{e_1}$ to rewrite (\ref{g-eta}) and (\ref{g-alpha-eta}) as
$$	g_\mu = g_\eta g_{\eta'} = X^{i+i'p^k} Y_{u+J^ku'+e_1} Z^{k+k'-p},  $$
$$  g_{\alpha(\eta)} g_{\alpha(\eta')} = X^{ip+i'p^{k+1}} Y_{Ju + (J^k-I) e_p + J^k \big( Ju'+(J^{k'}-I)e_p \big) +e_1} Z^{k+k'-p}. $$  
Then 
$$ g_{\alpha(\mu)} = X^{ip+i'p^{k+1}} Y_{J(u+J^ku'+e_1)+(J^{k+k'-p}-I)e_p} Z^{k+k'-p}, $$
which agrees with $g_{\alpha(\eta)} g_{\alpha(\eta')}$ provided that
\begin{eqnarray*}
	\lefteqn{J(u+J^ku'+e_1)+(J^{k+k'-p}-I)e_p = } \\
	& Ju + (J^k-I) e_p + J^k \big( Ju'+(J^{k'}-I) e_p \big) +e_1. 
\end{eqnarray*}
Expanding both sides, and recalling that $Je_1=e_1$ and $J^p=I$, we again find that the required identity holds.
\end{proof}

\section{Classification of simple skew braces order $n$}

Our goal in this final section is the following result.

\begin{theorem}  \label{uniqueness}
Any simple skew brace of order n as in Notation \ref{pq} is isomorphic to either the skew brace $B$ of Theorem \ref{simple} or its opposite skew brace $B^\opp$.
\end{theorem}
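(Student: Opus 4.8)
The plan is to classify all simple skew braces of order $n = p^p q$ by classifying regular subgroups $G$ of $\Hol(N')$ for each possible additive group $N'$, up to $\Aut(N')$-conjugacy, where $N'$ ranges over groups of order $n$. First I would reduce the set of candidate additive groups: if $B$ is a simple skew brace of order $n$ with additive group $N' = (B,\cdot)$, then since $|N'| = p^p q$ with $p^p$ a prime power, $N'$ has a normal Sylow $p$-subgroup or a normal Sylow $q$-subgroup (by Sylow's theorems, $n_q \equiv 1 \pmod q$ and $n_q \mid p^p$ forces $n_q \in \{1,p,\dots,p^p\}$, and similarly for $n_p$); one then checks using the structure of $\Hol(N')$ that the cases where $N'$ is nilpotent, or where $N'$ has a normal Sylow $q$-subgroup, cannot produce a \emph{simple} skew brace — the relevant Sylow subgroup of $N'$, being characteristic, is an ideal unless $G$ fails to normalize it, and a careful count (using that $q \nmid |\GL_s(\F_p)|$ for $s < p$, as in Lemma \ref{constr-N}) shows this cannot happen. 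This leaves exactly $N' \cong N$ by Lemma \ref{constr-N}.

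So I would then fix $N' = N$ and study regular subgroups $G \le \Hol(N)$ giving a simple skew brace. The Sylow $p$-subgroup $P$ of $N$ is characteristic, hence an ideal of $B$ unless $G$ does not have $P \cap G$-image... more precisely: $P$ is normal in $(B,\cdot)$, and it is a left ideal automatically (Remark \ref{char}), so $P$ is an ideal unless $P$ is not normal in $(B,\circ) \cong G$. Thus for $B$ simple we need: the preimage in $G$ of the Sylow $p$-subgroup of $N$ under the regularity bijection is \emph{not} a normal subgroup of $G$. Now $\Hol(N) = N \rtimes \Aut(N)$ has order $p^p q \cdot (p^p-1)p$, and I would analyze the Sylow $q$-subgroups: a regular $G$ has order $p^p q$, so contains a unique (since $q \nmid p^p \cdot p$, there's a unique Sylow $q$-subgroup of... no) — rather, $G$ has an element of order $q$, which is $\Hol(N)$-conjugate, after adjusting by $\Aut(N)$, into a standard position. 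The key structural fact I would extract from Lemma \ref{centr-norm} is that the elements of order $q$ in $\Hol(N)$, up to $\Aut(N)$-conjugacy, are tightly constrained: such an element projects to an element of order $q$ in $\Aut(N)$ (or is trivial there), which by Lemma \ref{centr-norm}(vii) means its matrix part is a power of $M$; conjugating, I can assume it is essentially $X$ or a twist thereof.

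The main work is then: given that $X$ (or an $\Aut(N)$-conjugate) lies in $G$, determine the possible $G$. Since $|G| = p^p q$ and $G \ni X$ with $X$ of order $q$, $G = \langle X \rangle \ltimes P_G$ where $P_G$ is a Sylow $p$-subgroup of $G$ of order $p^p$ normalized by $X$. One describes $P_G$ as a subgroup of $\Hol(N)$ on which $X$ acts; regularity forces $P_G$ to act regularly on the Sylow $p$-subgroup $V$ of $N$ (a translation argument), and one classifies such $P_G \le \Hol(V) \rtimes (\text{stuff})$ compatible with the $X$-action and with the global regularity on $N$. I expect that the regularity condition plus the $X$-equivariance pins down $P_G$ up to finitely many choices, and the requirement that $P_G$ be \emph{non-normal} in $G$ (equivalently, $P_G \ne $ the obvious "additive" copy, which would give the trivial skew brace — but the trivial skew brace on $N$ is not simple since $P$ is an ideal) eliminates all but two conjugacy classes, which are $G$ and $G^*$. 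The main obstacle will be the bookkeeping in this last classification step: parametrizing the candidate Sylow $p$-subgroups $P_G$ inside $\Hol(N)$ and showing the regularity and equivariance constraints leave precisely two $\Aut(N)$-orbits; this is where the explicit matrix computations of \S3, the commutator relations of Corollary \ref{commutator}, and Lemma \ref{op-lemma} (to identify one orbit as $G^*$) all come into play. Once finiteness and the count "two" are established, Theorem \ref{no-isom} confirms the two are genuinely distinct, completing the classification.
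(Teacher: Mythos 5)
Your overall architecture coincides with the paper's: reduce the additive group to $N$, put the order-$q$ element of the multiplicative group into standard position up to $\Aut(N)$-conjugacy (the paper's Lemma \ref{q-semireg}), classify the compatible Sylow $p$-parts (Lemmas \ref{type-i} and \ref{type-iii}), and use Lemma \ref{op-lemma} to recognise the second class as $G^*$. But there are two concrete problems with the sketch as written. The first is in the reduction of the additive group. The hard case is when the additive group $H$ has a normal Sylow $q$-subgroup $Q$ but non-normal Sylow $p$-subgroup, while the multiplicative group is forced to be isomorphic to $N$. Nothing you cite rules this out: $N$ has non-normal subgroups of order $q$, so the left ideal $Q$ need not be an ideal, and the fact that $q\nmid|\GL_s(\F_p)|$ for $s<p$ is not the relevant obstruction. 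The paper kills this case by observing that $C_H(Q)=Q\times P_0$, with $P_0$ the kernel of the action of the Sylow $p$-subgroup on $Q$, is a \emph{characteristic} subgroup of $H$ of order $p^iq$ with $0<i<p$; being characteristic it is a left ideal, hence corresponds to a subgroup of the multiplicative group of that order, and $N$ has no subgroup of order $p^iq$ for $0<i<p$ (Lemma \ref{N-subgroups}). Without this (or a substitute) your reduction to $N'\cong N$ is incomplete.

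Second, you have the normality the wrong way round in the multiplicative group. You write $G=\langle X\rangle\ltimes P_G$ with $P_G$ ``normalized by $X$''; but if the Sylow $p$-subgroup $P_G$ were normal in $G$ it would coincide with the image, under the regularity bijection, of the characteristic Sylow $p$-subgroup of $N$, which would then be an ideal and $B$ would not be simple. The correct configuration, and the one the paper's Lemma \ref{type-i} exploits, is that $\langle X\rangle$ is normal in $G$ and the non-normal $P_G$ acts on it by an automorphism of order $p$; the main computation must therefore parametrise elements of $p$-power order \emph{normalising} $\langle X\rangle$ (which is what pins down the $Y_v$ and $Z$ and leads to the relations of Lemma \ref{G-rels}), not a subgroup normalised by $X$. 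You do later impose that $P_G$ be non-normal, so this is presumably a slip, but as stated the set-up for the decisive classification step is inconsistent and would send the bookkeeping in the wrong direction.
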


\subsection{Determining the additive and multiplicative groups}

\begin{lemma}  \label{ppq-grps} 
	Let $H$ be a group of order $p^p q$, and let $P$ and $Q$ be a Sylow $p$-subgroup and a Sylow $q$-subgroup respectively. Then at least one of $P$ and $Q$ is normal in $H$, and $H$ has one of the following structures:
\begin{itemize}
		\item[(i)] $H=P \times Q$;
		\item[(ii)] $H \cong N$ with $N$ as in Lemma $\ref{constr-N}$, so that in particular $H$ is a semidirect product $P \rtimes Q$ and $Q$ is not normal in $H$;
		\item[(iii)] $H$ is a semidirect product $Q \rtimes P$ and $P$ is not normal in $H$.
\end{itemize}
\end{lemma}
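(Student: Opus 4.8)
The plan is to analyze the group $H$ of order $n = p^p q$ via Sylow theory, using the divisibility constraints forced by the hypothesis that the multiplicative order of $p$ mod $q$ is $p$. First I would count Sylow subgroups: the number $n_q$ of Sylow $q$-subgroups is $\equiv 1 \pmod q$ and divides $p^p$, so $n_q \in \{1, p, p^2, \ldots, p^p\}$; and the number $n_p$ of Sylow $p$-subgroups is $\equiv 1 \pmod p$ and divides $q$, so $n_p \in \{1, q\}$. The key arithmetic input is that, since the order of $p$ mod $q$ equals $p$, we have $p^i \not\equiv 1 \pmod q$ for $0 < i < p$, and $p^p \equiv 1 \pmod q$. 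Hence among $p, p^2, \ldots, p^{p-1}$ none is $\equiv 1 \pmod q$, so the only candidates for $n_q$ are $1$ and $p^p$. I would show that $n_q = p^p$ and $n_p = q$ cannot occur simultaneously: if $n_p = q$, the $q$ Sylow $p$-subgroups each have order $p^p$ and pairwise intersect in subgroups that are... actually the cleaner route is a direct element count — $n_q = p^p$ forces $p^p(q-1)$ elements of order divisible by $q$ (elements lying in some Sylow $q$-subgroup minus the identity, all distinct since $|Q|=q$ is prime so distinct Sylow $q$-subgroups meet trivially), leaving only $p^p$ elements of $q'$-order, which must then form the unique Sylow $p$-subgroup; so $n_q = p^p \Rightarrow n_p = 1$. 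Thus at least one of $P$, $Q$ is normal.

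Next I would split into the two cases. \textbf{Case $P \lhd H$.} Then $Q$ is a complement (by Schur–Zassenhaus, or simply by orders), so $H = P \rtimes Q$ for some action of $Q \cong C_q$ on $P$. If the action is trivial, $H = P \times Q$, which is structure (i). If the action is nontrivial, then $Q$ is not normal (a nontrivial normal Sylow $q$-subgroup would be central in $H/P$-land... more directly: if both $P$ and $Q$ were normal then $H = P \times Q$ and the action is trivial), and Lemma~\ref{constr-N} applies verbatim to give $H \cong N$, which is structure (ii). \textbf{Case $Q \lhd H$.} Then again by order considerations $P$ is a complement, so $H = Q \rtimes P$. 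If $P$ is also normal we are back in the $P \times Q$ situation of (i); otherwise the action of $P$ on $Q$ is nontrivial, giving structure (iii). I should check these three cases are the exhaustive list appearing in the statement — case (i) absorbs all the situations where the action is trivial on the relevant side, and (ii), (iii) are the two genuinely semidirect possibilities; note (ii) and (iii) are mutually exclusive (in (ii) $Q$ is not normal, in (iii) $P$ is not normal, and both cannot fail to be normal by the first paragraph).

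The only real subtlety — the step I expect to be the main obstacle — is ruling out $n_q = p^p$ together with $n_p = q$, i.e. confirming that $P$ and $Q$ cannot both fail to be normal. The element-counting argument above handles it cleanly: $n_q = p^p$ accounts for exactly $p^p(q-1) + 1$ elements (those in the Sylow $q$-subgroups), so there are exactly $n - (p^p(q-1)+1) = p^p - 1$ elements of order prime to $q$ remaining, and together with the identity these $p^p$ elements are forced to constitute a single Sylow $p$-subgroup (any Sylow $p$-subgroup consists of $p'$... rather $q'$-order elements and has exactly $p^p$ of them), hence $n_p = 1$, contradicting $n_p = q > 1$. So whenever $Q$ is not normal, $P$ is. The rest is routine structure theory, and the appeal to Lemma~\ref{constr-N} in case (ii) does the heavy lifting of pinning down $H$ up to isomorphism.
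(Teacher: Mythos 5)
Your proof is correct, and it reaches the key conclusion (at least one of $P$, $Q$ is normal) by a genuinely different route from the paper. The paper argues as follows: assuming $P$ is not normal, it takes a composition series of $H$, locates the unique composition factor of order $q$ at some level $s<p$, observes that $H_{s+1}=P_sQ$ with $|P_s|=p^s$, and uses the fact that a group of order $p^s$ with $s<p$ admits no automorphism of order $q$ (a Frattini-quotient argument, resting on the same arithmetic fact you use, namely that the order of $p$ modulo $q$ is $p$) to conclude that $Q$ centralises $P_s$ and hence is normal in $H_{s+1}$; normality of $Q$ is then pushed up the composition series by a characteristic-subgroup induction. You instead run the classical Sylow count: $n_q$ divides $p^p$ and is $\equiv 1 \pmod q$, so the hypothesis on $\mathrm{ord}_q(p)$ forces $n_q\in\{1,p^p\}$, and when $n_q=p^p$ the $p^p(q-1)$ elements of order $q$ (distinct Sylow $q$-subgroups of prime order meet trivially) leave exactly $p^p$ elements, which must coincide with any given Sylow $p$-subgroup, whence $n_p=1$. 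Your counting argument is more elementary and exploits the fact that $|Q|=q$ is prime, which makes the trivial-intersection count clean; the paper's composition-series argument is more structural and is essentially a reuse of the automorphism-order machinery already set up for Lemma \ref{constr-N}. One stylistic caution: your phrase ``elements of order divisible by $q$'' should really be ``elements of order exactly $q$'' (an element of order $pq$ would lie in no Sylow $q$-subgroup), but this does not affect the argument, since the cardinality identity $|P\setminus\{1\}|=p^p-1$ forces the complement of the union of the Sylow $q$-subgroups to be exactly $P$, which in particular rules out elements of order $p^iq$ a posteriori. The subsequent case analysis (trivial versus nontrivial action on each side, and the appeal to Lemma \ref{constr-N} when $P\lhd H$ and $Q$ is not normal) matches the paper's.
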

\begin{proof}
If $P \lhd H$ then either $H$ is a direct product as in (i), or $Q$ is not normal in $H$. In the latter case, Lemma \ref{constr-N} shows that (ii) holds. It therefore suffices to show that if $P$ is not normal in $H$ then $Q \lhd H$, since in that case $P$ is a complement to $Q$ and (iii) holds. 

Suppose that $P$ is not normal in $H$, and let
$$ \{1\} = H_0 \lhd H_1 \lhd \ldots \lhd H_{p+1} = H $$
be a composition series for $H$. Then exactly one quotient $H_{s+1}/H_s$ has order $q$, and $s<p$ since otherwise $H_p$ would be a normal Sylow $p$-subgroup of $H$. Writing $P_i=H_i \cap P$ for each $i$, we obtain the series
$$ \{1\} = P_0 \lhd P_1 \lhd \ldots P_s = P_{s+1}  \lhd  \cdots \lhd  P_{p+1} = P, $$
which is a composition series of $P$ with one term repeated. Then $H_s=P_s$ and $H_{s+1} = P_s Q$ for some choice of Sylow $q$-subgroup $Q$ of $H$. Thus $P_s \lhd P_s Q$, and $Q$ acts by conjugation on $P_s$. As $P_s$ has order $p^s$ with $s<p$, and the order of $p$ mod $q$ is $p$, a Frattini argument (as in the proof of Lemma \ref{constr-N}) shows that $P_s$ has no automorphism of order $q$. Thus $Q$ acts trivially on $P_s$ and $Q \lhd P_s Q=H_{s+1}$. Now if $s+1 \leq t  \leq p$ and $Q \lhd H_t$ then $Q$ is the unique Sylow $q$-subgroup of $H_t$ and is therefore characteristic in $H_t$. As $H_t \lhd H_{t+1}$, it follows that $Q \lhd H_{t+1}$. By induction, we therefore have $Q \lhd H_{p+1}=H$ as required.
\end{proof}

\begin{lemma}  \label{N-G}
Let $H$ be a group of order $p^p q$ and let $G$ be a regular subgroup of $\Hol(H)$ giving a skew brace $B$. If $B$ is simple then, in the classification of Lemma \ref{ppq-grps},  $H$ is of type (ii) and $G$ is of type (iii).  
\end{lemma}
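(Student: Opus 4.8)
The plan is to rule out each of the three possible structures for $H$ (from Lemma~\ref{ppq-grps}) except type (ii), and likewise each structure for $G$ except type (iii), using the fact that $B$ simple forces both $H=(B,\cdot)$ and $G\cong(B,\circ)$ to have no nontrivial proper subgroup that is simultaneously normal in $(B,\cdot)$ and in $(B,\circ)$. The starting point is that an ideal of $B$ is a subgroup normal in both groups, so if $B$ is simple then no such common normal subgroup exists besides $\{1\}$ and $B$. First I would observe that $H$ cannot be of type (i), i.e.\ $H=P\times Q$ is impossible: in that case $P$ is characteristic in $H$ (it is the unique Sylow $p$-subgroup, being normal), hence by Remark~\ref{char} it is a left ideal of $B$ that is normal in $(B,\cdot)$, and it only remains to see it is normal in $(B,\circ)=G$. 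Here I would argue that a regular subgroup $G\leq\Hol(H)$ acting on $H=P\times Q$ with $P$ the unique subgroup of order $p^p$ must normalise $P$: the image of $P$ under the bijection $G\to H$, together with the fact that conjugation in $\Hol(H)$ by elements of $G$ corresponds to the $\lambda$-action which preserves $P$ since $P$ is characteristic—actually more cleanly, $\Aut(H)$ fixes $P$ setwise, $H$ acting by left translation on itself moves $P$ around but the subgroup structure is rigid. The cleanest route: since $P\lhd (B,\cdot)$ and $P$ is characteristic, $P$ is a left ideal, so $\lambda_a(P)\subseteq P$ for all $a$; combined with $P\lhd(B,\cdot)$ this is exactly what forces $P$ to be a subgroup of $(B,\circ)$, and for it to be an \emph{ideal} we additionally need $P\lhd(B,\circ)$. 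I would get normality in $(B,\circ)$ from the analogous statement about $Q$: in type (i) both $P$ and $Q$ are characteristic in $H$, hence both are left ideals; by \cite[Corollary 4.9]{CSV}-type considerations or directly, a skew brace of order $p^pq$ with $(B,\cdot)=P\times Q$ has $(B,\circ)$ also of order $p^pq$ with Sylow subgroups that must be the left-ideal subgroups $P$ and $Q$, forcing them normal in $(B,\circ)$ too; so $P$ is an ideal, contradicting simplicity. (Alternatively, and perhaps more robustly: in type (i), $H$ is nilpotent, and one can invoke that a skew brace with nilpotent additive group of order divisible by at least two primes has a nontrivial ideal, namely the Sylow subgroup of the appropriate prime.)

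Next I would handle the type-(iii) case for $H$, i.e.\ $(B,\cdot)=Q\rtimes P$ with $P$ not normal: here $Q$ is the unique Sylow $q$-subgroup, so $Q$ is characteristic in $(B,\cdot)$, hence a left ideal normal in $(B,\cdot)$; I then need $Q\lhd(B,\circ)$. For this I would use that left ideals of a skew brace are subgroups of $(B,\circ)$ (stated after Definition~\ref{def-left-ideal}), so $Q$ is a subgroup of order $q$ in $(B,\circ)=G$, a group of order $p^pq$; and then I would show $Q$ is in fact normal in $G$ by examining the possible structures of $G$ from Lemma~\ref{ppq-grps} applied to $G$: if $G$ has a normal Sylow $q$-subgroup it must be $Q$ (uniqueness), done; if $G$ has a normal Sylow $p$-subgroup (types (i) or (ii) for $G$) then $G$ has a characteristic, hence normal-in-$(B,\cdot)$-and-$(B,\circ)$... wait, that goes the wrong way. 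The correct symmetric statement: a left ideal $I$ is, by definition, a subgroup of $(B,\cdot)$ with $\lambda_a(I)\subseteq I$; it is automatically a subgroup of $(B,\circ)$; and a subgroup of $(B,\cdot)$ normalised by $G$'s action... Let me instead use the opposite skew brace symmetry: left ideals of $B$ that are normal in $(B,\cdot)$ correspond under the $*$-construction to analogous subgroups, and by Lemma~\ref{op-lemma} it suffices to treat $H$ up to passing to $B^{\opp}$. So type (iii) for $H=(B,\cdot)$ becomes, for $B^{\opp}$, a group of the same isomorphism type (since $(B^{\opp},\cdot)=(B,\cdot)^{\opp}\cong(B,\cdot)$), so this symmetry alone does not collapse (i) vs (iii). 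Instead: in type (iii), since $Q$ is characteristic in $(B,\cdot)$, it is a left ideal; and $G$ has order $p^pq$, so its Sylow $q$-subgroup is $Q$ (a left ideal of order $q$ sits inside some Sylow $q$-subgroup of $G$, but $|Q|=q$ makes it \emph{equal} to a Sylow $q$-subgroup); I then need $Q\lhd G$. Now apply Lemma~\ref{ppq-grps} to $G$: if $G$ is of type (i) for itself, $G=(\text{Syl}_p\ G)\times Q$, so $Q\lhd G$, contradiction; if $G$ is of type (ii), $G$ has a normal Sylow $p$-subgroup which is then characteristic in $G=(B,\circ)$, so that Sylow $p$-subgroup $R$ is a normal subgroup of $(B,\circ)$ which is moreover a right ideal / by the $B^{\opp}$ version of Remark~\ref{char} an ideal—concretely, $R$ normal in $(B,\circ)$ and, being characteristic in $(B,\circ)$, invariant under the $\lambda^{\opp}$-action, hence normal in $(B,\cdot)$ too, again contradicting simplicity; if $G$ is of type (iii) for itself, then $P$ is not normal in $G$ and $Q\lhd G$, which gives the ideal $Q$, contradiction. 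So in every subcase type-(iii) for $H$ is excluded, leaving type (ii) for $H$.

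Finally, with $H$ of type (ii) fixed, I would determine the type of $G$. By Lemma~\ref{ppq-grps} applied to $G$, either $G$ has a normal Sylow $p$-subgroup (types (i), (ii)) or $G$ has a normal Sylow $q$-subgroup with $P$ not normal (type (iii)). In the first case, the normal Sylow $p$-subgroup $R$ of $G=(B,\circ)$ is characteristic in $(B,\circ)$; by the $B^{\opp}$-analogue of Remark~\ref{char} (characteristic subgroups of $(B,\circ)$ are invariant under the $\lambda^{\opp}$-action, hence are right ideals and, being also normal in $(B,\circ)$, are ideals), $R$ is an ideal of $B$, contradicting simplicity. Hence $G$ must be of type (iii): its Sylow $q$-subgroup is normal and its Sylow $p$-subgroup is not. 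This completes the proof. \textbf{The main obstacle} I anticipate is making precise and correct the ``$B^{\opp}$-analogue'' bookkeeping—i.e.\ the statement that a characteristic subgroup of $(B,\circ)$ is automatically an ideal—since the roles of $\cdot$ and $\circ$ are not symmetric in a skew brace and one must verify that a subgroup normal in $(B,\circ)$ and characteristic there is also normal in $(B,\cdot)$; the safest fix is to phrase everything in terms of the regular-subgroup picture in $\Hol(N)$, where a subgroup normal in $G$ and stable under the $\Aut(N)$-part it generates translates directly into a subgroup of $N$ normal under both left translations and $\Aut(N)$-conjugation, hence normal in $N$.
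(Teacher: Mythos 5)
Your case analysis starts along the same lines as the paper, but it collapses at exactly the point you flag as ``the main obstacle'', and the repair you sketch does not work. The principle you invoke twice --- that a characteristic subgroup of $(B,\circ)$ is automatically an ideal, i.e.\ also normal in $(B,\cdot)$ --- is false. The asymmetry between $\cdot$ and $\circ$ is genuine: Remark~\ref{char} goes only one way, and passing to $B^{\opp}$ reverses the additive operation but does not interchange the two groups. A concrete counterexample sits in \S\ref{nonabsim}: for the skew brace arising from the exact factorisation $A_5=A_4\cdot C_5$, the Sylow $5$-subgroup is characteristic in $(B,\circ)\cong A_4\times C_5$, yet it cannot be normal in $(B,\cdot)\cong A_5$. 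Your fallback via the regular-subgroup picture fails for the same reason: a normal subgroup of $G\leq\Hol(N)$ corresponds to a normal subgroup of $(B,\circ)$ whose underlying set need not be a subgroup of $(N,\cdot)$ at all, let alone a normal one. In your final paragraph the error is harmless, because the implication ``$H$ of type (ii) forces $G$ of type (iii)'' follows correctly from the other direction: the normal Sylow $p$-subgroup $P$ of $H$ is characteristic in $(B,\cdot)$, hence a left ideal, hence a Sylow $p$-subgroup of $G$, and if $G$ had a normal Sylow $p$-subgroup it would have to equal $P$, making $P$ an ideal. But in the subcase $H$ of type (iii) with $G$ of type (ii) the error is fatal: there the left ideal $Q$ is a \emph{non-normal} Sylow $q$-subgroup of $G$ and the Sylow $p$-subgroup of $H$ is not normal, so no Sylow subgroup can yield an ideal, and your argument produces no contradiction.

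Closing that subcase requires an idea absent from your proposal. Writing $H=Q\rtimes P$, let $P_0$ be the kernel of the action of $P$ on $Q$; it is nontrivial since $|P|=p^p>q-1\geq|\Aut(Q)|$ forces the image of $P$ in $\Aut(Q)$ to be proper, and it is a proper subgroup of $P$ since $P$ acts nontrivially. Then $C_H(Q)=Q\times P_0$ is a \emph{characteristic} subgroup of $(B,\cdot)$ of order $p^iq$ with $0<i<p$, hence by Remark~\ref{char} a left ideal, hence a subgroup of $(B,\circ)\cong G\cong N$ of that order --- contradicting Lemma~\ref{N-subgroups}, which says $N$ has no subgroup of order $p^iq$ for $0<i<p$. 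Note that this argument stays entirely on the ``safe'' side of the asymmetry (characteristic subgroups of the additive group), and that it needs a left ideal of intermediate order, not a Sylow subgroup. Without this step, or an equivalent one, your proof is incomplete.
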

\begin{proof}
If $H$ contains a normal Sylow $p$-subgroup $P$ then $P$ is characteristic in $H$ and, by Remark \ref{char}, gives a left ideal in $B$.  This is then a Sylow $p$-subgroup of $G$. If this subgroup is normal then $B$ contains an ideal of order $p^p$. Similarly, if both $H$ and $G$ contain a normal Sylow $q$-subgroup, then $B$ contains an ideal of order $q$. Thus, for $B$ to be simple, either $H$ is of type (ii) and $G$ of type (iii), or vice versa. Suppose for a contradiction that the second of these possibilities holds: $H = Q \rtimes P$ and $G=V \rtimes C_q \cong N$. 
Since $q<p^p$, the kernel of the action of $P$ on $Q$ is a subgroup $P_0$ of $P$ of order $p^i$ where $1 \leq i \leq p-1$. Then $Q \times P_0$ is a subgroup of $H$ of order $p^i q$, contradicting Lemma \ref{N-subgroups}.
\end{proof}

\subsection{Regular subgroups}

By Lemma \ref{N-G}, any simple skew brace of order $n$ comes from a regular subgroup in $\Hol(N)$, where $N$ is the group constructed in Lemma \ref{constr-N}. Moreover, we only need to consider regular subgroups with the structure $Q \rtimes P$ as in Lemma \ref{ppq-grps}(iii). Our aim is to show that, up to conjugation by $\Aut(N)$, the only possibilities are the group $G$ in (\ref{def-G}) and the corresponding group $G^*$ obtained from $G$ via Lemma \ref{op-lemma}. 

Any such subgroup contains a normal subgroup of order $q$. Let us begin by looking for suitable subgroups of order $q$ in $\Hol(N)$. We will say that a subgroup of $\Hol(N)$ acts freely on $N$ if the stabiliser of each element of $N$ is trivial. Any subgroup of a regular group acts freely. 

\begin{lemma} \label{q-semireg}
	Up to conjugation by $\Aut(N)$, any subgroup of order $q$ in $\Hol(N)$ which acts freely on $N$ has a generator of one of the following forms:
	\begin{itemize}
		\item[(i)]
		$$ X = \left[  \left( \begin{array}{c|c} M  & 0 \\ \hline 0 & 1 \end{array}
		\right) , \conj \left( \begin{array}{c|c} I &  0 \\ \hline  0 & 1 \end{array} \right) \right]; $$
		\item[(ii)] 
		$$ X = \left[  \left( \begin{array}{c|c} M^{-1} & 0 \\ \hline 0 & 1 \end{array} \right) , \conj \left( \begin{array}{c|c} M &  0 \\ \hline  0 & 1 \end{array} \right) \right]; $$
		\item[(iii)] 
		$$ X = \left[  \left( \begin{array}{c|c} M^k & v \\ \hline 0 & 1 \end{array} \right) , \conj \left( \begin{array}{c|c} M &  0 \\ \hline  0 & 1 \end{array} \right) \right] $$ 
		with $1 \leq k \leq q-2$ and $v$ arbitrary. 
	\end{itemize}
Moreover the generator in (ii) is obtained from that in (i) via the operation $*$ in Lemma \ref{op-lemma}.
\end{lemma}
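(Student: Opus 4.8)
The plan is to classify, up to $\Aut(N)$-conjugacy, the free subgroups of order $q$ in $\Hol(N)$ by analysing the possible forms of a generator $X = [R,\conj(S)]$ with $R = \left(\begin{array}{c|c} M^k & v \\ \hline 0 & 1\end{array}\right)$ and $S = \left(\begin{array}{c|c} A & w \\ \hline 0 & 1\end{array}\right)$ where $A$ normalises $\langle M\rangle$. First I would observe that $X$ has order $q$ means the ``linear part'' must have order dividing $q$: the image of $X$ under the projection $\Hol(N)\to\Aut(N)$ is $\conj(S)$, and the image of $R$ under $N\to N/P\cong C_q$ is $M^k$. Using Lemma~\ref{centr-norm}(vii), an element of the normaliser of $\langle M\rangle$ of order dividing $q$ is a power $M^r$ of $M$ itself; so after noting the $j$-component must vanish, $A=M^r$ for some $r$. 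Conjugating $S$ by an element of $\F_p[M]^\times = \langle T\rangle$ (which lies in $\Aut(N)$ and centralises $\langle M\rangle$) we can adjust $w$; and conjugating by a matrix $\left(\begin{array}{c|c} I & t \\ \hline 0 & 1\end{array}\right)$ shifts $v$ by $(M^k-I)t$ and conjugates $w$, so since $M^k-I$ is invertible (Lemma~\ref{centr-norm}(ii)) when $k\neq 0$ we can try to normalise $v$ to $0$ in that case.

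Next I would split into cases according to $k$ (the $C_q$-component of $R$) and $r$ (the exponent with $A=M^r$). The freeness condition is the key constraint: $X^j$ fixes some $\eta\in N$ for some $0<j<q$ iff $X$ does not act freely, and one computes $X^j = [R(SR S^{-1})(S^2 R S^{-2})\cdots, \conj(S^j)]$; the stabiliser of $1_N\in N$ is nontrivial precisely when the first component of some nontrivial power $X^j$ equals $1_N$ while we need it never to — actually since $|X|=q$ is prime, $X$ acts freely iff $X$ has no fixed point iff the first component of $X$ is not of the form $(S\text{-part acts, fixed point exists})$; concretely, writing things out, $X$ fixes $\eta=\left(\begin{array}{c|c} M^a & u\\ \hline 0 & 1\end{array}\right)$ iff $M^{k}=I$ (forcing $k=0$) together with a compatibility on $u$, OR $M^k\neq I$ and then $X$ automatically has no fixed points. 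So the case $k=0$ is delicate: there $R=\left(\begin{array}{c|c} I & v \\ \hline 0 & 1\end{array}\right)$ with $v\neq 0$ (else $X$ has finite order issues) and $A=M^r$ with $r\neq 0$ (else $X\in N$ has order $p$ or is trivial); freeness then forces a condition relating $v$ and $r$ that, after conjugation, yields case (ii) (with $r=1$, $k=-1$ after relabelling — note $M^{-1}$ and $M$ are $\Aut(N)$-conjugate via $J$-type elements only up to the normaliser, so one pins down the exponent). When $k\neq 0$, conjugating away $v$ as above: if $r=0$ we land in case (i) (possibly after replacing $X$ by a power to make $k=1$, but the statement allows general order-$q$ generator, so really case (i) is ``$A=I$''), and if $r\neq 0$ we are in case (iii). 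The bound $1\leq k\leq q-2$ in (iii) comes from excluding $k=0$ (handled separately) and $k=q-1\equiv -1$, which after conjugation by a normaliser element matching $r$ collapses into case (ii); the claim ``$v$ arbitrary'' reflects that when $r\neq 0$, $M^k-I$ invertible still lets us shift $v$, but shifting $v$ also changes $w=0$, so to keep $S$ in the normalised form $\conj\left(\begin{array}{c|c} M & 0\\ \hline 0 & 1\end{array}\right)$ we cannot simultaneously kill $v$ — hence $v$ survives as a genuine parameter.

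The last sentence — that the generator in (ii) is the $*$-image of the generator in (i) — I would verify by direct substitution into the formula of Lemma~\ref{op-lemma}: with $g_\eta = X$ corresponding to $\eta = M\cdot 1_N$ in the sense that the first component of $X$ is $\left(\begin{array}{c|c} M & 0 \\ \hline 0 & 1\end{array}\right)$, i.e. $\eta = \left(\begin{array}{c|c} M & 0 \\ \hline 0 & 1\end{array}\right)$ as an element of $N$, one has $\eta^{-1} = \left(\begin{array}{c|c} M^{-1} & 0 \\ \hline 0 & 1\end{array}\right)$ and $\conj(\eta)$ is conjugation by $\left(\begin{array}{c|c} M & 0 \\ \hline 0 & 1\end{array}\right)$, which equals $\conj\left(\begin{array}{c|c} M & 0\\ \hline 0 & 1\end{array}\right)$ in the identification of Lemma~\ref{Aut-N}; since the $\Aut(N)$-component $\alpha_\eta$ of $X$ is trivial, $g_\eta^* = (\eta^{-1},\conj(\eta)\alpha_\eta) = \left[\left(\begin{array}{c|c} M^{-1} & 0\\ \hline 0 & 1\end{array}\right), \conj\left(\begin{array}{c|c} M & 0\\ \hline 0 & 1\end{array}\right)\right]$, which is exactly the generator in (ii). This is a short computation.

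The main obstacle I anticipate is the bookkeeping in the $k=0$ case and in correctly pinning down the exponents (i.e.\ showing that after $\Aut(N)$-conjugation one really gets $M^{-1}$ paired with $M$ and not some other pair $M^{-a}$ with $M^a$), because conjugating $X$ by an element $\left(\begin{array}{c|c} T^i J^j & t\\ \hline 0 & 1\end{array}\right)$ of $\Aut(N)$ both rescales/Frobenius-twists $v$ and conjugates $w$, and one must track how the two components of $R$ and the two of $S$ transform simultaneously; getting the normal form right — in particular verifying that no further normalisation collapses (ii) and (iii) into (i), and that within (iii) the pair $(k,v)$ genuinely cannot be reduced further while keeping $S$ normalised — is where the care is needed. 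The freeness criterion itself is easy once stated correctly, but its interaction with the available conjugations is the delicate point.
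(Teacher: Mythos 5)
Your overall skeleton matches the paper's: write a generator as $X=[R,\conj(S)]$ with $R$ having linear part $M^k$ and translation part $v$, and $S$ having linear part $A$ and translation part $w$; use Lemma \ref{centr-norm}(vii) to force $A=M^r$; normalise by translation-conjugation; and use freeness to handle $k=0$. But there is a genuine gap at the step that actually separates cases (ii) and (iii): you never impose the condition that the \emph{first} ($N$-)component of $X^q$ is trivial. After normalising $w=0$ (when $r\neq 0$), one computes $X^j$ to have translation part $(M^{k+1})^{[j]}v$ in its first component, so $X^q=1$ forces $(M^{k+1})^{[q]}v=0$. When $k=q-1$ this reads $qv=0$, hence $v=0$, which is precisely why case (ii) carries no parameter $v$; when $1\leq k\leq q-2$ the matrix $(M^{k+1})^{[q]}=(M^{k+1}-I)^{-1}(M^{(k+1)q}-I)$ vanishes and $v$ is unconstrained, giving (iii). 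You offer no argument for $v=0$ in (ii), and none is available by conjugation alone: as you yourself note, the translation that would kill $v$ destroys the normalisation $w=0$, so the order computation is indispensable. (The same computation, applied to the $\Aut(N)$-component, is also what forces $w=0$ in the $A=I$ case leading to (i); you gloss over this.)

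Second, you misattribute the origin of case (ii). You assert that the $k=0$ subcase, after imposing freeness, ``yields case (ii)''. In fact, for $k=0$ and $A=M^r$ with $r\neq 0$, the element $X$ fixes the point of $P$ with translation part $(I-M^r)^{-1}v$ for \emph{every} choice of $v$ (there is no ``condition relating $v$ and $r$'' to satisfy), so this subcase is excluded outright by freeness; case (ii) arises from $k=q-1$, $r=1$ as above. Your freeness criterion itself --- free automatically when $k\neq 0$, since every nontrivial power then has nontrivial linear part $M^{kj}$ in its first component, with only $k=0$ delicate --- is correct, but the conclusion you draw from it is not, and the hedge that $k=q-1$ also ``collapses into case (ii)'' leaves the actual derivation of (ii) unproved. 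The verification of the final sentence via the $*$ operation of Lemma \ref{op-lemma} is correct and agrees with the paper.
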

\begin{proof}
A typical element $X$ of $\Hol(N)$ has the form 
$$ X = \left[  \left( \begin{array}{c|c} M^k  & v \\ \hline 0 & 1 \end{array} \right) , \conj \left( \begin{array}{c|c} A &  w \\ \hline  0 & 1 \end{array} \right) \right], $$ 
where $A$ normalises $\langle M \rangle$, say $AMA^{-1}=M^s$. 

We determine when $X$ has order $q$. The second component of $X^q$ is 
$$   \conj \left( \begin{array}{c|c} A^q &  A^{[q]} w \\ \hline  0 & 1 \end{array} \right) . $$ 
Thus we need $A^q=I$ and $A^{[q]}w=0$. There are two possibilities:
\begin{itemize}
\item[(a)] $A=I$, so $A^{[q]}=qI$ with $w=0$;
\item[(b)] $A$ has order $q$, so $A=M^r$ with $1 \leq r \leq q-1$ by Lemma \ref{centr-norm}(vii). Then $A-I$ is invertible and $A^{[q]} = (A-I)^{-1} (A^q-I)=0$ with $w$ arbitrary.
\end{itemize}

In case (a) we have 
$$ X = \left[  \left( \begin{array}{c|c} M^k  & v \\ \hline 0 & 1 \end{array} \right) , \conj \left( \begin{array}{c|c} I &  0 \\ \hline  0 & 1 \end{array} \right) \right]. $$ 
For $X$ to have order $q$, we also need $k \neq 0$. Replacing $X$ by a suitable power, we may assume $k=1$.
Let $u=(M-I)^{-1}v$. Inside $\Hol(N)$, we conjugate $X$ by the automorphism 
\begin{equation} \label{conj-aut}
	 \left( \begin{array}{c|c} I &  u \\ \hline  0 & 1 \end{array} \right) 
\end{equation}
of $N$, i.e.~by the element
$$	\left[  \left( \begin{array}{c|c} I  & 0 \\ \hline 0 & 1 \end{array} \right) , \conj \left( \begin{array}{c|c} I &  u \\ \hline  0 & 1 \end{array} \right) \right]  $$
of $\Hol(N)$. This replaces $v$ by $0$, so we obtain the element in (i).

In case (b), $X$ has the form 
$$ X = \left[  \left( \begin{array}{c|c} M^k  & v \\ \hline 0 & 1 \end{array} \right) , \conj \left( \begin{array}{c|c} M^r &  w \\ \hline  0 & 1 \end{array} \right) \right]. $$ 
for some $v$, $w$. Replacing $X$ by a suitable power, we may assume that $r=1$.

If $k=0$ then 
$$ X = \left[  \left( \begin{array}{c|c} I  & v \\ \hline 0 & 1 \end{array} \right) , \conj \left( \begin{array}{c|c} M &  w \\ \hline  0 & 1 \end{array} \right) \right] $$ 
which fixes
$$ \left( \begin{array}{c|c}I &  (I-M^{-1}) v \\ \hline  0 & 1 \end{array} \right),  $$
so the subgroup generated by $X$ does not act freely.

If $k \neq 0$, we may conjugate by the automorphism of the form (\ref{conj-aut}) with $u=(M-I)^{-1}w$. Thus, replacing $v$ by some other element of $V$, we can assume that
$$ X = \left[  \left( \begin{array}{c|c} M^k & v \\ \hline 0 & 1 \end{array} \right) , \conj \left( \begin{array}{c|c} M &  0 \\ \hline  0 & 1 \end{array} \right) \right]. $$ 
Inductively, we then have
$$ X^j = \left[  \left( \begin{array}{c|c} M^{kj} & (M^{k+1})^{[j]}v \\ \hline 0 & 1 \end{array} \right) , \conj \left( \begin{array}{c|c} M^j &  0 \\ \hline  0 & 1 \end{array} \right) \right]  \mbox{for } j \geq 1. $$  
If $k=q-1$, we have $(M^{k+1})^{[q]}v =qv$ so $v=0$. Then
$$ X = \left[  \left( \begin{array}{c|c} M^{-1} & 0 \\ \hline 0 & 1 \end{array} \right) , \conj \left( \begin{array}{c|c} M &  0 \\ \hline  0 & 1 \end{array} \right) \right] $$
is as in (ii). If $1 \leq k \leq q-2$ then $(M^{k+1})^{[q]}=0$ and we obtain no restriction on $v$, giving (iii).  

Finally, applying the operation $*$ from Lemma \ref{op-lemma} to the generator $X$ in (i), we obtain
$$ \left[  \left( \begin{array}{c|c} M  & 0 \\ \hline 0 & 1 \end{array}
\right)^{-1} , \conj \left( \begin{array}{c|c} M  & 0 \\ \hline 0 & 1 \end{array}
\right) \conj \left( \begin{array}{c|c} I &  0 \\ \hline  0 & 1 \end{array} \right) \right],$$
which coincides with the generator in (ii).
\end{proof}

We will look for regular subgroups of $\Hol(N)$ containing a normal subgroup of order $q$ generated by an element $X$ of order $q$ of type (i) or (iii) in Lemma \ref{q-semireg}. We do not need to consider type (ii) since the resulting groups will be obtained by applying the operation $*$ of Lemma \ref{op-lemma} to those coming from type (i).

\begin{lemma} \label{type-i}
For $X$ as in Lemma \ref{q-semireg}(i), the corresponding skew brace is the simple skew brace $B$ of Theorem \ref{simple}.
\end{lemma}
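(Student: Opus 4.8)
The plan is to show that any regular subgroup $G'$ of $\Hol(N)$ of type (iii) in the sense of Lemma \ref{ppq-grps} which contains $\langle X\rangle$ as a normal subgroup is carried onto the group $G$ of (\ref{def-G}) by some element of $\Aut(N)$; the claim then follows from \cite[Proposition 4.3]{GV} together with Theorem \ref{simple}. Since $\langle X\rangle\lhd G'$ we have $G'\le N_{\Hol(N)}(\langle X\rangle)$, so the first step is to describe this normaliser. A direct calculation shows that $[R,\conj(S)]$ normalises $\langle X\rangle$ exactly when $S=\bigl(\begin{smallmatrix}A&w\\0&1\end{smallmatrix}\bigr)$ with $A$ normalising $\langle M\rangle$, say $AMA^{-1}=M^{s(A)}$, and $R=\bigl(\begin{smallmatrix}M^k&-M^kw\\0&1\end{smallmatrix}\bigr)$; writing such an element as $W(k,A,w)$, its conjugation action on $X$ is $X\mapsto X^{s(A)}$, and multiplication is given by
$$ W(k_1,A_1,w_1)\,W(k_2,A_2,w_2) = W\bigl(k_1+s(A_1)k_2,\ A_1A_2,\ w_1+A_1w_2\bigr), $$
from which (taking $k_2=0$) conjugation by the automorphism $W(0,A,0)$ of $N$ sends $W(k',A',w')\mapsto W(s(A)k',\,AA'A^{-1},\,Aw')$, and internal conjugation by $W(k_z,A_z,w_z)$ sends $Y_w\mapsto Y_{A_zw}$. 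One also records, using Lemma \ref{centr-norm}(iv),(vii), that $C:=C_{\Hol(N)}(X)$ is the set of $W(k,A,w)$ with $A\in\F_p[M]^\times$, has index $p$ in $N_{\Hol(N)}(\langle X\rangle)$, and satisfies $C\cong\langle X\rangle\times(\F_p[M]^\times\ltimes V)$, so that its unique Sylow $p$-subgroup is $\{W(0,I,w):w\in V\}=\{Y_w:w\in V\}$.

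Next I would determine the structure of $G'=\langle X\rangle\rtimes P'$, where $P'$ is a Sylow $p$-subgroup. Since $G'$ is of type (iii), $P'$ acts nontrivially on $\langle X\rangle$, so the image of $P'$ in $N_{\Hol(N)}(\langle X\rangle)/C\cong C_p$ is everything and $P'_0:=P'\cap C$ has index $p$ in $P'$. Being a $p$-group of order $p^{p-1}$ inside $C$, it lies in the unique Sylow $p$-subgroup of $C$, so $P'_0=\{Y_w:w\in W_0\}$ for some $\F_p$-hyperplane $W_0\subseteq V$. Choose $z=W(k_z,A_z,w_z)\in P'\setminus P'_0$; then $A_z\ne I$ and $s(A_z)\ne 1$. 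Conjugation by $z$ carries $Y_w$ to $Y_{A_zw}$, so $P'_0\lhd P'$ forces $A_zW_0=W_0$. Computing $z^p=W(0,A_z^{\,p},A_z^{[p]}w_z)$ — here $s(A_z)=p^j$ has order $p$ modulo $q$ since $p$ does and $1\le j\le p-1$, so $1+s(A_z)+\dots+s(A_z)^{p-1}\equiv 0\pmod q$ — and using $z^p\in P'_0$ gives $A_z^{\,p}=I$, whence $A_z=T^iJ^j$ with $j\ne 0$ (as $s(A_z)\ne1$) and $(p-1)\mid i$ (as $A_z^{\,p}=I$) by Lemma \ref{centr-norm}(vii). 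The regularity hypothesis enters at just one place: $G'$ acts simply transitively on $N$, so $P'$ acts regularly on the set $Q\backslash N$ of $\langle X\rangle$-orbits; identifying $Q\backslash N$ with $V$ by $\bigl(\begin{smallmatrix}M^k&v\\0&1\end{smallmatrix}\bigr)\mapsto M^{-k}v$, the element $W(k,A,w)$ acts by the affine map $\bar\eta\mapsto A\bar\eta-w$, and regularity of $P'$'s action forces $w_z\notin W_0$.

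The last step is to normalise by $\Aut(N)$. Conjugating by $W(0,T^m,0)$ fixes $X$, replaces $A_z$ by $T^{i+m(1-p^j)}J^j$, and replaces $W_0$ by $T^mW_0$; since $\gcd(1-p^j,p^p-1)=p-1$ divides $i$, a suitable $m$ gives $A_z=J^j$, and then $W_0$, being a $J^j$-invariant (equivalently $J$-invariant) hyperplane, must equal $V_0$ because the Jordan block $J$ has a unique invariant hyperplane. Replacing the chosen generator $z$ by $z^l$ with $jl\equiv1\pmod p$ makes the $A$-component equal to $J$, the $w$-component staying outside $V_0$ because $(J^j)^{[l]}$ is invertible. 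Now $z=W(k_z',J,w_z')$ with $w_z'\notin V_0$; replacing $z$ by $zY_{w_0}$ with $w_0\in V_0$ changes $w_z'$ by $-Jw_0\in V_0$, so we may take $w_z'=ce_p$ with $c\in\F_p^\times$, and then conjugating by $W(0,\lambda I,0)$ with $\lambda=-c^{-1}$ — this fixes $X$, fixes the $A$-component $J$, and preserves $P'_0=\{Y_w:w\in V_0\}$ — turns $w_z'$ into $-e_p$. Thus $z=W(k_z',J,-e_p)=X^{k_z'}Z$, and therefore $G'=\langle X,\,\{Y_w:w\in V_0\},\,Z\rangle=\langle X,Y_{e_{p-1}},Z\rangle=G$, the last equality by Corollary \ref{commutator} and Lemma \ref{G-reg}, which gives the assertion.

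The part I expect to require the most care is this final normalisation: one has to keep track of exactly which automorphisms of $N$ preserve the partial normal form already achieved — fixing $\langle X\rangle$, preserving $P'_0$ and $V_0$, and keeping the $A$-component equal to $J$ — while still having enough room (supplemented by replacing the chosen generator $z$ by a power or by a $P'_0$-multiple) to eliminate the parameters $i$ and $w_z$; here the fact that $Z$ has order $p^2$ with $Z^p=Y_{e_1}$ has to be handled consistently. Setting up the identification of $Q\backslash N$ with $V$ and computing the induced affine action of $\Hol(N)$, which is what makes the regularity hypothesis usable, is the other point needing care.
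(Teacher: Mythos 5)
Your argument is correct and follows essentially the same route as the paper's proof: determine the normaliser of $\langle X\rangle$ in $\Hol(N)$ (yielding the constraint $v=-M^k w$ on the translation part), show the index-$p$ subgroup of the Sylow $p$-subgroup consists of the translations $Y_w$ over a hyperplane, reduce the linear part of the extra generator to $J$ by conjugating by a power of $T$, identify the hyperplane with $V_0$ as the unique $J$-invariant hyperplane, and finally normalise the translation part to $e_p$ via multiplication by $Y_u$ and conjugation by a scalar matrix. The differences (your explicit $W(k,A,w)$ coordinates and multiplication rule, the orbit-space formulation of regularity, and passing to $z^l$ rather than choosing $Z$ with $ZXZ^{-1}=X^p$ from the outset) are organisational rather than substantive.
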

\begin{proof}
We take 
$$ X = \left[  \left( \begin{array}{c|c} M  & 0 \\ \hline 0 & 1 \end{array}
\right) , \conj \left( \begin{array}{c|c} I &  0 \\ \hline  0 & 1 \end{array} \right) \right] $$
and look for elements of $p$-power order normalising $\langle X \rangle$. 
If
$$ U = \left[  \left( \begin{array}{c|c} M^k  & v \\ \hline 0 & 1 \end{array} \right) , \conj \left( \begin{array}{c|c} A &  w \\ \hline  0 & 1 \end{array} \right) \right] $$ 
is such an element, with $UXU^{-1}=X^s$, then expanding $UX=X^s U$ we find
$$ AMA^{-1} = M^s, \qquad v +M^k w - M^k A M A^{-1} w = M^s v. $$
We conclude firstly by Lemma \ref{centr-norm}(vii) that $s=p^j$ for some $j$, so that conjugation by $U$ gives an automorphism of $\langle X \rangle$ of order $1$ or $p$, and secondly that 
$$ (I-M^s)v = - (I - M^s) M^k w. $$
Since $I-M^s$ is invertible by Lemma \ref{centr-norm}(ii), we have 
$$ v= - M^k w. $$

Recall that are looking for a regular subgroup of $\Hol(N)$ with the structure $Q \rtimes P$, where $Q=\langle X \rangle$ and the group $P$ of order $p^p$ acts nontrivially on $Q$. Thus the kernel $P_0$ of the action of $P$ on $Q$ has order $p^{p-1}$ and consists of elements $U \in \Hol(N)$ as just described which commute with $X$. The group $P$ itself will be generated by $P_0$ and one further element $Z$ with $ZXZ^{-1}=X^p$. 

If $U \in P_0$ then $A$ commutes with $M$ and has $p$-power order. By Lemma \ref{centr-norm}(iv), we then have $A=I$. For $U$ to have order a power of $p$, we then require $k=0$. Hence $U$ is of the form
$$ Y_v = \left[  \left( \begin{array}{c|c} I  & v \\ \hline 0 & 1 \end{array} \right) , \conj \left( \begin{array}{c|c} I &  -v \\ \hline  0 & 1 \end{array} \right) \right] $$  
for some $v \in V$. For elements of this form, we have $Y_v Y_w = Y_{v+w}$. Let 
$$ V' = \{ u \in V: Y_u \in P_0 \}. $$
Then $V'$ is a $(p-1)$-dimensional $\F_p$-subspace  of $V$, and the function $u \mapsto Y_u$ is an isomorphism between $V'$ and $P_0$ as abelian groups. In particular, $P_0$ is an elementary abelian group of order $p^{p-1}$.

For the remaining generator $Z$, we require that $ZX=X^p Z$, $Z^p \in P_0$, and $Z$ normalises $P_0$. Multiplying $Z$ by a power of $X$, we may assume that
$$ Z = \left[  \left( \begin{array}{c|c} I  & v \\ \hline 0 & 1 \end{array} \right) , \conj \left( \begin{array}{c|c} A &  -v \\ \hline  0 & 1 \end{array} \right) \right] $$ 
for some $v \in V$, where $AMA^{-1}=M^p$. Then $v \not \in V'$ since our group must be transitive on $N$. Also, $A$ must have order $p$, so that $A=T^{(p-1)h} J$ for some $h$ by Lemma \ref{centr-norm}(vii). Since $JTJ^{-1}=T^p$, we have $T^h A T^{-h}=J$, and conjugation by the automorphism
$$ \left( \begin{array}{c|c} T^h &  0 \\ \hline  0 & 1 \end{array} \right) $$  
of $N$ replaces $A$ by $J$, $v$ by $T^h v$, and $V'$ by $T^h V'$. We may therefore assume that $A=J$. We then
have $ZY_u = Y_{Ju} Z$, so $V'$ must be the unique subspace of $V$ of dimension $p-1$ which is invariant under $J$, namely $V_0$ (c.f.~Notation \ref{V0}). Replacing $Z$ by $Y_u Z$ for a suitable $u \in V_0$, we can suppose that $v= \lambda e_p$ for some $\lambda \in \F_p^\times$. After a final conjugation by 
$$	 \left( \begin{array}{c|c} \lambda^{-1} I &  0 \\ \hline  0 & 1 \end{array} \right), $$
we have $v=e_p$. Then $X$, $Y_u$ and $Z$ are the same elements as in (\ref{def-G}) and we obtain the simple skew brace $B$ of Theorem \ref{simple}.
\end{proof}

\begin{lemma}  \label{type-iii}
For $X$ as in Lemma \ref{q-semireg}(iii), there are no regular subgroups $G$ in $\Hol(N)$ containing $X$.
\end{lemma}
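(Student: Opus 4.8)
As explained in the discussion preceding Lemma \ref{type-i}, we look for a regular subgroup $G\le\Hol(N)$ of order $n=p^pq$ in which $Q=\langle X\rangle$ is a normal subgroup, and the plan is to show that there is none. Let $P$ be a Sylow $p$-subgroup of $G$, so $|P|=p^p$. Since $Q\lhd G$, conjugation gives a homomorphism $P\to\Aut(Q)\cong C_{q-1}$, whose image has order dividing both $p^p$ and $q-1$. As $q$ divides $(p^p-1)/(p-1)$ we have $q-1<p^p$, so this image is a proper quotient of $P$ and the kernel $P\cap C_G(X)$ is nontrivial. Fix $U\in P\cap C_G(X)$ with $U\neq1$: then $U$ has $p$-power order and commutes with $X$. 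The whole argument consists in showing that no such $U$ exists.

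Write $U=[R_U,\conj(S_U)]$ with $R_U=\left(\begin{smallmatrix}M^{k'}&v'\\0&1\end{smallmatrix}\right)$ and, by Lemma \ref{Aut-N}, $S_U=\left(\begin{smallmatrix}A&w'\\0&1\end{smallmatrix}\right)$ with $A$ normalising $\langle M\rangle$; recall $X=[R_X,\conj(S_X)]$ with $S_X=\left(\begin{smallmatrix}M&0\\0&1\end{smallmatrix}\right)$. Comparing the $\Aut(N)$-components in $UX=XU$ gives $\conj(S_US_X)=\conj(S_XS_U)$; as $S_US_X$ and $S_XS_U$ are block upper triangular with lower-right entry $1$, and such a matrix is determined by the automorphism of $N$ it induces (one recovers the translation part using that $I-M$ is invertible, Lemma \ref{centr-norm}(ii)), we get $S_US_X=S_XS_U$, that is, $AM=MA$ and $(I-M)w'=0$. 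Hence $w'=0$, and $A\in\F_p[M]^\times$, which by Lemma \ref{centr-norm}(iv) has order $p^p-1$, prime to $p$; since $U$, hence $\conj(S_U)$, hence the map $v\mapsto Av$ by which it acts on $V\le N$, has $p$-power order, we conclude $A=I$. Then $U=[R_U,\id]$ has order equal to the order of $R_U$ in $N$; this being a power of $p$ forces $M^{k'}=I$, so, choosing $0\le k'<q$, we get $U=\bigl[\left(\begin{smallmatrix}I&v'\\0&1\end{smallmatrix}\right),\id\bigr]$.

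The last step is to use $UX=XU$ again, now comparing first components: a direct computation gives $\left(\begin{smallmatrix}M^k&v+v'\\0&1\end{smallmatrix}\right)$ for $UX$ and $\left(\begin{smallmatrix}M^k&v+M^{k+1}v'\\0&1\end{smallmatrix}\right)$ for $XU$, whence $(M^{k+1}-I)v'=0$. Here the hypothesis $1\le k\le q-2$ is used decisively: it guarantees $k+1\not\equiv0\pmod q$, so $M^{k+1}$ has order $q$, and Lemma \ref{centr-norm}(ii) applied to $M^{k+1}$ shows $M^{k+1}-I$ is invertible. Therefore $v'=0$, so $U=1$, a contradiction, and the lemma follows. (The argument genuinely degenerates for type (i), where $k=q-1$ and $M^{k+1}=I$, so that the final obstruction $M^{k+1}-I$ vanishes; this is consistent with a regular subgroup existing there, as Lemma \ref{type-i} shows.)

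I expect the only real difficulty to be the bookkeeping inside $\Hol(N)$ — tracking which of the relations extracted from $UX=XU$ forces $w'=0$, which forces $A=I$, and which forces $k'=0$, before the final first-component comparison delivers the contradiction. Both numerical ingredients, $q-1<p^p$ (so that $C_G(X)$ has nontrivial $p$-part) and the invertibility of $M^{k+1}-I$ for $1\le k\le q-2$, are immediate, the latter straight from Lemma \ref{centr-norm}(ii).
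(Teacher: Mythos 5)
Your proof is correct and follows essentially the same route as the paper's: both reduce to an element $U$ of $p$-power order commuting with $X$, force $w'=0$, $A=I$ and $k'=0$ from the second components and order considerations, and then extract $(M^{k+1}-I)v'=0$ from the first components, where $1\le k\le q-2$ makes $M^{k+1}-I$ invertible. The only cosmetic difference is that you produce a single nontrivial element of $P\cap C_G(X)$ by a counting argument in $\Aut(Q)$, whereas the paper exhibits a whole subgroup of order $p^{p-1}$ of such elements before reaching the same contradiction.
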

\begin{proof}
Let
$$ X = \left[  \left( \begin{array}{c|c} M^k & v \\ \hline 0 & 1 \end{array} \right) , \conj \left( \begin{array}{c|c} M &  0 \\ \hline  0 & 1 \end{array} \right) \right] $$ 
with $1 \leq k \leq q-2$ and $v$ arbitrary. Then
$$ X^s = \left[  \left( \begin{array}{c|c} M^{ks} & (M^{k+1})^{[s]} v \\ \hline 0 & 1 \end{array} \right) , \conj \left( \begin{array}{c|c} M^s &  0 \\ \hline  0 & 1 \end{array} \right) \right]. $$ 
Thus if
$$ U = \left[  \left( \begin{array}{c|c} M^t  & u \\ \hline 0 & 1 \end{array} \right) , \conj \left( \begin{array}{c|c} A &  w \\ \hline  0 & 1 \end{array} \right) \right] $$ 
satisfies $UX=X^s U$ then we have from the second components that 
$$ AM=M^s A , \qquad w = M^s w. $$
By Lemma \ref{centr-norm}(vii), $s$ is a power of $p$, and by Lemma \ref{centr-norm}(ii), $w=0$. Moreover, the automorphism obtained by conjugation by $U$ has order $1$ or $p$, so there must be a subgroup of order $p^{p-1}$ consisting of such elements $U$ for which $s=1$. 

As before, if $s=1$ and $A$ has $p$-power order then $A=I$ and we obtain 
$$ U = \left[  \left( \begin{array}{c|c} M^t  & u \\ \hline 0 & 1 \end{array} \right) , \conj \left( \begin{array}{c|c} I &  0 \\ \hline  0 & 1 \end{array} \right) \right]. $$ 
This element cannot have order a power of $p$ unless $t=0$. Thus there is a subspace $V'$ of $V$ of dimension $p-1$ such that $G$ contains the elements
$$ Y_u = \left[  \left( \begin{array}{c|c} I  & u \\ \hline 0 & 1 \end{array} \right) , \conj \left( \begin{array}{c|c} I &  0 \\ \hline  0 & 1 \end{array} \right) \right], \qquad u \in V'. $$ 

From the first component of the relation $Y_u X=X Y_u$, we then obtain
$$ u + v = v +M^{k+1}u, $$
As $k \neq q-2$, the matrix $M^{k+1}-I$ is invertible, so $u=0$. Thus $V'=0$. This is a contradiction since $V'$ has dimension $p-1$. Hence there is no simple skew brace with $X$ of type (iii) in Lemma \ref{q-semireg}.
\end{proof}

Theorem \ref{uniqueness} now follows immediately from Lemmas \ref{q-semireg}, \ref{type-i} and \ref{type-iii}.

\bibliography{SimpleSkewBracesBib}

\end{document}